\title{A monoidal algebraic model for rational $SO(2)$-spectra}
\author{David Barnes}
\begin{document}
\maketitle
\normalem
\begin{abstract}
\noindent
The category of rational $SO(2)$--equivariant spectra
admits an algebraic model. That is, there is an
abelian category $\acal(\torus)$ whose derived 
category is equivalent to the homotopy
category of rational $SO(2)$--equivariant spectra.
An important question is: does this algebraic model capture the
smash product of spectra?

The category $\acal(\torus)$ is known as Greenlees'
standard model, it is an abelian category
that has no projective objects and is constructed from
modules over a non--Noetherian ring.
Hence the standard techniques for constructing a
monoidal model structure cannot be applied.
In this paper we construct a monoidal model structure on
$\acal(\torus)$ and show that the derived product on the homotopy
category is compatible with the smash product of spectra.
The method used is related to techniques developed
by the author in earlier joint work with Roitzheim. That work
constructed a monoidal model structure on Franke's
exotic model for the $K_{(p)}$--local stable homotopy category.

We also provide a monoidal Quillen equivalence
to a simpler monoidal model category $R_\bullet \leftmod$ that has
explicit generating sets. Having monoidal model structures
on $\acal(\torus)$ and $R_\bullet \leftmod$
removes a serious obstruction to constructing a series
of monoidal Quillen equivalences between the algebraic model
and rational $SO(2)$--equivariant spectra.
\end{abstract}

\tableofcontents

\section{Introduction}

A particularly useful technique in algebraic topology has been to
construct algebraic models for stable homotopy categories.
The first example is that the homotopy category of rational spectra is
equivalent to the category of graded rational vector spaces.
More interesting examples include the work of Franke modelling
the $K_{(p)}$--local stable homotopy category  (see Roitzheim \cite{Roi08}),
work of Bousfield \cite{bous90} on $K$-local spectra
and work of Greenlees and others in the case of rational $G$-spectra:
\cite{gre99}, \cite{greo2}, \cite{greshi}
or \cite{barnesfinite}. In particular the work on rational $G$-spectra
provides classifications of rational $G$-equivariant cohomology
theories in terms of the algebraic models.

An important (and difficult) question is whether these algebraic models capture
the monoidal products. That is,
does the derived smash product in the topological setting correspond
to a derived tensor product coming from the algebraic model?
This is true in the case of rational spectra: Shipley \cite{shiHZ}
shows that commutative $\h \qq$-algebras are rational commutative
differential graded algebras.
Conversely, the author and Roitzheim \cite{brfranke} show that this is false in the
case of Franke's exotic model, even though the algebraic model
does capture the Picard group.
In this paper we focus on algebraic models for rational $G$-spectra
where the group of equivariance is $SO(2)$.

The homotopy category of rational $\torus$--equivariant spectra
is equivalent to (the derived category of) an abelian category $\acal(\torus)$
known as Greenlees's standard model.
This algebraic model is quite straightforward,
we may (very roughly) describe the objects as morphisms
of $R$-modules $\beta \co N \to R[S^{-1}] \otimes U$, where
$R$ is a commutative ring, $U$ is a $\qq$-module
and $\beta$ is an isomorphism after inverting the set $S \subset R$.
For full details see Definition \ref{def:algmodel}.
It is easy to construct objects
in $\acal(\torus)$ and calculate maps between them.
However this category exhibits some curious behaviours:
it has no projectives, limits are
complicated to construct and most functors in to the category are right adjoints.
In particular the obvious evaluation functors
(which send an object $\beta \co N \to R[S^{-1}] \otimes U$ of $\acal(\torus)$
to either the $R$-module $N$ or the $\qq$-module $U$) are left adjoints,
as is discussed at the end of Section \ref{sec:monoidal}.
Furthermore, the ring $R$ is not Noetherian and
the condition that $\beta$ be an isomorphism after inverting $S$
makes it hard to relate $\acal(\torus)$ to the category of $R$-modules.
These problems make it very difficult to construct a derived monoidal product
or a monoidal model structure where the weak equivalences
are the homology isomorphisms.
A model structure for $\acal(\torus)$ is given in \cite{gre99}. However, it is known
that this model structure cannot be monoidal, leaving
the important question of monoidality open.

In this paper we apply the methods
of Barnes and Roitzheim \cite{brfranke}
to resolve this problem and give a monoidal model
structure for $\acal(\torus)$.
By extensively studying the dualisable
objects of the category $\acal(\torus)$ we show that they can be used
to construct a new monoidal model structure.
Furthermore, the weak equivalences of this new model
structure are the homology isomorphisms,
see Theorems \ref{thm:modelstructure} and \ref{thm:monoidal}.
This model structure is Quillen
equivalent to that of Greenlees,
hence we have the correct homotopy category.
Furthermore, the induced derived monoidal product on the homotopy
category is the correct one, in the sense that it is compatible with the
short exact sequence of Theorem \ref{thm:greenleesmonoidal}.

While one could try to use the flat objects to make a
monoidal model structure, these are harder
to identify, as the ring $R$ is poorly behaved.
We also place a monoidal model structure on a larger category
$\hat{\acal}(\torus)$, whose objects are morphisms
of $R$-modules $\beta \co N \to R[S^{-1}] \otimes U$, where
$R$ is a commutative ring, $U$ is a $\qq$-module and there is no
isomorphism condition on $\beta$.  In this category there
aren't enough dualisable objects,  so instead
we use a set of flat objects
that one cannot construct in $\acal(\torus)$,
see Remark \ref{rmk:flatmodel}.

We end the paper by producing a symmetric monoidal
Quillen equivalence between $\acal(\torus)$
and a simpler  (but much larger) category
$R_\bullet \leftmod$, see Theorem \ref{thm:monoidalequivalence}.
While the weak equivalences of the
model structure we put on this larger category
are more complicated, we can give explicit
generating sets for the model structure.
It is easier still to construct objects in $R_\bullet \leftmod$
as there are are plenty of left adjoints into the category
and limits are much simpler to construct.
Moreover, $R_\bullet \leftmod$ appears
in the preprint of Greenlees and Shipley \cite{greshi}
as part of a series of Quillen equivalences between
rational $\torus$--equivariant spectra
and $\acal(\torus)$.
Hence this paper fixes the primary obstruction
to constructing a series of \emph{monoidal}
Quillen equivalences between
rational $\torus$--equivariant spectra
and $\acal(\torus)$. Such a series of Quillen equivalences
would provide a classification of ring spectra (and
modules over them) in terms of ring objects in
$\acal(\torus)$ (and modules over them). In terms
of cohomology theories, this would provide a classification
of rational $SO(2)$--equivariant cohomology theories with a
cup product.
As a further application, we
include a conjecture about extending the results
of this paper to the case of the $r$-fold product of copies of $\torus$.

\paragraph{Organisation}

In the first half we introduce the algebraic model
$\acal(\torus)$ and its properties.
Section \ref{sec:the model} has the formal definition of
$\acal(\torus)$ from \cite{gre99}.
In Section \ref{sec:limits} we define limits in $\acal(\torus)$,
introduce the larger category $R_\bullet \leftmod$
and give the adjunction between $R_\bullet \leftmod$ and $\acal(\torus)$.
We then use this adjunction in Section \ref{sec:monoidal}
to show that $\acal(\torus)$
has a closed symmetric monoidal product.

In the second half of the paper we construct the monoidal model structure.
to do so, we need a class of objects which behave well with respect to the
monoidal product. This class is introduced and
studied in Section \ref{sec:duals}.
We construct a monoidal model structure on $\acal(\torus)$ in
Section \ref{sec:dualisable} and show
that it is Quillen equivalent to
the original model structure of Greenlees
and has the correct monoidal behaviour.
Finally in Section \ref{sec:quillen}
we give the monoidal Quillen equivalence between
$\acal(\torus)$ and an explicitly defined model structure on
$R_\bullet \leftmod$.

\paragraph{Acknowledgements}
The author would like to thank Rosona Eldred, John Greenlees,
Magdalena Kedziorek and Constanze Roitzheim
for numerous helpful discussions and suggestions.
The author
gratefully acknowledges support from EPSRC grant EP/H026681/2.

\section{The model \texorpdfstring{$\acal(\torus)$}{A(T)}}\label{sec:the model}
In this section we introduce Greenlees' standard model $\acal(\torus)$.
This is the algebraic model for rational $SO(2)$--equivariant spectra.
We explain how to turn this into a differential graded
category and define the
injective model structure. The material of this section is taken from
\cite{gre99}.
We begin with the ring $\ocal_\fcal$ and the set of
``Euler classes''. The category $\acal(\torus)$ will be built from these
constructions.

\begin{definition}
Let $\fcal$ be the set of finite subgroups of $\torus$
(the cyclic groups $C_n$ for $n \geqslant 1$).
Let $\ocal_\fcal$ be the \emph{graded} ring $\prod_{H \in \fcal} \qq [c_H]$
(a countably infinite product of polynomial rings on one generator)
with $c_H$ of degree $-2$.

Define $e_H \in \ocal_\fcal$ to be the idempotent arising from
projection onto factor $H$.
In general, if $\phi$ is a subset of $\fcal$ we
define $e_\phi$ to be the idempotent
coming from projection onto the factors in $\phi$.
We let $c$ be the unique element of $\ocal_\fcal$ which in factor
$H$ is $c_H$. We can then write $c_H = e_H c$.
\end{definition}

\begin{definition}
Let $\nu \co \fcal \to \zz_{\geqslant 0}$ be a function with finite support,
Let $n$ be the maximum value of $\nu$ and partition
$\fcal$ into $n+1$ sets $\fcal_0$, $\fcal_1$, \dots $\fcal_n$,
where $H$ is in $\fcal_i$ if and only if $\nu(H) = i$.
If $N$ is an $\ocal_\fcal$--module, then we define
$\ocal_\fcal$--modules
\[
\Sigma^\nu N = \oplus_{i=0}^n \Sigma^{2i} e_{\fcal_i} N
\quad \textrm{and} \quad
\Sigma^{-\nu} N = \oplus_{i=0}^n \Sigma^{-2i} e_{\fcal_i} N.
\]
Furthermore we have a map of $\ocal_\fcal$--modules
\[
c^\nu \co N \to \Sigma^{\nu} N.
\]
We define this map differently on each of the subdivisions of
$\fcal$. On part $\fcal_i$ we use the map
$c^i \co e_{\fcal_i} N \to \Sigma^{2i} e_{\fcal_i} N$.
\end{definition}

We note that $c^\nu$ is not an element of $\ocal_\fcal$ because it does not
have the same degree in each factor $\qq[c_H]$. For the sake of expediency,
we shall often ignore this and pretend that $c^\nu$ is in $\ocal_\fcal$.
Hence $c^\nu x$ is shorthand for the image of $x$ in $\Sigma^{\nu} N$
under the map $c^\nu$.

\begin{definition}\label{def:euler}
For a function $\nu \co \fcal \to \zz_{\geqslant 0}$ with finite support,
define $c^\nu \in \ocal_\fcal$ to be the (inhomogeneous) element
satisfying $e_H c^\nu = c_H^{\nu(H)}$.
We define
\[
\ecal = \{ c^\nu \ | \ \nu \co \fcal \to \zz_{\geqslant 0} \textrm{ with finite support}  \}
\]
and call the elements of $\ecal$ \textbf{Euler classes}.
\end{definition}

\begin{ex}
The standard example of an element of
$\ecal$ is given by the dimension function
of a complex representation $V$ of $\torus$ with $V^\torus=0$.
This function sends $H \in \fcal$ to
to the dimension of $V^H$ over $\cc$. We call this element $c^V$.
Note that $c^V c^{V'} = c^{V \oplus V'}$.
\end{ex}

For more details on Euler classes and representations,
see \cite{gre99}[Section 4.6].

\begin{definition}
Define a partial ordering on the functions
$\fcal \to \zz_{\geqslant 0}$ with finite support
by $\nu \geqslant \nu'$
if $\nu(H) \geqslant \nu'(H)$ for each $H \in \fcal$.

For $N$ an $\ocal_\fcal$--module define
$\ecal^{-1} N$ as the colimit of terms
$\Sigma^{\nu} N$ as $\nu$ runs over the
partially ordered set of functions
$\fcal \to \zz_{\geqslant 0}$ with finite support
with $\nu \leqslant \nu + \nu'$
corresponding to
$c^{\nu'} \co \Sigma^{\nu} N \to \Sigma^{(\nu + \nu')} N$.
\[
\ecal^{-1} N
= \colim_{\nu} \Sigma^{\nu} N
\]
\end{definition}
It follows that $c^{\nu} \co \ecal^{-1} N \to \Sigma^{\nu} \ecal^{-1} N$ is an
isomorphism with inverse $c^{-\nu}$. It is easily seen that
$\ecal^{-1} \ocal_\fcal$ is a ring.
To illustrate its structure, we see that
as a vector space, $(\ecal^{-1} \ocal_\fcal)_{2n}$ is
$\prod_{H \in \fcal} \qq $
for $n \leqslant 0$ and is $\oplus_{H \in \fcal} \qq$ for
$n >0$. We also see that there is a natural isomorphism
\[\ecal^{-1} \ocal_\fcal \otimes_{\ocal_\fcal} N
\cong
\ecal^{-1} N.
\]

\begin{definition}
We say that an $\ocal_\fcal$--module $N$
has no \textbf{$\ecal$--torsion} if the map
$N \to \ecal^{-1} N$ is injective.
\end{definition}

We note here that a flat $\ocal_\fcal$-module has no $\ecal$--torsion:
$\ocal_\fcal \to \ecal^{-1} \ocal_\fcal$ is injective, and hence remains
so after tensoring with the flat module.

\begin{definition}\label{def:algmodel}
We define the category $\acal=\acal(\torus)$ as follows.
Its class of objects is the
collection of
$\ocal_\fcal$--module maps
\[
\beta \co N \to \ecal^{-1} \ocal_\fcal \otimes U
\]
with $N$ an $\ocal_\fcal$--module
and $U$ a graded rational vector space,
such that $\ecal^{-1} \beta$ be an isomorphism.
The $\ocal_\fcal$--module $N$ is called the \textbf{nub} and
$U$ is called the \textbf{vertex}.

A map $(\theta, \phi)$ in $\acal$ is a commutative square
as below, where $\theta$ is a map of $\ocal_\fcal$--modules and
$\phi$ is a map of graded rational vector spaces.
\[
\xymatrix@C+1cm{
N \ar[r]^(0.4)\beta \ar[d]_\theta &
\ecal^{-1} \ocal_\fcal \otimes U \ar[d]^{\id \otimes \phi} \\
N' \ar[r]^(0.4){\beta'} &
\ecal^{-1} \ocal_\fcal \otimes U'
}\]
\end{definition}

The relation between this category and rational $\torus$--equivariant
stable homotopy theory is given by the following pair of theorems from \cite{gre99}.
We leave the definition of rational $\torus$--equivariant spectra to the reference.

\begin{theorem}[Greenlees]
The category of rational $\torus$--equivariant spectra up to homotopy
is equivalent to the derived category of $\acal$.
\end{theorem}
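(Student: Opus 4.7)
The plan is to identify the topological data that $\acal$ is designed to mirror, and then match resolutions on the two sides. First I would fix the key topological decomposition: for rational $\torus$-spectra, the family $\fcal$ of finite subgroups and its complement $\{\torus\}$ control all isotropy information. The isotropy separation cofibre sequence
\[
E\fcal_+ \longrightarrow S^0 \longrightarrow \widetilde{E\fcal}
\]
smashed with a rational $\torus$-spectrum $X$ decomposes $X$ into an $\ocal_\fcal$-module (recording the $\torus$-fixed-point behaviour of $X \wedge E\fcal_+$, on which the Euler classes $c^V$ act in the natural way) and a graded $\qq$-vector space (the rationalised geometric $\torus$-fixed points), together with a canonical comparison between them.

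Next I would define the candidate functor $\pi_*^{\acal} \co X \mapsto (\beta_X \co N_X \to \ecal^{-1}\ocal_\fcal \otimes U_X)$, where $N_X = \pi_*^{\torus}(X \wedge E\fcal_+)$ carries the $\ocal_\fcal$-module structure via the Euler classes of Definition \ref{def:euler}, $U_X$ is the graded rational vector space underlying $\Phi^\torus X$, and $\beta_X$ is the map induced by the unit $E\fcal_+ \to S^0$ followed by the inclusion into the $\widetilde{E\fcal}$-localisation. The identification of $\ecal$ with the Euler classes of $\torus$-representations $V$ satisfying $V^\torus = 0$, as in the example after Definition \ref{def:euler}, is exactly what makes $\ecal^{-1} \beta_X$ an isomorphism: once the Euler classes are inverted both sides record the geometric $\torus$-fixed points, which is the rational tom Dieck splitting in this setting. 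So $\pi_*^\acal$ lands in $\acal$, and is exact in the triangulated sense.

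The substantive step is then to upgrade $\pi_*^\acal$ to an equivalence with the derived category of $\acal$. The standard route is an Adams-type spectral sequence
\[
E_2^{s,t} = \mathrm{Ext}^{s,t}_{\acal}\bigl(\pi_*^\acal X,\, \pi_*^\acal Y\bigr) \Longrightarrow [X,Y]^{t-s}_{\torus,\qq},
\]
obtained by resolving $Y$ by $\torus$-spectra whose homotopy realises injective objects of $\acal$. Two facts would make this work: the injective dimension of $\acal$ is small (at most one, by direct inspection of the definition, since an object of $\acal$ is essentially a two-term diagram), so the spectral sequence collapses at $E_2$; and $\acal$ has enough topologically realisable injectives, built from Eilenberg-Mac Lane-type $\torus$-spectra associated to the cyclic subgroups and to $\torus$ itself. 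Collapse plus realisability of finite injective resolutions then deliver both fully faithfulness (from $E_2^{0,*}$) and essential surjectivity.

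The hard part will be producing enough topologically realisable injectives and verifying convergence of the Adams spectral sequence. This is where genuinely topological input is required: the injectives must be constructed from rational $\torus$-equivariant Eilenberg-Mac Lane spectra indexed on the cyclic subgroups, and their homotopy computed. Everything else, namely setting up $\pi_*^\acal$ and checking that its image really sits inside $\acal$, is formal once the isotropy separation decomposition and the identification of $\widetilde{E\fcal}$-localisation with $\ecal$-inversion are in hand. This substantive technical work is precisely the content of \cite{gre99} and goes well beyond what the excerpt supplies, which is why I would here invoke it as a black box and concentrate in the remainder of the paper on the monoidal refinement.
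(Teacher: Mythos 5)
The paper itself gives no proof of this statement: it is a cited theorem of Greenlees from \cite{gre99}, and your concluding decision to ``invoke it as a black box'' matches exactly what the paper does. Your expository sketch of what lies inside that black box---isotropy separation, an algebraic invariant $\pi_*^\acal$ valued in $\acal$, and an Adams spectral sequence over $\acal$ that collapses because $\acal$ has injective dimension one---is a fair description of Greenlees's strategy, and the low injective dimension is indeed why Theorem \ref{thm:torusadams} in the paper appears as a short exact sequence rather than a spectral sequence.

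There is one concrete slip in your sketch. You set $N_X = \pi_*^\torus(X \wedge E\fcal_+)$, but the paper's definition of $\pi_*^\acal$ (recorded immediately after the theorem in question) takes the nub to be $\pi_*^\torus(X \wedge DE\fcal_+) \otimes \qq$, the Spanier--Whitehead (functional) dual of $E\fcal_+$, not $E\fcal_+$ itself. This is not a cosmetic difference: inverting the Euler classes $\ecal$ corresponds topologically to smashing with $\widetilde{E\fcal}$, and since $E\fcal_+ \wedge \widetilde{E\fcal}$ is contractible one gets $\ecal^{-1}\pi_*^\torus(X \wedge E\fcal_+) = 0$ for every $X$. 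Hence with your formula $\ecal^{-1}\beta_X$ could only be an isomorphism when $U_X = 0$, and $\pi_*^\acal$ would not land in $\acal$ as specified in Definition \ref{def:algmodel}. Passing to $DE\fcal_+$ is precisely what produces an $\ecal$-torsion-free nub, and that torsion-freeness is load-bearing later in the present paper (for instance in Proposition \ref{prop:dualfingenproj}, where dualisable objects are characterised as those with finitely generated projective, hence torsion-free, nub). Since you are in any case deferring to \cite{gre99}, the slip is recoverable, but it is the one place where your outline diverges from what Greenlees actually constructs.
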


For a rational $\torus$--equivariant spectrum $X$, there is an object
$\pi^\acal_*(X)$ of $\acal$. It is constructed in terms of rational equivariant homotopy
groups. We give the definition below, but leave explanations to the reference.
\[
\pi^\acal_*(X) = \Big(
\pi_*^{\torus} (X \smashprod D E \fcal_+) \otimes \qq
\longrightarrow
\ecal^{-1} \ocal_\fcal \otimes (\pi_*(\Phi^{\torus} X) \otimes \qq)  \Big)
\]
There is also an Adams short exact sequence which explains
how to calculate maps in the homotopy category of
rational $\torus$--equivariant spectra.

\begin{theorem}[Greenlees]\label{thm:torusadams}
Let $X$ and $Y$ be $\torus$--equivariant spectra. Then
the sequence below is exact.
\[
0 \to
\ext_\acal (\pi_*^\acal(\Sigma X),\pi_*^\acal( Y))
\to
[X,Y]^{\torus}_* \otimes \qq
\to
\hom_\acal (\pi_*^\acal(X),\pi_*^\acal(Y))
\to 0
\]
\end{theorem}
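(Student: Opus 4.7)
The plan is to deduce the short exact sequence as an algebraic consequence of the equivalence of categories stated just above. First I would use that equivalence to translate the left-hand side: $[X,Y]^\torus_* \otimes \qq$ becomes the graded morphism group $\hom^*_{D(\acal)}(\pi^\acal_*(X), \pi^\acal_*(Y))$ in the derived category of $\acal$. So the theorem reduces to a purely algebraic universal-coefficient statement for two objects of $\acal$, each viewed in $D(\acal)$ concentrated in degree zero.

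The key homological input is that $\acal$ should have injective dimension exactly $1$. Granted this, any object $M \in \acal$ admits an injective resolution of length one, $0 \to M \to I^0 \to I^1 \to 0$. Applying $\hom_\acal(\pi^\acal_*(X), -)$ to this resolution and taking cohomology yields a long exact sequence that collapses, by dimension reasons, into a short exact sequence
\[
0 \to \ext^1_\acal (\pi^\acal_*(\Sigma X), M) \to \hom_{D(\acal)} (\pi^\acal_*(X), M) \to \hom_\acal (\pi^\acal_*(X), M) \to 0,
\]
after identifying the internal degree shift produced by the resolution with $\pi^\acal_*$ of the suspension $\Sigma X$. Setting $M = \pi^\acal_*(Y)$ then produces exactly the claimed sequence.

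The main obstacle is establishing this homological dimension bound for $\acal$. Since the category has no projectives (as noted in the introduction) I would attack it via explicit injective resolutions, built out of the two ``coordinates'' of an object $\beta \co N \to \ecal^{-1}\ocal_\fcal \otimes U$. The vertex $U$ is a graded $\qq$-vector space and hence automatically injective, so the difficulty concentrates on the nub side. Over each polynomial factor $\qq[c_H]$, which is Noetherian of global dimension $1$, every module admits a short injective resolution. I would try to assemble these factorwise resolutions into a coherent two-step injective resolution of the whole object of $\acal$, the essential compatibility being witnessed by the requirement that $\beta$ becomes an isomorphism after inverting $\ecal$. Ruling out $\ext^2$ would then reduce to the corresponding vanishing over $\ocal_\fcal$ and over $\qq$ separately, both of which are immediate.
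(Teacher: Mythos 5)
The paper does not prove this theorem; it cites it directly from Greenlees's memoir \cite{gre99}. Your high-level strategy — translate through the equivalence of categories and invoke a universal-coefficient short exact sequence coming from the fact that $\acal$ has injective dimension one — is exactly the route Greenlees takes, so you have identified the right skeleton. But both of the load-bearing steps in your sketch have real gaps.

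First, the reduction ``to two objects of $\acal$ viewed in $D(\acal)$ concentrated in degree zero'' is not automatic. The equivalence sends $X$ to some chain complex $\Phi X$ in $D(\acal)$, and a priori you only know $H_*(\Phi X) \cong \pi^\acal_*(X)$; that $\Phi X$ is quasi-isomorphic to its homology (formality, or ``intrinsic formality'' of $D(\acal)$) is itself a consequence of the injective-dimension-one statement, so the logical order must be: prove the dimension bound, then either deduce formality or run the UCT argument directly against the chain complex $\Phi X$. As written your argument is circular in presentation, though fixable.

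Second, and more seriously, the proposed ``factorwise'' construction of a two-step injective resolution does not work. The ring $\ocal_\fcal = \prod_{H} \qq[c_H]$ is an infinite product, and $\ocal_\fcal$-modules do not decompose into collections of $\qq[c_H]$-modules — the Hausdorff condition constrains the nubs but does not split them. Moreover $\ocal_\fcal$ itself has infinite global dimension, so a naive reduction to ``global dimension one in each factor'' cannot supply the bound. The actual content of Greenlees's proof is a classification of the injective objects of $\acal$: they are the torsion injectives $e_H f_H(I)$ for $I$ a torsion injective $\qq[c_H]$-module, together with the ``vertex'' injectives $(\ecal^{-1}\ocal_\fcal\otimes V \to \ecal^{-1}\ocal_\fcal\otimes V)$. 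One then shows that the canonical map from any object $M$ into the corresponding sum of these two flavours of injective has injective cokernel, giving a functorial two-term resolution. That identification of the injectives, and the verification that the cokernel is injective, is the real work; it cannot be assembled mechanically from the $\qq[c_H]$ factors plus the condition that $\beta$ invert after $\ecal$-localisation.
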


In \cite{gre99} a model structure is given for the category of objects in
$\acal$ that have a differential. We define what it means to
have a differential and then introduce the model structure. We will leave the proof that
$\acal$ has all small limits and colimits to the next section.

If we think of $\ocal_\fcal$ as an object of $\ch(\qq)$ with trivial
differential, then we can consider the category of
$\ocal_\fcal$--modules in $\ch(\qq)$. Such an object $N$ is an
$\ocal_\fcal$--module in graded vector spaces along with maps
$d_n \co N_n \to N_{n-1}$. These maps
satisfy the relations below.
\[
d_{n-1} \circ d_n =0
\quad
c d_n = d_{n-2} c
\]

\begin{definition}
We define the category $d \acal$, to have objects the
collection of
$\ocal_\fcal$--module maps in in $\ch(\qq)$
\[
\beta \co N \to \ecal^{-1} \ocal_\fcal \otimes U
\]
where
$N$ is a rational chain complex with an action of $\ocal_\fcal$ and
$U$ is a rational chain complex.
Furthermore, we ask that $\ecal^{-1} \beta$ be an isomorphism.

A map $(\theta, \phi)$ in $d \acal$ is then a commutative
square as for $\acal$, such that
$\theta$ is a map in the category of
$\ocal_\fcal$--modules in $\ch(\qq)$
and $\phi$ is a map of $\ch(\qq)$.
\end{definition}

The following result is the subject of \cite[Appendix B]{gre99}.
Note that a map $(\theta, \phi)$ in $d \acal$ is a monomorphism
if and only if both $\theta$ and $\phi$ and injective maps.

\begin{proposition}[Greenlees]\label{prop:injmodelforA(T)}
The category $d \acal$ has a model structure with
cofibrations the monomorphisms and weak equivalences
the quasi--isomorphisms.
This is called the \textbf{injective model structure}.
We write $d \acal_{i}$
to denote this model structure.
Moreover, $\ho(d \acal_{i}) = \acal$.
\end{proposition}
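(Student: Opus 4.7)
The plan is to identify $d\acal$ with the category of chain complexes $\ch(\acal)$ in the abelian category $\acal$, and then invoke the general theorem (due to Beke, in Hovey's formulation) that every Grothendieck abelian category admits an injective model structure on its chain complex category, with cofibrations the monomorphisms and weak equivalences the quasi-isomorphisms.

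First I would verify that $\acal$ is Grothendieck abelian. Kernels, cokernels, and filtered colimits in $\acal$ can be computed componentwise on the nub and vertex, and the invertibility condition on $\ecal^{-1}\beta$ is preserved because $\ecal^{-1}$ is an exact functor (being a filtered colimit of shifts). A small generator can be assembled from a carefully chosen set of ``test objects'' built from finitely generated quotients of $\ocal_\fcal$ combined with small vertex data, their coproduct providing the required generator.

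Next I would identify $d\acal$ with $\ch(\acal)$. Given $\beta \co N_* \to \ecal^{-1}\ocal_\fcal \otimes U_*$ in $d\acal$, decomposing by degree yields objects $\beta_n \co N_n \to \ecal^{-1}\ocal_\fcal \otimes U_n$ of $\acal$ (the assumption that $\ecal^{-1}\beta$ is an isomorphism of chain complexes gives that each $\ecal^{-1}\beta_n$ is an isomorphism). The differentials of $N_*$ and $U_*$ then assemble into $\acal$-morphisms squaring to zero, and the construction reverses on morphisms, giving an isomorphism of categories.

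With these identifications Beke's theorem supplies the injective model structure on $d\acal$: the monomorphisms are closed under pushouts and transfinite compositions by abelianness together with AB5, and the small object argument, once a set of generating (acyclic) cofibrations has been fixed, provides the two factorizations. The identification of $\ho(d\acal_i)$ with (the derived category of) $\acal$ is then immediate from the definition of the derived category as the localisation at quasi-isomorphisms. The main obstacle throughout is the Grothendieck abelian verification: specifically, producing an explicit small generator despite the localisation condition in the definition of $\acal$, since the generator must have a bounded subobject lattice in order to cut the class of monomorphisms down to a \emph{set} of generating acyclic cofibrations for the small object argument.
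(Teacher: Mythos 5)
The paper gives no proof here — it simply cites \cite[Appendix B]{gre99} — so your proposal is an independent argument. It fails, however, at the central identification $d\acal \cong \ch(\acal)$. Objects of $\acal$ are already graded: the nub is a graded module over the graded ring $\ocal_\fcal$ (with $c$ in degree $-2$) and the vertex is a graded $\qq$-module. Passing to $d\acal$ does not introduce a new chain-complex grading; it equips the \emph{existing} grading with a degree $-1$ differential satisfying $cd = dc$. By contrast, $\ch(\acal)$ has bigraded nubs and vertices, with the differential acting only in the new homological direction. Your degree-wise decomposition makes the mismatch concrete: $N_n$ is just a $\qq$-vector space, not a graded $\ocal_\fcal$-module (multiplication by $c$ lands in $N_{n-2}$, outside of $N_n$), and $(\ecal^{-1}\ocal_\fcal \otimes U)_n$ equals $\oplus_p (\ecal^{-1}\ocal_\fcal)_p \otimes U_{n-p}$, not $\ecal^{-1}\ocal_\fcal \otimes U_n$. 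The degree-$n$ slices are therefore not objects of $\acal$, and Beke's theorem for chain complexes in a Grothendieck abelian category cannot be applied by this route.

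The surrounding machinery you invoke is not unreasonable: the paper itself applies Smith's/Beke's theorem in Proposition \ref{prop:smithmodel}, verifying there that $d\acal$ is locally presentable and that the quasi-isomorphisms satisfy the solution set condition. But that result produces cofibrantly generated model structures from a prescribed \emph{set} of cofibrations; for the injective model structure the cofibrations must be \emph{all} monomorphisms, so one must show directly that the class of monomorphisms of $d\acal$ (not of some $\ch(\mathcal{B})$) is cofibrantly generated and compatible with the quasi-isomorphisms — which is in effect what Greenlees's appendix does. Finally, the obstacle you flag (producing a generator despite the localisation condition on $\beta$) is comparatively mild: the wide spheres already furnish one, as the paper uses in its proof of local presentability.
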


As we shall see shortly, the category $\acal$ has a monoidal product
which induces a monoidal product on $d \acal$.
However the injective model structure does not make
$d \acal$ into a monoidal model category.
This failure occurs because in $d \acal$ the nubs can have $\ecal$--torsion.
This just analogous to how the injective model structure on
$\ch(\zz)$ is not monoidal due to torsion.
This is a serious defect, as we are unable to effectively compare
this monoidal product to the smash product of $\torus$--equivariant spectra.
This defect is further complicated by the lack of projective objects of $\acal$.
Our primary aim is to find a cofibrantly generated monoidal model structure
on $d \acal$ which is Quillen equivalent to the
injective model structure.

We note here that the authors of \cite{BHKKRS}
are working on a sequel investigating whether the
injective model structure on  $d \acal$  (and torsion $\ocal_\fcal$--modules)
are examples of model categories with Postnikov presentations.

\section{Limits}\label{sec:limits}

In this section we give the proof from
\cite{gre99} that
$\acal$ and $d \acal$ have all small limits and colimits
(a necessary condition for model categories).
Along the way we will need to relate $\acal$
to the larger categories $\hat{\acal}$ and $R_\bullet \leftmod$ of Greenlees
\cite{gre12standardalgebra}, we define these new categories below.
For the sake of exposition, we usually only refer to
categories with differentials: $d \acal$, $d \hat{\acal}$
and $d R_\bullet \leftmod$.
Analogues of the results also hold for
the categories without differentials.

The motivation for the definition below is that we want to consider
modules over a \emph{diagram} of (graded) rings. In our case we call the
diagram $R_\bullet$, and its maps are the inclusions:
\[
R_\bullet =(\ocal_\fcal \to \ecal^{-1} \ocal_\fcal \leftarrow \qq).
\]

\begin{definition}
We define $R_\bullet \leftmod$ to be the category of pairs of
morphisms (of $\ecal^{-1} \ocal_\fcal$--modules)
\[
\alpha \co \ecal^{-1} M \to N \quad \quad
\gamma \co \ecal^{-1} \ocal_\fcal \otimes_\qq U \to N
\]
where $M$ is an $\ocal_\fcal$--module,
$N$ is an $\ecal^{-1} \ocal_\fcal$--module and
$U$ is a (graded) $\qq$--module.
We write objects of this category as
quintuples
$(M, \alpha, N, \gamma, U)$.
A morphism of this category is a triple of maps
\[
(f,g,h) \co (M, \alpha, N, \gamma, U) \to (M', \alpha', N', \gamma', U')
\]
where $f \co M \to M'$ is a morphism of $\ocal_\fcal$--modules,
$g \co N \to N'$ is a morphism of $\ecal^{-1} \ocal_\fcal$--modules
and $h \co U \to U'$  is a morphism of $\qq$--modules, such that the following
diagram commutes.
\[
\xymatrix@C+1cm{
\ecal^{-1} M \ar[r]^-{\alpha} \ar[d]^-{\ecal^{-1}f} &
N \ar[d]^-{g} &
\ecal^{-1} \ocal_\fcal \otimes U
\ar[d]^-{1 \otimes h}
\ar[l]_-{\gamma} \\
\ecal^{-1} M' \ar[r]^-{\alpha'} &
N' &
\ecal^{-1} \ocal_\fcal \otimes U'
\ar[l]_-{\gamma'}
}
\]
We may also define $d R_\bullet \leftmod$ just as we defined $d \acal$.
\end{definition}

Observe that limits and colimits in $d R_\bullet \leftmod$
are defined objectwise. In order to define limits in $\acal$
we will construct them in $d R_\bullet \leftmod$
and then use an adjunction to move them to $d \acal$.

\begin{lemma}
There is an adjunction
\[
\inc : d \acal \adjunct
d R_\bullet \leftmod : \Gamma.
\]
where the left adjoint $\inc$ sends an object $(\beta \co N \to \ecal^{-1} \ocal_\fcal \otimes U)$
of $d \acal$ to the quintuple
\[
(N, \beta, \ecal^{-1} \ocal_\fcal \otimes U, \id, \ecal^{-1} \ocal_\fcal \otimes U ).
\]
The functor $\inc$ is full and faithful.
The right adjoint $\Gamma$ is called the \textbf{torsion functor} and is defined below
as the composite of two functors $\Gamma_v$ and $\Gamma_h$,
see Definitions \ref{def:torsionvertical} and \ref{def:torsionhorizontal}.
Moreover, the unit map $A \to \Gamma \inc A$ is an isomorphism
for any object of $d \acal$.
\end{lemma}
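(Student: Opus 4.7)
The plan is to verify the three assertions in turn, deferring the precise shape of $\Gamma=\Gamma_v\Gamma_h$ to the subsequent definitions. That $\inc$ is a functor into $dR_\bullet\leftmod$ is nearly automatic: the square required by the quintuple $(N,\beta,\ecal^{-1}\ocal_\fcal\otimes U,\id,\ecal^{-1}\ocal_\fcal\otimes U)$ is trivial since the right-hand arrow is the identity and the left-hand arrow is $\ecal^{-1}\beta$, which is an isomorphism by the definition of $d\acal$. On a morphism $(\theta,\phi)$ of $d\acal$ I would take $(f,g,h)=(\theta,\,1\otimes\phi,\,1\otimes\phi)$; both compatibility squares commute by naturality of tensor.

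For full faithfulness, given $(f,g,h)\co \inc A\to \inc A'$ the right square forces $g=\gamma'(1\otimes h)$, and since $\gamma'=\id$ and $g$ must be $\ecal^{-1}\ocal_\fcal$-linear between modules of the form $\ecal^{-1}\ocal_\fcal\otimes(-)$, one pins $h$ down as $1\otimes\phi$ for a unique $\qq$-linear $\phi\co U\to U'$. The remaining left square is then exactly the defining square of a morphism $(f,\phi)\co A\to A'$ in $d\acal$, which is the desired bijection.

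For the adjunction, the torsion functor is built so that $\Gamma_h$ normalises the right-hand pair $(\gamma,U)$ into the canonical form in which the new $\gamma$ is an identity, while $\Gamma_v$ replaces the nub by a suitable pullback in order to force $\ecal^{-1}\beta_{\Gamma X}$ to become an isomorphism. Granted these, I would establish the bijection
\[
\hom_{dR_\bullet\leftmod}(\inc A,\,X)\;\cong\;\hom_{d\acal}(A,\,\Gamma X)
\]
by applying the universal properties of the two normalisation steps in succession to the data extracted in the paragraph above. The unit isomorphism $A\cong \Gamma\inc A$ then falls out: $\inc A$ already satisfies both normalisations (its right-hand arrow is the identity and its left-hand arrow is an isomorphism after inverting $\ecal$), so each of $\Gamma_h$ and $\Gamma_v$ acts as the identity on $\inc A$.

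I expect the main obstacle to be not the adjunction formalism itself but showing that $\Gamma X$ genuinely lies in $d\acal$, i.e.\ that $\ecal^{-1}\beta_{\Gamma X}$ really is an isomorphism. The key input should be exactness of localisation at $\ecal$, applied to the pullback defining the nub of $\Gamma X$, together with the fact that the middle object $N$ of any quintuple is already an $\ecal^{-1}\ocal_\fcal$-module (so invariant under localisation). Once this is checked, the rest of the argument is a matter of diagram-chasing the universal properties through the two-step definition of $\Gamma$.
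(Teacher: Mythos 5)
Your overall plan — verify full faithfulness of $\inc$, obtain the adjunction by chaining the universal properties of the two normalisation steps, and deduce the unit isomorphism from the observation that $\inc A$ is already normalised — is a reasonable one, and since the paper gives no direct argument (it only cites \cite[Sections 7 and 8]{gre12standardalgebra} and \cite[Section 20.2]{gre99}), there is no ``paper proof'' to compare against in detail.

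That said, you have the two halves of the torsion functor swapped. Looking at Definitions \ref{def:torsionvertical} and \ref{def:torsionhorizontal} and the displayed composite $\Gamma = \Gamma_h\circ\Gamma_v$: it is $\Gamma_v$ (not $\Gamma_h$) that carries $d R_\bullet\leftmod$ to $d\hat{\acal}$, pulling back along $\gamma$ so that the new right-hand arrow is the identity; and it is $\Gamma_h$ (not $\Gamma_v$) that carries $d\hat{\acal}$ to $d\acal$, via a second pullback built from the colimit $\ecal^{-1}N(c^0)$, which is what forces the structure map to become an isomorphism after inverting $\ecal$. Both steps alter the nub by a pullback, so describing ``$\Gamma_v$ replaces the nub by a pullback'' as the distinguishing feature of that functor is also not quite the right dichotomy. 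Your claim that each of $\Gamma_v$, $\Gamma_h$ acts as the identity on $\inc A$ (so the unit is an isomorphism) is correct, but only once the roles are restored: $\Gamma_v$ does nothing because the right-hand arrow of $\inc A$ already has the $d\hat{\acal}$ shape, and $\Gamma_h$ does nothing because $\ecal^{-1}\beta$ is already an isomorphism.

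One further small point: the quintuple displayed in the lemma should have $U$, not $\ecal^{-1}\ocal_\fcal\otimes U$, as its last entry — compare the definition of $d\hat{\acal}$ as the full subcategory of $d R_\bullet\leftmod$ on quintuples $(N,\beta,\ecal^{-1}\ocal_\fcal\otimes U,\id,U)$, and note that the identity $\gamma = \id$ only makes sense when the fifth entry is $U$. Your full-faithfulness paragraph is contorted precisely because it is working around this typo; the clean version is that $\gamma = \gamma' = \id$ forces $g = 1\otimes h$ for $h\co U\to U'$, and precomposing the left square with $N\to\ecal^{-1}N$ then recovers exactly the commuting square defining a morphism $(f,h)\co A\to A'$ of $d\acal$, giving the desired bijection.
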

\begin{proof}
See \cite[Sections 7 and 8]{gre12standardalgebra} and \cite[Section 20.2]{gre99}.
\end{proof}

We need a category $d \hat{\acal}$ that is half-way between
$d R_\bullet \leftmod$ and $d \acal$. We define $d \hat{\acal}$ as
$d \acal$ without the restriction that $\ecal^{-1} \beta$ be an isomorphism.
Hence $d \acal$ is a full subcategory of $d \hat{\acal}$.
An equivalent definition of $d \hat{\acal}$ is given below as a full subcategory of
$d R_\bullet \leftmod$.
We will generally write an object of $d \hat{\acal}$ as
$(\beta \co N \to \ocal_\fcal \otimes U)$ instead of a quintuple.
We will write $\inc$
for any of these inclusions of subcategories.

\begin{definition}
The category $d \hat{\acal} =d \hat{\acal}(\torus)$ is the full subcategory of
$d R_\bullet \leftmod$ on objects of the form
\[
(N, \beta, \ecal^{-1} \ocal_\fcal \otimes U, \id, U ).
\]
\end{definition}

To define the functor $\Gamma$ it is easiest to describe it as the composite of two functors:
\[
\xymatrix@C+0.5cm{
d \acal
&
\ar[l]^-{\Gamma_h}
d \hat{\acal}
&
\ar[l]^-{\Gamma_v}
\ar@/_2pc/@<-0ex>[ll]_-{\Gamma}
d R_\bullet \leftmod.
}
\]

\begin{definition}\label{def:torsionvertical}
For an object $A= (M,\alpha, N, \gamma, U)$ of $d R_\bullet \leftmod$
We define $\Gamma_v A \in d \hat{\acal}$ to be the
map $\beta$ in the pullback diagram below.
\[
\xymatrix@C+0.5cm{
P \ar[rr]^-{\beta} \ar[d] &&
\ecal^{-1} \ocal_\fcal \otimes U \ar[d]^-{\gamma} \\
M \ar[r] &
\ecal^{-1} M \ar[r]^-{\alpha} &
N
}
\]
\end{definition}
It is easily seen that $\Gamma_v$ is the right adjoint to the inclusion of
$d \hat{\acal}$ into $d R_\bullet \leftmod$.
The functor $\Gamma_h$
is much more complicated to define.
In order to do so, we introduce algebraic spheres and suspensions.

\begin{definition}
If $A=(\beta \co N \to \ecal^{-1} \ocal_\fcal \otimes U)$ is an
element of $d \hat{\acal}$ and $\nu \co \fcal \to \zz_{\geqslant 0}$
is a function with finite support
then we define objects of $d \hat{\acal}$
\[
\begin{array}{r}
\Sigma^\nu A =
\left(  (c^{-\nu} \otimes \id_U) \circ \beta \co
\Sigma^\nu N \to \ecal^{-1} \ocal_\fcal \otimes U \right) \\
\Sigma^{-\nu} A =
\left( (c^{\nu} \otimes \id_U) \circ \beta \co
\Sigma^{-\nu} N \to \ecal^{-1} \ocal_\fcal \otimes U \right)
\end{array}
\]
where
$c^{\nu} \co \ecal^{-1} \ocal_\fcal \overset{\cong}{\longrightarrow} \ecal^{-1} \ocal_\fcal$.
We call the functor $\Sigma^\nu \co d \hat{\acal} \to d \hat{\acal}$
\textbf{suspension by the function $\nu$} and $\Sigma^{-\nu}$
\textbf{desuspension by the function $\nu$}.
It is readily seen that if $A$ is in $d \acal$, then so are
$\Sigma^\nu A$ and $\Sigma^{-\nu} A$.
\end{definition}

\begin{definition}\label{def:algshere}
Define $\ocal_\fcal(\nu)$ to be the submodule of
$\ecal^{-1} \ocal_\fcal$ given by
\[
\ocal_\fcal(\nu)= \{ x \in \ecal^{-1} \ocal_\fcal \ | \ c^\nu x \in \ocal_\fcal \}.
\]
That is $\ocal_\fcal(\nu)$ is generated by the (finite collection) of elements
$e_{\fcal_i} c^{-i}$, where $\fcal_i$ is the set of subgroups
$H$ of $\torus$ where $\nu(H) = i$, for $i \geqslant 0$.

We define $S^\nu \in d \acal$, an \textbf{algebraic sphere},
to be the inclusion map
\[
S^\nu = (\ocal_\fcal(\nu) \to \ecal^{-1} \ocal_\fcal)
\]
equipped with trivial differential.
We can also define $S^{-\nu}\in d \acal$. The nub $\ocal_\fcal(-\nu)$
is the set of those
$x \in \ecal^{-1} \ocal_\fcal$ such that $c^{-\nu} x$ is in $\ocal_\fcal$.
\end{definition}

\begin{ex}
If $V$ is a complex representation of $SO(2)$ with $V^\torus=0$, then there is a $\torus$--equivariant spectrum
$\Sigma^\infty S^V$ and $\pi_*^\acal(\Sigma^\infty S^V) =S^\nu$
where $\nu(H) = \dim_\cc(V^H)$. We call this a \textbf{representation sphere}.
\end{ex}

\begin{lemma}
For $\nu \co \fcal \to \zz_{\geqslant 0}$ of finite support,
multiplication by $c^{-\nu}$ induces an isomorphism in $d \acal$
\[
c^{-\nu} \co \Sigma^\nu S^0 \longrightarrow S^\nu.
\]
\end{lemma}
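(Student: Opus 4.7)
The plan is to unpack the definitions on both sides, exhibit multiplication by $c^{-\nu}$ as the nub component of a map in $d\acal$, take the identity on the vertex, and then check the two conditions: commutativity of the defining square and bijectivity of the nub component.

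First I would make the source and target explicit. The object $S^0$ has nub $\ocal_\fcal$, vertex $\qq$, and structure map the canonical inclusion $\ocal_\fcal \hookrightarrow \ecal^{-1}\ocal_\fcal$. Applying the suspension of Definition preceding \ref{def:algshere}, the nub of $\Sigma^\nu S^0$ is $\bigoplus_{i=0}^n \Sigma^{2i} e_{\fcal_i}\ocal_\fcal$, its vertex is $\qq$, and its structure map is $c^{-\nu}$ into $\ecal^{-1}\ocal_\fcal$, where on the $i$-th piece this is multiplication by $e_{\fcal_i} c^{-i}$. On the other side, $S^\nu$ has nub $\ocal_\fcal(\nu)$, vertex $\qq$, and structure map the inclusion into $\ecal^{-1}\ocal_\fcal$.

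Next I would define the candidate morphism $(\theta,\phi) \co \Sigma^\nu S^0 \to S^\nu$ by $\phi = \id_\qq$ and $\theta$ the map of $\ocal_\fcal$-modules given by multiplication by $c^{-\nu}$, with the degree shifts in $\Sigma^\nu \ocal_\fcal$ exactly absorbing the degree of $c^{-\nu}$ on each summand. The defining square
\[
\xymatrix@C+1cm{
\Sigma^\nu \ocal_\fcal \ar[r]^{c^{-\nu}} \ar[d]_{\theta} &
\ecal^{-1}\ocal_\fcal \otimes \qq \ar[d]^{\id} \\
\ocal_\fcal(\nu) \ar[r]^{\subset} &
\ecal^{-1}\ocal_\fcal \otimes \qq
}
\]
then commutes on the nose, since both routes send $y \in e_{\fcal_i} \ocal_\fcal$ to $e_{\fcal_i} c^{-i} y$. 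In particular the image of $\theta$ does land in $\ocal_\fcal(\nu)$ because $c^\nu \cdot c^{-\nu} y = y \in \ocal_\fcal$.

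Finally I would verify that $\theta$ is a bijection. For surjectivity, any $z \in \ocal_\fcal(\nu)$ decomposes as $z = \sum_i e_{\fcal_i} c^{-i} z_i$ with $z_i \in e_{\fcal_i}\ocal_\fcal$ (a direct consequence of $c^\nu z \in \ocal_\fcal$ and the idempotent decomposition), and this is precisely $\theta$ applied to $\sum_i z_i \in \Sigma^\nu \ocal_\fcal$. For injectivity, if $c^{-\nu} y = 0$ in $\ecal^{-1}\ocal_\fcal$, multiply by $c^\nu$ (which is invertible there) to get $y = 0$ in $\ecal^{-1}\ocal_\fcal$, and therefore $y = 0$ in $\ocal_\fcal$ since $\ocal_\fcal$ is $\ecal$-torsion free. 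The only subtlety worth flagging is the bookkeeping with degree shifts in $\Sigma^\nu \ocal_\fcal$; once that is pinned down, the rest is immediate from the definitions, and there is no substantive obstacle.
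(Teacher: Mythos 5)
Your proof is correct, and it fills in precisely the details that the paper itself elides: the paper's ``proof'' is only a sketch by analogy with the single--variable case $d^{-n}\co \Sigma^{2n}\qq[d]\to\qq\langle d^{-n}\rangle$, leaving the reader to assemble the corresponding argument factor by factor over $\prod_H \qq[c_H]$ with the idempotents $e_{\fcal_i}$. You carry that out: identifying the nub of $\Sigma^\nu S^0$ as $\oplus_i \Sigma^{2i}e_{\fcal_i}\ocal_\fcal$ with structure map given on the $i$-th piece by multiplication by $e_{\fcal_i}c^{-i}$, observing that $c^{-\nu}$ lands in $\ocal_\fcal(\nu)$ by the defining condition $c^\nu x\in\ocal_\fcal$, verifying commutativity of the square with the identity on the vertex, and then checking bijectivity via the idempotent decomposition $z=\sum_i e_{\fcal_i}c^{-i}(e_{\fcal_i}c^i z)$ for surjectivity and the absence of $\ecal$-torsion in $\ocal_\fcal$ for injectivity. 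This is the ``more complicated version'' the paper alludes to without writing down, and every step matches the definitions of suspension (the definition preceding Definition \ref{def:algshere}) and of $\ocal_\fcal(\nu)$ in Definition \ref{def:algshere}.
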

\begin{proof}
This isomorphism is a more complicated version of the
following simple observation. Let $d$ have degree $-2$.
Define $\qq \langle d^{-n} \rangle$
to be the $\qq[d]$ submodule of $\qq[d,d^{-1}]$
generated by $d^{-n}$. So as a graded $\qq$ module,
$\qq \langle d^{-n} \rangle$
has a copy of $\qq$ in every degree $2k$
for $k \leqslant n$.
Multiplication by $d^{-n}$ is an isomorphism
\[
d^{-n} \co \Sigma^{2n} \qq[d] \to
\qq \langle d^{-n} \rangle.
\]
\end{proof}

For $A$ and $B$ in $d \acal$, we define
$\acal(A, B)_*$ to be the graded set of maps
from the underlying object of $A$ in $\acal$
to the underlying object of $B$ in $\acal$.
We equip this graded $\qq$--module with the
differential induced by the convention
$df_n = d_B f_n + (-1)^{n+1} f_n d_A$.
By considering an object of $\acal$ as an object of
$d \acal$ with no differential, we can extend the definition
of $\acal(A,B)_*$ to allow for the case where $A$
is in $\acal$.

Let $C=(N \to \ecal^{-1} \ocal_\fcal \otimes U)$
be an object of $d \hat{\acal}$. We define a rational chain complex
from $C$ by
\[
\ecal^{-1}N(c^0) = \colim_{\nu} \hat{\acal} (S^{-\nu}, C)_*.
\]
where the colimit runs over the partially ordered set of functions
$\fcal \to \zz_{\geqslant 0}$ of finite support
and uses the inclusion map in $d \hat{\acal}$:  $S^{-\nu'} \to S^{-\nu}$
for $\nu \geqslant \nu'$.
The differential on the graded $\qq$--module
$\hat{\acal} (S^{-\nu}, C)_*$ is
$d f_n = d_C f_n$.

\begin{definition}\label{def:torsionhorizontal}
Let $C=(N \to \ecal^{-1} \ocal_\fcal \otimes U)$
be an object of $d \hat{\acal}$.
We define $\Gamma_h C$ to be the left--hand vertical arrow of the following diagram.
The lower horizontal map is induced by evaluation:
$c^{-\omega} x \otimes (\theta,\phi) \mapsto c^{-\omega} \theta(x)$, where
$(\theta,\phi) \co S^{-\nu} \to C$.
\[
\xymatrix{
N' \ar[r] \ar[d] &
N \ar[d] \\
\ecal^{-1} \ocal_\fcal \otimes \ecal^{-1}N(c^0) \ar[r] &
\ecal^{-1} N
}
\]
\end{definition}

Now that we understand $\Gamma_h$ and $\Gamma_v$,
we may construct limits and colimits
in $d \acal$ and $d \hat{\acal}$.
We leave it as an exercise
to the interested reader to
verify that these are actually constructions of colimits and limits.

\begin{definition}
Let $I$ be some small category and let
$\{ N_i \to \ecal^{-1} \ocal_\fcal \otimes U_i \}$
be the objects of some $I$--shaped diagram in $d \acal$
(or $d \hat{\acal}$).
The colimit over $I$ is
\[
\colim_i N_i \to \ecal^{-1} \ocal_\fcal \otimes (\colim_i U_i) .
\]
The limit in $d \hat{\acal}$ is formed by first including the
objects into $d R_\bullet \leftmod$, taking limits in
this larger category and then applying $\Gamma_v$.
Similarly, the limit in $d \acal$ is formed by first including the
objects into $d R_\bullet \leftmod$, taking limits in
this larger category and then applying $\Gamma$.
\end{definition}

To rephrase the above, the limit of the $I$--shaped diagram
$\{ N_i \to \ecal^{-1} \ocal_\fcal \otimes U_i \}$ in $d \hat{\acal}$
is the map $f$ in the following pullback square.
Equally, the limit of the $I$--shaped diagram
$\{ N_i \to \ecal^{-1} \ocal_\fcal \otimes U_i \}$ in $d \acal$
is $\Gamma_h f$.
\[
\xymatrix{
M \ar[r]^-{f}
\ar[d] &
\ecal^{-1} \ocal_\fcal \otimes \lim(U_i)
\ar[d] \\
\lim(N_i) \ar[r] &
\lim(\ecal^{-1} \ocal_\fcal \otimes U_i)
}
\]

We reiterate that the above constructions and results can be extended
to categories without differentials, since they preserve objects whose differential is zero.

\section{Monoidal products}\label{sec:monoidal}

In this section we give the definitions of the monoidal product and function object
from \cite{gre99}.
We also introduce useful adjoint pairs with the
categories of $\qq$--modules and $\ocal_\fcal$--modules.
Again we use the notation of categories with differentials,
but the obvious analogues hold for categories without differentials.

\begin{definition}
For
$\beta \co N \to \ecal^{-1} \ocal_\fcal \otimes U$ and
$\beta' \co N' \to \ecal^{-1} \ocal_\fcal \otimes U'$ in $d \acal$
(or $d \hat{ \acal}$), their
\textbf{tensor product} is
\[
\beta \otimes \beta' \co
N \otimes_{\ocal_\fcal} N'
\to
(\ecal^{-1} \ocal_\fcal \otimes U)
\otimes_{\ocal_\fcal}
(\ecal^{-1} \ocal_\fcal \otimes U')
\cong
\ecal^{-1} \ocal_\fcal \otimes (U \otimes_\qq U')
\]
The unit of this monoidal product is the object
$S^0 = (i \co \ocal_\fcal \to \ecal^{-1} \ocal_\fcal \otimes \qq$).
The differential is given by the usual rule:
$d_{n,m} = d_n \otimes 1 + (-1)^n 1 \otimes d_m$.

Similarly the tensor product in $d R_\bullet \leftmod$
is defined objectwise and its unit is
$(\ocal_\fcal, i, \ecal^{-1} \ocal_\fcal, i', \qq)$
where $i$ and $i'$ are the inclusions.
\end{definition}

\begin{ex}
The tensor product of two algebraic
spheres $S^\nu$ and $S^{\nu'}$ is the algebraic sphere $S^{\nu + \nu'}$.
\end{ex}

\begin{lemma}
The functor $\inc$ is a symmetric  monoidal functor from
$d\acal$ to $R_\bullet \leftmod$.
\end{lemma}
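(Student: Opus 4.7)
The plan is to build a natural coherent isomorphism $\mu_{A,B} \co \inc(A) \otimes \inc(B) \to \inc(A \otimes B)$ together with a unit isomorphism $e \co \mathbf{1}_{d R_\bullet \leftmod} \to \inc(S^0)$, and then verify the associativity, unit and symmetry axioms. The guiding observation is that the tensor product on $d R_\bullet \leftmod$ is defined vertex by vertex, so $\mu_{A,B}$ will be built componentwise.

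Write $A = (\beta \co N \to \ecal^{-1} \ocal_\fcal \otimes U)$ and $B = (\beta' \co N' \to \ecal^{-1} \ocal_\fcal \otimes U')$. The three module entries of $\inc(A) \otimes \inc(B)$ are $N \otimes_{\ocal_\fcal} N'$, the middle term $(\ecal^{-1} \ocal_\fcal \otimes U) \otimes_{\ecal^{-1} \ocal_\fcal} (\ecal^{-1} \ocal_\fcal \otimes U')$ and $U \otimes_\qq U'$, while the corresponding entries of $\inc(A \otimes B)$ are $N \otimes_{\ocal_\fcal} N'$, $\ecal^{-1} \ocal_\fcal \otimes (U \otimes_\qq U')$ and $U \otimes_\qq U'$. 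Taking the identity on the outer two vertices and the canonical iso on the middle vertex defines the candidate $\mu_{A,B}$.

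It remains to check that $\mu_{A,B}$ intertwines the structure maps $\alpha$ and $\gamma$ of the two quintuples. The $\gamma$-maps are identities on both sides and visibly match after the middle-vertex identification. The $\alpha$-side reduces to the assertion that, under the canonical iso $\ecal^{-1}(N \otimes_{\ocal_\fcal} N') \cong \ecal^{-1} N \otimes_{\ecal^{-1} \ocal_\fcal} \ecal^{-1} N'$, the map $\ecal^{-1}(\beta \otimes \beta')$ agrees with $\ecal^{-1}\beta \otimes \ecal^{-1}\beta'$; this is the standard flatness statement for the localisation $\ocal_\fcal \to \ecal^{-1} \ocal_\fcal$.

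For the unit, $\inc(S^0) = (\ocal_\fcal, i, \ecal^{-1} \ocal_\fcal \otimes \qq, \id, \qq)$ agrees with the stated unit $(\ocal_\fcal, i, \ecal^{-1} \ocal_\fcal, i', \qq)$ up to the trivial iso $\ecal^{-1} \ocal_\fcal \otimes \qq \cong \ecal^{-1} \ocal_\fcal$ on the middle vertex, giving $e$. The associativity, unit and symmetry coherence diagrams then decompose vertex by vertex into the corresponding diagrams in $\ocal_\fcal \leftmod$, $\ecal^{-1} \ocal_\fcal \leftmod$ and $\qq \leftmod$, so they hold tautologically. I do not foresee any deep obstacle: the only piece of genuine algebra is the commutation of $\ecal^{-1}(-)$ with $\otimes_{\ocal_\fcal}$ on the middle vertex, and everything else is bookkeeping across the three module categories.
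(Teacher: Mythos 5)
The paper states this lemma without proof, treating it as a routine observation; your componentwise verification is precisely the expected argument and is correct. The key middle-vertex identification $\ecal^{-1}(N \otimes_{\ocal_\fcal} N') \cong \ecal^{-1}N \otimes_{\ecal^{-1}\ocal_\fcal} \ecal^{-1}N'$ is indeed the standard base-change isomorphism for $\ocal_\fcal \to \ecal^{-1}\ocal_\fcal$, and once it is in place the coherence diagrams do decompose across the three module categories as you say.
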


The monoidal product on $\acal$ is homotopically meaningful, as we can see from following result, which is \cite[Theorem 24.1.2]{gre99}.

\begin{theorem}[Greenlees]\label{thm:greenleesmonoidal}
For rational $\torus$-spectra $X$ and $Y$
there is a short exact sequence in $\acal$
\[
0
\to
\pi_*^\acal(X) \otimes \pi_*^\acal(Y)
\to \pi_*^\acal(X \smashprod Y)
\to
\Sigma \tor (\pi_*^\acal(X),\pi_*^\acal(Y))
\to 0.
\]
\end{theorem}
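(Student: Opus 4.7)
The plan is to prove a Künneth-style statement via a short flat resolution of $\pi_*^\acal(X)$ in $\acal$ and then apply the homology long exact sequence of a cofibre sequence of $\torus$--spectra realising that resolution. Write $A = \pi_*^\acal(X)$ and $B = \pi_*^\acal(Y)$. The target short exact sequence is exactly what one obtains from a two--term flat resolution of $A$ once the Künneth spectral sequence collapses, so the job reduces to (a) showing $\acal$ has flat dimension at most $1$, (b) realising a flat resolution topologically, and (c) identifying the effect of smashing with a flat object.

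For (a), I would use that $\acal$ can be presented by the standard building blocks $S^\nu$ and the algebraic spheres introduced in Definition~\ref{def:algshere}. One should show that any object $A$ of $\acal$ admits a surjection from a (possibly infinite) direct sum $F_0$ of suspensions of algebraic spheres, and that the kernel $F_1$ is again a sum of such objects (or at least flat in the sense that $-\otimes F_1$ is exact). The key point is that algebraic spheres are dualisable, hence flat in $\acal$, so the description $\nub(S^\nu) = \ocal_\fcal(\nu) \subseteq \ecal^{-1}\ocal_\fcal$ implies there is no $\ecal$--torsion and one can invoke the standard observation that flat $\ocal_\fcal$--modules have no $\ecal$--torsion. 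Once this is established, one has $0 \to F_1 \to F_0 \to A \to 0$ with both $F_i$ flat.

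For (b) and (c), I would realise $F_0 \twoheadrightarrow A$ as a map of $\torus$--spectra $X_0 \to X$, using that each $S^\nu$ arises as $\pi_*^\acal(\Sigma^\infty S^V)$ for a suitable representation sphere, and take $X_1$ to be the fibre. By construction $\pi_*^\acal(X_i) = F_i$. Smashing the cofibre sequence $X_1 \to X_0 \to X$ with $Y$ yields a cofibre sequence, and for the flat objects $F_i$ one expects $\pi_*^\acal(X_i \smashprod Y) \cong F_i \otimes B$, which is essentially the statement that smashing with a spectrum whose algebraic model is flat realises the algebraic tensor product. Granting this, the long exact sequence of $\pi_*^\acal$ applied to the cofibre sequence reads
\[
\cdots \to F_1 \otimes B \xrightarrow{f \otimes 1} F_0 \otimes B \to \pi_*^\acal(X\smashprod Y) \to \Sigma F_1 \otimes B \xrightarrow{\Sigma f \otimes 1} \Sigma F_0 \otimes B \to \cdots
\]
Since $F_1 \to F_0 \to A$ is a flat resolution, the cokernel of $f\otimes 1$ is $A \otimes B$ and the kernel of $f\otimes 1$ is $\tor(A,B)$. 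Splitting the long exact sequence at $\pi_*^\acal(X\smashprod Y)$ then gives the claimed short exact sequence.

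The main obstacle is step (c): proving that smashing with a topological realisation of a flat object in $\acal$ induces $-\otimes F_i$ on $\pi_*^\acal$. Since $\pi_*^\acal$ is not a representable homology theory in a naive sense but a pair of invariants glued along $\ecal^{-1}(-)$, one has to verify compatibility both on the nub (a geometric fixed point / Borel calculation using $\pi_*^\torus(X \smashprod DE\fcal_+)$) and on the vertex (a $\Phi^\torus$ calculation), and check that the comparison map respects the inversion morphism $\beta$. This is where the Adams short exact sequence of Theorem~\ref{thm:torusadams} and flatness of $F_i$ together are needed: flatness kills the $\ext$ term and makes the Künneth comparison map well defined and natural. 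Once that compatibility is in hand, the rest is formal homological algebra in $\acal$.
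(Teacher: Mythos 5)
The paper does not prove this theorem; it imports it directly from Greenlees's memoir, citing it as \cite[Theorem~24.1.2]{gre99}, so there is no internal proof to compare against. That said, the flat-resolution/realisation strategy you outline is in the same spirit as Greenlees's argument (the paper even remarks, in the proof of Theorem~\ref{thm:monoidal}, that ``the ad-hoc construction of $\tor$\ldots is defined using the wide spheres''), so the overall plan is sensible. But there is one concrete error and one mislabelled ingredient that would cause trouble if you tried to carry this out.

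The concrete error is in step (a): you propose to build the two-term flat resolution out of algebraic spheres $S^\nu$, but these do not give a generating class for $\acal$. An algebraic sphere has one-dimensional vertex and nub $\ocal_\fcal(\nu)\subset\ecal^{-1}\ocal_\fcal$, so a map from a coproduct of algebraic spheres into $A=(\beta\co N\to\ecal^{-1}\ocal_\fcal\otimes U)$ can only hit nub elements $n$ with $\beta(n)$ of the very restricted form $c^{-a}\otimes u$. A general element of $N$ has $\beta(n)=\sum_i\sigma_i\otimes u_i$ for arbitrary $\sigma_i\in\ecal^{-1}\ocal_\fcal$, and to surject onto such an $n$ one needs the \emph{wide spheres} of Definition~\ref{def:widesphere}, whose nub is generated by both coordinate elements $c^{a_i}\otimes t_i$ and a ``diagonal'' element $\sum_i\sigma_i\otimes t_i$. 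This is exactly the content of \cite[Lemma~22.3.4]{gre99}, reproduced after Definition~\ref{def:widesphere}, and the paper's proof of Theorem~\ref{thm:monoidal} relies on it. Your $F_0$ should therefore be a sum of wide spheres, not algebraic spheres; the algebraic spheres alone only generate the triangulated homotopy category (Proposition~\ref{prop:repgenerate}), not the abelian category via surjections.

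The mislabelled ingredient is in step (b): you appeal to the fact that $S^\nu = \pi_*^\acal(\Sigma^\infty S^V)$, but this only holds when $\nu$ is the dimension function of a representation $V$, and it certainly does not apply to wide spheres, which are not of the form $\pi_*^\acal(S^V)$ at all. The correct justification for realisability is softer: the triangulated equivalence $\ho(\textrm{rational }\torus\textrm{-spectra})\simeq D(\acal)$ realises any object of $\acal$ (viewed as a complex with zero differential) as $\pi_*^\acal$ of some spectrum, and the Adams short exact sequence of Theorem~\ref{thm:torusadams} lets you realise the map $F_0\to A$ as a map of spectra. You are right that step (c) — showing that smashing with a realisation of a flat wide-sphere sum computes $F_0\otimes B$ on $\pi_*^\acal$ — is where the actual content sits; as the present paper stresses, this comparison could not at the time be deduced from a monoidal Quillen equivalence (the absence of which is precisely the paper's motivation), so Greenlees's proof must establish it by hand, exploiting dualisability of the wide spheres.
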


The monoidal structure on each of these categories is closed,
that is, each category has an internal function object.
We deal with $R_\bullet \leftmod$ first, as the other two
function objects are built from this.

\begin{definition}\label{def:intfunction1}
In $R_\bullet \leftmod$ the internal function object is defined as
\[
F_{R^\bullet} \left( (M,\alpha, N, \gamma, U) ,  (M',\alpha', N', \gamma', U') \right)
=
(D,\theta, \hom_{\ecal^{-1} \ocal_\fcal}(N,N'), \phi, E)
 \]
with $D$, $E$, $\theta$ and $\phi$ defined in the pullback diagrams below.
In the following, we let $i^*$ denote the context-appropriate forgetful functor and
let
$\bar{\alpha} \co M \to i^*N$ be the adjoint to $\alpha$
and $\bar{\gamma} \co U \to i^*N$. be the adjoint to $\gamma$.
\[
\xymatrix{
D \ar[r]^-{\bar{\theta}} \ar[dd] & i^* \hom_{\ecal^{-1} \ocal_\fcal}(N,N') \ar[d]
&
i^* \hom_{\ecal^{-1} \ocal_\fcal}(N,N') \ar[d] & E \ar[l]_-{\bar{\phi}} \ar[dd] \\
& \hom_{\ocal_\fcal}(i^*N,i^*N') \ar[d]^-{\bar{\alpha}^*}
&
\hom_\qq(i^*N,i^*N') \ar[d]_-{\bar{\gamma}^*} \\
\hom_{\ocal_\fcal}(M,M') \ar[r]^-{\bar{\alpha}'_*} &
\hom_{\ocal_\fcal}(M,i^*N')
&
\hom_\qq(U,i^*N') &
\hom_\qq(U,U') \ar[l]_-{\bar{\gamma}'_*}
}
\]
\end{definition}

\begin{definition}\label{def:intfunction2}
Letting $\inc$ denote the inclusion of either of
$d \hat{\acal}$ or $d \acal$ into $R_\bullet \leftmod$,
we define the internal function object of $d \hat{\acal}$ to be:
\[
F_{\hat{\acal}} \left( A ,  B \right)
= \Gamma_v F_{R^\bullet} ( \inc A, \inc B ) .
\]
Similarly, we define the internal function object of $d \acal$ to be:
\[
F_{\acal} \left( A ,  B \right)
= \Gamma F_{R^\bullet} ( \inc A, \inc B ) .
\]
\end{definition}

\begin{lemma}
The categories $d R_\bullet \leftmod$, $d \hat{\acal}$ and $d\acal$ are all closed monoidal categories.
\end{lemma}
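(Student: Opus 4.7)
The plan is to establish the closed structure on the largest category $d R_\bullet \leftmod$ by direct verification, and then transport it to $d \hat{\acal}$ and $d \acal$ along the adjunctions $\inc \dashv \Gamma_v$ and $\inc \dashv \Gamma$.

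For $d R_\bullet \leftmod$ I would unpack a morphism
\[
(M, \alpha, N, \gamma, U) \otimes (M', \alpha', N', \gamma', U') \longrightarrow (M'', \alpha'', N'', \gamma'', U'')
\]
into its three underlying module maps $f \co M \otimes_{\ocal_\fcal} M' \to M''$, $g \co N \otimes_{\ecal^{-1} \ocal_\fcal} N' \to N''$ and $h \co U \otimes_\qq U' \to U''$, together with the two commuting squares arising from the $\alpha$'s and $\gamma$'s. The three standard tensor--hom adjunctions, one in each of $\ocal_\fcal$--modules, $\ecal^{-1} \ocal_\fcal$--modules and $\qq$--modules, turn $f,g,h$ into adjoint maps $\tilde f, \tilde g, \tilde h$ into the respective internal homs. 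The commuting squares then translate, via the universal property of the pullback, into exactly the factorisations of $\tilde f$ through $D$ and $\tilde h$ through $E$ from Definition \ref{def:intfunction1}. This yields the natural isomorphism $\hom(A \otimes A', A'') \cong \hom(A, F_{R^\bullet}(A', A''))$.

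For $d \hat{\acal}$ and $d \acal$ the result is then formal. The inclusion $\inc$ is fully faithful in both cases (they are defined as full subcategories) and strong symmetric monoidal; the only non--trivial check for monoidality is the identification
\[
(\ecal^{-1} \ocal_\fcal \otimes U) \otimes_{\ecal^{-1} \ocal_\fcal} (\ecal^{-1} \ocal_\fcal \otimes U') \cong \ecal^{-1} \ocal_\fcal \otimes (U \otimes_\qq U'),
\]
which reconciles the middle slot of the componentwise tensor product in $d R_\bullet \leftmod$ with the tensor product already defined on $d \hat{\acal}$ and $d \acal$. Combining this, the closed structure on $d R_\bullet \leftmod$ established above, and the adjunction $\inc \dashv \Gamma_v$, we get for $A, B, C$ in $d \hat{\acal}$
\[
\hom_{d \hat{\acal}}(A \otimes B, C) \cong \hom_{R_\bullet}(\inc A \otimes \inc B, \inc C) \cong \hom_{R_\bullet}(\inc A, F_{R^\bullet}(\inc B, \inc C)) \cong \hom_{d \hat{\acal}}(A, F_{\hat{\acal}}(B, C)),
\]
and the same chain with $\Gamma$ in place of $\Gamma_v$ handles $d \acal$, since $\Gamma$ lands in $d \acal$ and $F_\acal$ was defined to be $\Gamma F_{R^\bullet}(\inc -, \inc -)$.

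The real obstacle is the first step: the notation--heavy bookkeeping needed to match the two commuting squares of a morphism out of a componentwise tensor product with the two pullback conditions packaged into $D$ and $E$, keeping track of which hom is being taken over $\ocal_\fcal$, which over $\ecal^{-1}\ocal_\fcal$, and which over $\qq$, and of the forgetful functors $i^*$. Once that mechanical verification is in hand, the closed structures on $d \hat{\acal}$ and $d \acal$ are produced automatically by transport along fully faithful strong monoidal left adjoints whose right adjoints are the torsion functors already in play.
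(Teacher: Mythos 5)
Your proposal is correct and takes essentially the same approach as the paper: establish the closed structure on $d R_\bullet \leftmod$ directly, then transport to $d\hat{\acal}$ and $d\acal$ along the fully faithful strong monoidal $\inc$ together with the adjunctions $\inc \dashv \Gamma_v$ and $\inc \dashv \Gamma$. The paper's own proof leaves the first two cases as exercises and gives precisely your chain of natural isomorphisms for $d\acal$, so your write-up amounts to the same argument with the omitted details of the $d R_\bullet \leftmod$ case and the $d\hat{\acal}$ case spelled out.
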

\begin{proof}
We leave the first two cases as an exercise and concentrate on the third.
Since $\inc$ is full, faithful and monoidal,
\[
d\acal(A \otimes B, C) \cong
d R_\bullet \leftmod (\inc A \otimes \inc B, \inc C).
\]
Hence $d\acal(A \otimes B, C)$ is naturally isomorphic to
$d \acal (A , \Gamma F_{R_\bullet}(\inc B, \inc C))$.
\end{proof}

One reason for the complicated form of the above definition is that
we need to make sure that our structure maps have the correct form.
For example, in Definition \ref{def:intfunction1} $E$ must be a $\qq$--module and $\phi$ be of the form
\[
\phi \co \ecal^{-1} \ocal_\fcal \otimes E \to  \hom_{\ecal^{-1} \ocal_\fcal}(N,N').
\]
Using the extra restrictions on objects of $d \hat{\acal}$ and $d \acal$
we can give a more direct construction of $F_{\acal}(-,-)$
and $F_{\hat{\acal}}(-,-)$. In particular, the pullback squares below are
essentially the `adjoints' of those above. We leave it to the reader to verify that
the constructions in the following example agree with the definitions above.

\begin{ex}
Consider two elements of $d\hat{\acal}$,
\[
A= (\beta \co N \to \ecal^{-1} \ocal_\fcal \otimes U)
\quad  {and} \quad
B= (\beta' \co N' \to \ecal^{-1} \ocal_\fcal \otimes U').
\]
The
\textbf{function object} $F_{\hat{\acal}}(A,B) \in d \hat{\acal}$ is the map
$\delta$,  as defined by the pullback square below.
\[
\xymatrix{
Q
\ar[dd]
\ar[r]^(0.4)\delta &
\ecal^{-1} \ocal_\fcal \otimes \hom_{\qq} (U,U')
\ar[d] \\
&
\hom_{\ocal_\fcal} (
\ecal^{-1} \ocal_\fcal \otimes U,
\ecal^{-1} \ocal_\fcal \otimes  U')
\ar[d]^{\beta^*} \\
\hom_{\ocal_\fcal} (N,N')
\ar[r]_-{\beta_*'} &
\hom_{\ocal_\fcal} (N,
\ecal^{-1} \ocal_\fcal \otimes  U')
}
\]
Now assume that $A$ and $B$ are in $d \acal$. Then we may construct the map
$\delta$ as above and we see that $F_{\acal}(A,B) \in d \acal$
is $\Gamma_h \delta$.
\end{ex}

For the sake of notation we often just write $F$ for any of these internal function objects. We can use the monoidal product and internal
function object to show that $d\acal$ is enriched, tensored and cotensored
over $\ch(\qq)$.

\begin{definition}\label{def:modulefunctors}
For $K \in \ch(\qq)$ we define $LK \in d \acal$ as
\[
LK= (i \otimes \id_K \co
\ocal_\fcal \otimes K \to
\ecal^{-1} \ocal_\fcal \otimes K)
\]

For $A$ and $B$ in $d \acal$, we define
$\acal(A, B)_*$ to be the graded set of maps
of $\acal$ (ignoring the differential). We then equip this
graded $\qq$--module with the differential induced by the convention
$df_n = d_B f_n + (-1)^{n+1} f_n d_A$.
This construction gives a functor as below.
\[
R \co d \acal \to \ch(\qq) \quad \quad
RA := \acal(S^0, A)_*
\]
\end{definition}

The functors $L$ and $R$ form an adjoint pair between
$\ch(\qq)$ and $d \acal$. Furthermore, they
give $d \acal$
the structure of a closed
$\ch(\qq)$--module in the sense of Hovey \cite[Section 4.1]{hov99}.

This module structure and the closed monoidal product interact to give
$d \acal$  a tensor product, a cotensor product and
an enrichment over $\ch(\qq)$.
Let $K \in \ch(\qq)$
and $A = (\beta \co N \to \ecal^{-1} \ocal_\fcal \otimes U)$ in
$d \acal$.
Their \textbf{tensor product} $A \otimes K$ is defined to be  $A \otimes LK$.
Thus $A \otimes K$ is given by
\[
\beta \otimes \id_K  \co N \otimes_\qq K \to
\ecal^{-1} \ocal_\fcal \otimes (U \otimes_\qq K).
\]
The \textbf{cotensor product} $A^K$ is defined as
$F(LK, A)$.
The \textbf{enrichment} is given by  $R F(A,B)$
for $A$ and $B$ in $d \acal$.
The enrichment, tensor and cotensor are related by the natural
isomorphisms below.
\[
d \acal(A, B^K)
\cong
d \acal(A \otimes K, B)
\cong
\ch(\qq)(K, R F(A,B))
\]

We also need to relate $d \acal$ to the category of
$d \ocal_\fcal$--modules. In particular, the following construction will be
essential to Proposition \ref{prop:dualfingenproj}.

\begin{definition}
There is an adjunction
\[
g_* : d \acal
 \adjunct
d \ocal_\fcal \leftmod  : g^*
\]
The left adjoint $g_*$ sends an
object of $d\acal$ to its nub.
For $N$ in $d \ocal_\fcal \leftmod$
we define the right adjoint by
$g^* N = \Gamma (N,0,0,0,0)= \Gamma_h (N \to 0)$,
where $(N,0,0,0,0) \in d R_\bullet \leftmod$
and $(N \to 0) \in d \hat{\acal}$.
\end{definition}

More specifically, we may also
describe the object $g^* N \in d \acal$ as the left hand vertical in the
pullback diagram below, where $\ecal^{-1} N$ is considered an
object of $\ch(\qq)$.
\[
\xymatrix{
P \ar[r] \ar[d] &
N \ar[d] \\
\ecal^{-1} \ocal_\fcal \otimes_\qq \ecal^{-1}N \ar[r] &
\ecal^{-1} N
}
\]

\begin{lemma}
The functor $g^*$ from the category of $\ocal_\fcal$--modules to $\acal$
is exact and commutes with filtered colimits.
\end{lemma}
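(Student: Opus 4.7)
The plan is to reduce both properties to a componentwise analysis using the pullback description of $g^{*}N$. Since $g^{*}$ is a right adjoint it preserves all limits and is in particular left exact, so to establish exactness I need only verify that $g^{*}$ preserves epimorphisms. I would begin by checking that a sequence $0 \to A_1 \to A_2 \to A_3 \to 0$ in $\acal$ is short exact if and only if the induced sequences of nubs (in $\ocal_\fcal$-modules) and vertices (in $\qq$-modules) are short exact. The nontrivial input is the kernel calculation: for a morphism $(\theta,\phi) \co A_2 \to A_3$ in $\acal$, the candidate kernel in the arrow category $\hat{\acal}$ has nub $\ker\theta$ and vertex $\ker\phi$, and this candidate lies in $\acal$ because $\ecal^{-1}(-)$ and $\ecal^{-1}\ocal_\fcal \otimes_\qq (-)$ are both exact and each $\ecal^{-1}\beta_i$ is an isomorphism by hypothesis, so $\ecal^{-1}\ker\theta \to \ecal^{-1}\ocal_\fcal \otimes \ker\phi$ is an isomorphism.

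Starting from a short exact sequence $0 \to N_1 \to N_2 \to N_3 \to 0$ of $\ocal_\fcal$-modules, the vertex of $g^{*}N_i$ is $\ecal^{-1}N_i$, so vertex exactness is immediate. The nub is the pullback $P_i = N_i \times_{\ecal^{-1}N_i}(\ecal^{-1}\ocal_\fcal \otimes_\qq \ecal^{-1}N_i)$, and since $\ecal^{-1}N_i$ is an $\ecal^{-1}\ocal_\fcal$-module the right-hand map $\ecal^{-1}\ocal_\fcal \otimes \ecal^{-1}N_i \to \ecal^{-1}N_i$ is surjective. Consequently the pullback sits in a short exact sequence
\[
0 \to K_i \to P_i \to N_i \to 0
\]
with $K_i = \ker(\ecal^{-1}\ocal_\fcal \otimes \ecal^{-1}N_i \to \ecal^{-1}N_i)$. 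Exactness of $\ecal^{-1}(-)$ and of $\ecal^{-1}\ocal_\fcal \otimes_\qq (-)$ shows that the $K_i$ assemble into a short exact sequence, and a $3 \times 3$-lemma then delivers short exactness of the $P_i$, completing the exactness check on nubs.

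For filtered colimits the same componentwise reduction applies, using that colimits in $\acal$ are formed on nubs and vertices. The vertex functor $\ecal^{-1}(-)$ commutes with all colimits, and the nub is obtained by a pullback whose other ingredients are $\ecal^{-1}\ocal_\fcal \otimes_\qq \ecal^{-1}(-)$ and the localisation map; finite limits such as pullbacks commute with filtered colimits in the Grothendieck abelian category of $\ocal_\fcal$-modules, so both nub and vertex of $g^{*}N$ commute with filtered colimits in $N$. The main subtlety I expect is the initial reduction step, tracking how the isomorphism condition on $\ecal^{-1}\beta$ interacts with forming kernels in $\hat{\acal}$; once that is pinned down, the remaining verifications are routine diagram chases.
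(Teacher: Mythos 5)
Your proof is correct, and the exactness argument is a genuinely different route from the paper's. The paper's proof cites a result of Greenlees (that the right derived functors of the torsion functor $\Gamma_h$ vanish, hence $\Gamma_h$ is exact) and composes with the exact inclusion $N \mapsto (N \to 0)$ into $\hat{\acal}$. You instead give a self-contained computation: identifying short exact sequences in $\acal$ with sequences that are short exact on nubs and vertices, observing the vertex of $g^*N$ is $\ecal^{-1}N$, and analysing the nub via the short exact sequence $0 \to K_i \to P_i \to N_i \to 0$ coming from the pullback over the surjective action map, finishing with the $3\times 3$ lemma. This has the advantage of not importing the nontrivial vanishing result for $R^*\Gamma_h$; the paper's approach is shorter but relies on that black box. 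One small inefficiency: having noted that $g^*$ is a right adjoint (hence left exact), you then re-derive the whole short exact sequence of pullbacks rather than only checking epimorphism preservation, but the full computation is correct and arguably cleaner. Your treatment of filtered colimits is essentially the same as the paper's (finite limits commute with filtered colimits in $\ocal_\fcal$-modules, and the other ingredients of the pullback visibly commute with filtered colimits).
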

\begin{proof}
The functor sending an $\ocal_\fcal$--module $N$ to $N \to 0$
in $\hat{\acal}$ is clearly exact.
By \cite[Proposition 20.3.4]{gre99}, we see that the
right derived functors of $\Gamma_h$ are all zero.
Hence $\Gamma_h$ is exact and $g^*$ is also exact.
Since $g^*$ is defined in terms of a forgetful functors,
tensor products and a pullback,
it must commute with filtered colimits.
\end{proof}

Notice that the evaluation functor
which sends an object of $d \acal$ to its vertex is also a \emph{left} adjoint.
The right adjoint to evaluation at the vertex
is the functor which sends $V \in \ch(\qq)$ to the object
$\id \co \ecal^{-1} \ocal_\fcal \otimes V \to \ecal^{-1} \ocal_\fcal \otimes V$.

Another curious feature about the category $d\acal$ is that
the nub of every object is a \textbf{Hausdorff module}, that is
the natural map of $\ocal_\fcal$-modules
\[
M \to \prod_{H} e_H M
\]
is injective, see \cite[Section 5.10]{gre99}.
Hence the cokernel of the natural map
$\oplus_{H} \qq[c_H] \to \ocal_\fcal$ cannot occur as the nub of
an element of $\acal$.

In general it is hard to construct left adjoints in to $d \acal$, as
the nubs must be Hausdorff and the structure map must be an isomorphism after
inverting $\beta$. Conversely, right adjoints into $d \acal$ are easier to construct as
we can land in the simpler categories $d \hat{\acal}$ and $d R_\bullet \leftmod$
and then apply $\Gamma_h$ or $\Gamma$. The reader should compare this behaviour with $d R_\bullet \leftmod$,
where the evaluation functors are all \emph{right} adjoints. That is, the functor which sends
$X=(M, \alpha, N, \gamma, U)$ to $M \in d \ocal_\fcal \leftmod$ is a right adjoint, as is
the functor which sends $X$ to $N \in d \ecal^{-1} \ocal_\fcal \leftmod$ or the functor which sends
$X$ to $U \in \ch(\qq)$.

\section{Dualisable objects of \texorpdfstring{$\acal(\torus)$}{A(T)}}\label{sec:duals}

In this section we introduce the class of dualisable objects of our category $\acal$
and characterise them as objects whose nub is finitely generated and projective, Proposition \ref{prop:dualfingenproj}.
We then use this characterisation to show that there is only a set of isomorphism
classes of dualisable objects, Corollary \ref{cor:dualisablesformaset}.
We construct an important collection of dualisable
objects called the wide spheres. We will use the
dualisable objects and wide spheres in the next section to
construct the desired monoidal model structure on $d \acal$.
The results of this section are stated in terms of $\acal$. They can
all be extended to categories with differential.

\begin{definition}
A object $A$ of $\acal$ is said to be \textbf{dualisable} if
for any $B \in \acal$ the canonical map
$F(A,S^0) \otimes B \to F(A,B)$ is an isomorphism.
The \textbf{dual} of an object $B$ is the object $DB := F(B,S^0)$.
\end{definition}

We may also define dualisable objects in $\ocal_\fcal$--modules and
(graded) $\qq$--modules. Recall that a graded $\qq$--module is
dualisable if and only if it is finite dimensional.
Equally an $\ocal_\fcal$--module is dualisable if it is
finitely generated and projective (such objects are retracts of
finite products of $\ocal_\fcal$).
Dualisable objects satisfy a number
of useful properties, we state some below for $\acal$, but the obvious
analogues hold for $\ocal_\fcal$--modules and (graded) $\qq$--modules.

\begin{lemma}
Let $A$ be a dualisable object of $\acal$. Then $DA$ is dualisable, $D(DA) \cong A$ and
$A$ is flat (that is, $- \otimes A$ is exact). For any $B$ and $C$ in $\acal$
we have a natural isomorphism
\[
F(B,A \otimes C) \cong F(B \otimes DA, C)
\]
\end{lemma}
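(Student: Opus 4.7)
The plan is to install the standard strong duality apparatus for $A$ and then deduce the four assertions formally. In any closed symmetric monoidal category, the dualisability condition as given here is equivalent to the existence of evaluation and coevaluation maps satisfying triangle identities; once such a dual pair is in hand, all four statements are mechanical.

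First, I would construct the evaluation $\epsilon \co DA \otimes A \to S^0$ as the map adjoint to $\id_{DA} \co DA \to F(A,S^0)$ under the tensor--hom adjunction. For the coevaluation $\eta \co S^0 \to A \otimes DA$, I would apply the defining dualisability isomorphism with $B=A$ to obtain $DA \otimes A \cong F(A,A)$, then let $\eta$ be the composite of the unit $S^0 \to F(A,A)$ (adjoint to the unit isomorphism $S^0 \otimes A \cong A$), the inverse of $DA \otimes A \cong F(A,A)$, and the braiding to rearrange to $A \otimes DA$. I would then verify the two triangle identities
\[
(\epsilon \otimes \id_A) \circ (\id_A \otimes \eta) = \id_A,
\qquad
(\id_{DA} \otimes \epsilon) \circ (\eta \otimes \id_{DA}) = \id_{DA}
\]
by a direct diagram chase through the adjunctions; tracking the image of each composite under the Yoneda embedding reduces them to naturality of the unit--counit of $(-\otimes A) \dashv F(A,-)$.

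With $(A, DA, \eta, \epsilon)$ now a strong dual pair, the remaining assertions fall out. Precomposing $\eta$ with the braiding and postcomposing $\epsilon$ with the braiding produces a strong dual pair $(DA, A)$, proving that $DA$ is dualisable with dual $A$; in particular $D(DA) = F(DA, S^0) \cong A$. For flatness of $A$, the functor $A \otimes -$ is left adjoint to $F(A,-)$ and is therefore right exact. Dually, since $DA$ is dualisable with dual $A$, we have a natural isomorphism $F(DA,-) \cong A \otimes -$, and $F(DA,-)$ is a right adjoint (to $- \otimes DA$), so $A \otimes -$ is also left exact. Finally, the natural isomorphism follows from the same identity together with the tensor--hom adjunction,
\[
F(B, A \otimes C) \cong F(B, F(DA, C)) \cong F(B \otimes DA, C).
\]

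The main obstacle is the triangle identity verification, which though formal can be notationally cumbersome in an unfamiliar category; I would get around this by avoiding any model-specific manipulation of the nubs and vertices and using only the closed symmetric monoidal structure established in Section \ref{sec:monoidal}. Consequently the entire argument is a transcription of the standard proof that dualisable objects in a symmetric monoidal category form a symmetric monoidal subcategory closed under $D$, and no feature of $\acal$ beyond being closed symmetric monoidal is needed.
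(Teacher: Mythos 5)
Your proof is correct and takes essentially the same approach as the paper: the paper simply cites Lewis--May--Steinberger \cite[Section III.1]{lms86} for the duality formalism, which is exactly the evaluation/coevaluation apparatus you reconstruct from the defining isomorphism. Your flatness argument --- that $A\otimes -$ is right exact as a left adjoint and left exact because it is isomorphic to the right adjoint $F(DA,-)$ --- is word-for-word the one the paper gives.
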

\begin{proof}
Most of these statements are proven in Lewis, May and Steinburger
\cite[Section III.1]{lms86}.
To see that dualisable implies flat, we must prove that
$A \otimes -$ is an exact functor.
It is always right exact and it is isomorphic to
$F(DA, -)$, which is always left exact.
\end{proof}

Following Hovey \cite{hov04}, we make the following definition.
Other sources call modules satisfying this
condition small or compact.
\begin{definition}
We say that an object $X$ of a category $\ccal$ is
\textbf{finitely presented} if the
functor $\ccal(X,-)$ commutes with filtered colimits.
\end{definition}
It is a standard result that a module over a ring $R$ is finitely
presented if and only if it is the cokernel of a
map of free $R$-modules of finite rank.
In particular a $\qq$--module is finitely presented
if and only if it has finite dimension.

\begin{ex}
The algebraic sphere $S^\nu$
is finitely presented. In particular,
$S^0 = (\ocal_\fcal \to \ecal^{-1} \ocal_\fcal)$
is finitely presented.
\end{ex}

The ring $\ecal^{-1} \ocal_\fcal$ is the filtered colimit over functions
$\nu \co \fcal \to \zz_{\geqslant 0}$ with finite support of
$\Sigma^{\nu} \ocal_\fcal$.
Indeed, for any $\ocal_\fcal$--module $M$,
$\ecal^{-1} M$ is the filtered colimit over $\nu$ of
$\Sigma^{\nu} M$.
Hence, if $N$ is a finitely presented  $\ocal_\fcal$--module, then
\[
\hom_{\ocal_\fcal}(N, \ecal^{-1} M)
\cong \colim_\nu \Sigma^{\nu} \hom_{\ocal_\fcal}(N, M)
\cong \ecal^{-1} \hom_{\ocal_\fcal}(N, M)
\]
Recall that a finitely generated projective module
is finitely presented.
These two facts allow us to prove the following analogue
of \cite[Propositions 1.3.2, 1.3.3 and 1.3.4]{hov04}.
The first states that if $A \in \acal$ is nice, then
$F(A,-)$ is much simpler to describe.

\begin{proposition}\label{prop:specialfunction}
Let $A=(\beta \co N \to \ecal^{-1} \ocal_\fcal \otimes U)$
and $B=(\beta' \co N' \to \ecal^{-1} \ocal_\fcal \otimes U')$
be objects of $\acal$ and assume that the nub of
$A$ is finitely presented and has no $\ecal$--torsion.
Then $F(A,B)$ is isomorphic to
\[
\hom_{\ocal_\fcal}(N, N') \longrightarrow
\ecal^{-1} \hom_{\ocal_\fcal}(N, N') \cong
\ecal^{-1} \ocal_\fcal \otimes \hom_\qq(U,U')
\]
\end{proposition}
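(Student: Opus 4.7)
The plan is to unpack $F(A,B) = \Gamma_h F_{\hat{\acal}}(A, B)$ using the explicit pullback description of the internal function object in $d\hat{\acal}$ given in the example in Section \ref{sec:monoidal}, simplify the cospan using the hypotheses on $A$, and observe that the result is already in $\acal$ so that $\Gamma_h$ acts trivially.

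As a preliminary, I would show that $U$ is finite-dimensional. Because $N$ is finitely generated and $\ecal^{-1}\beta$ is an isomorphism, $\ecal^{-1}\ocal_\fcal \otimes U$ is finitely generated as an $\ecal^{-1}\ocal_\fcal$--module; picking a finite generating set exhibits a finite-dimensional $U_0 \subseteq U$ for which $\ecal^{-1}\ocal_\fcal \otimes U_0 \to \ecal^{-1}\ocal_\fcal \otimes U$ is surjective. The unit $\qq \to \ecal^{-1}\ocal_\fcal$ is a split injection of $\qq$--modules, so tensoring the sequence $0 \to U_0 \to U \to U/U_0 \to 0$ over $\qq$ with $\ecal^{-1}\ocal_\fcal$ forces $U = U_0$.

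Set $V := \hom_{\ocal_\fcal}(N, N')$. Because $N$ is finitely presented, the identity $\hom_{\ocal_\fcal}(N, \ecal^{-1} M) \cong \ecal^{-1}\hom_{\ocal_\fcal}(N, M)$ noted earlier (applied with $M = N'$), combined with the isomorphism $\ecal^{-1}\beta'$, identifies the lower vertex $\hom_{\ocal_\fcal}(N, \ecal^{-1}\ocal_\fcal \otimes U')$ of the pullback cospan with $\ecal^{-1} V$, and identifies the left leg $(\beta')_*$ with the localization map $V \to \ecal^{-1} V$. For the right leg, the finite-dimensionality of $U$ gives
\[
\ecal^{-1}\ocal_\fcal \otimes \hom_\qq(U, U') \;\cong\; \hom_{\ecal^{-1}\ocal_\fcal}(\ecal^{-1}\ocal_\fcal \otimes U, \ecal^{-1}\ocal_\fcal \otimes U'),
\]
and applying the isomorphism $\ecal^{-1}\beta$ followed by the extension-of-scalars adjunction $\hom_{\ecal^{-1}\ocal_\fcal}(\ecal^{-1} N, -) \cong \hom_{\ocal_\fcal}(N, -)$ on $\ecal^{-1}\ocal_\fcal$--modules shows that the whole right leg is an isomorphism $\ecal^{-1}\ocal_\fcal \otimes \hom_\qq(U, U') \to \ecal^{-1} V$.

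Consequently the pullback collapses: its nub is $V$ and its structure map is the localization $V \to \ecal^{-1} V \cong \ecal^{-1}\ocal_\fcal \otimes \hom_\qq(U, U')$, which is precisely the arrow displayed in the proposition. Because inverting $\ecal$ on this structure map yields an isomorphism (localization is idempotent), the object already lies in $\acal$, so $\Gamma_h$ leaves it unchanged by fully-faithfulness of the inclusion $\inc : d\acal \to d\hat{\acal}$. I expect the main obstacle to be the bookkeeping in the third paragraph: one must carefully verify that under this chain of identifications the right leg of the cospan really is the composite isomorphism described, so that the pullback genuinely computes the localization map rather than some twisted variant coming from the way $\beta$ interacts with the canonical map $\ecal^{-1}\ocal_\fcal \otimes \hom_\qq(U,U') \to \hom_\qq(U, \ecal^{-1}\ocal_\fcal \otimes U')$.
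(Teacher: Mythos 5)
Your proposal is correct and follows essentially the same route as the paper: both establish that $U$ is finite--dimensional, compute the cospan of the pullback defining $F_{\hat{\acal}}(A,B)$ using the finitely--presented hypothesis and the diagonal isomorphism, observe that the whole right leg is an isomorphism so the pullback is just $\hom_{\ocal_\fcal}(N,N')\to\ecal^{-1}\hom_{\ocal_\fcal}(N,N')$, and conclude that $\Gamma_h$ acts as the identity because the object already lies in $\acal$. The only cosmetic difference is that you invoke the extension--of--scalars adjunction to see $\beta^*$ gives an isomorphism onto $\hom_{\ocal_\fcal}(N,\ecal^{-1}N')$, whereas the paper attributes that same isomorphism to $N$ having no $\ecal$--torsion; both reach the identical conclusion.
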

\begin{proof}
Since $N$ is finitely presented, there is a surjection
from some finite sum of copies of $\ocal_\fcal$ to $N$.
Hence we have a surjection from some finite sum of copies of
$\ecal^{-1} \ocal_\fcal$ to $\ecal^{-1} N$
and thus a surjection to $\ecal^{-1} \ocal_\fcal \otimes U$.
It follows that $U$ must be finite dimensional.
Hence the diagonal map
\[
\ecal^{-1} \ocal_\fcal \otimes \hom_\qq(U,U') \longrightarrow
\hom_{\ocal_\fcal}(\ecal^{-1} \ocal_\fcal \otimes U, \ecal^{-1} \ocal_\fcal \otimes U')
\]
is an isomorphism. The target of this map is isomorphic (using $\beta$ and $\beta'$) to
the domain of the map below, which is induced by
$\alpha \co N \to \ecal^{-1} N$.
\[
\alpha^* \co \hom_{\ocal_\fcal}(\ecal^{-1} N, \ecal^{-1} N')
\longrightarrow
\hom_{\ocal_\fcal}(N, \ecal^{-1} N')
\]
Since $N$ has no $\ecal$--torsion,
the above map is in fact an isomorphism.
The module $N$ is also finitely presented, so we see that the codomain of the above is naturally isomorphic to
$\ecal^{-1} \hom_{\ocal_\fcal}(N, N')$.
Hence we have specified an isomorphism
\[
\ecal^{-1} \ocal_\fcal \otimes \hom_\qq(U,U')
\longrightarrow
\ecal^{-1} \hom_{\ocal_\fcal}(N, N').
\]
Recall that the definition of $F(A,B)$ is given in terms of a pullback
over the diagonal map and the map $\alpha^*$. It follows that
$F(A,B)$ is given by applying the torsion functor to the map
\[
\hom_{\ocal_\fcal}(N, N') \longrightarrow
\hom_{\ocal_\fcal}(N, \ecal^{-1} N') \longrightarrow
\ecal^{-1} \hom_{\ocal_\fcal}(N, N').
\]
The codomain of the above map is  isomorphic to
$\ecal^{-1} \ocal_\fcal \otimes \hom_\qq(U,U')$,
hence the torsion functor has no effect and the result is proven.
\end{proof}

The next result relates the notion of being finitely presented in the category of $\ocal_\fcal$--modules to the notion of being finitely presented in $\acal$.

\begin{proposition}
If $F(A,-)$ preserves filtered colimits
in $\acal$, then $A$ is finitely presented.
If $A \in \acal$ is finitely presented in $\acal$,
then its nub is finitely presented  in
the category of $\ocal_\fcal$--modules.
Conversely, if the nub of $A$ is finitely presented in
the category of $\ocal_\fcal$--modules and has no $\ecal$--torsion,
then $A$ is finitely presented in $\acal$.
\end{proposition}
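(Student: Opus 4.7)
The plan is to prove each of the three implications separately; the first and third are most cleanly handled through the internal function object $F$, while the second uses the nub-forgetting adjunction $g_* \dashv g^*$.

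For the first implication, I would use the unit $S^0$ of the monoidal product together with the tensor-hom adjunction:
\[
\acal(A, B) \cong \acal(S^0 \otimes A, B) \cong \acal(S^0, F(A, B)).
\]
The Example preceding Proposition \ref{prop:specialfunction} records that $S^0$ is finitely presented, so $\acal(S^0, -)$ preserves filtered colimits. Composing with $F(A, -)$, which preserves filtered colimits by assumption, yields that $\acal(A, -)$ does too, which is the definition of $A$ being finitely presented.

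For the second implication, I would appeal to the adjunction $g_* \co \acal \adjunct \ocal_\fcal \leftmod \co g^*$ and the lemma that $g^*$ preserves filtered colimits. For any filtered diagram $\{M_i\}$ of $\ocal_\fcal$--modules, the chain
\[
\ocal_\fcal\leftmod(N, \colim_i M_i)
\cong \acal(A, g^* \colim_i M_i)
\cong \acal(A, \colim_i g^* M_i)
\cong \colim_i \acal(A, g^* M_i)
\cong \colim_i \ocal_\fcal\leftmod(N, M_i)
\]
uses the adjunction at the ends, the filtered-colimit preservation of $g^*$ in the second step, and the hypothesis that $A$ is finitely presented in the third. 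Hence the nub $N = g_* A$ is finitely presented as an $\ocal_\fcal$--module.

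For the third implication, my plan is to reduce to the first by exhibiting $F(A, -)$ as a filtered-colimit-preserving functor. The hypotheses match those of Proposition \ref{prop:specialfunction}, which provides the explicit description
\[
F(A, B) = \bigl( \hom_{\ocal_\fcal}(N, N') \longrightarrow \ecal^{-1}\ocal_\fcal \otimes \hom_{\qq}(U, U') \bigr);
\]
its proof also shows that $U$ must be finite-dimensional. Since filtered colimits in $\acal$ are computed on nub and vertex, and since both $\hom_{\ocal_\fcal}(N, -)$ and $\hom_\qq(U, -)$ preserve filtered colimits (as $N$ is finitely presented and $U$ is finite-dimensional), the functor $F(A, -)$ does as well. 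The first implication then completes the argument.

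The main obstacle is identifying the right auxiliary inputs rather than any substantive technical difficulty: the role of the no $\ecal$--torsion hypothesis in the third implication is precisely to unlock the explicit form of $F(A,-)$ in Proposition \ref{prop:specialfunction}. Without it, one would have to work directly with the torsion functor $\Gamma$ applied to a filtered colimit in $R_\bullet \leftmod$, which is considerably more delicate.
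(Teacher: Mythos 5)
Your proposal is correct and follows essentially the same route as the paper: the first implication is the same chain of isomorphisms through $\acal(S^0, F(A,-))$ using that $S^0$ is finitely presented, the second unwinds the $g_* \dashv g^*$ adjunction exactly as the paper intends (just written out more fully), and the third uses Proposition \ref{prop:specialfunction} to reduce to objectwise finite presentation on nub and vertex and then closes via the first implication, which the paper leaves implicit.
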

\begin{proof}
To prove the first statement we take some
filtered colimit diagram $B_i$ in $\acal$.
Then we have isomorphisms as below.
\[
\begin{array}{ccccl}
\colim_i \acal(A, B_i)
& \cong &
\colim_i \acal(S^0, F(A,B_i))
& \cong &
\acal(S^0, \colim_i F(A,B_i)) \\
& \cong &
\acal(S^0, F(A,\colim_i B_i))
& \cong &
\acal(A,\colim_i B_i)
\end{array}
\]
The second statement follows from the
fact that the functor $g^*$
from $\ocal_\fcal \leftmod$ to $\acal$
commutes with filtered colimits.

For the converse, let
$A=(\beta \co N \to \ecal^{-1} \ocal_\fcal \otimes U)$
with $N$ finitely presented as an $\ocal_\fcal$--module
and with no $\ecal$--torsion. As with the proof of the previous result, we see that $U$
is finite--dimensional.
Let $B_i$ be a filtered system of elements of $\acal$
with nubs $N_i$ and vertices $U_i$.
We must prove that the canonical map
\[
\colim_i  F(A, B_i)
\longrightarrow
F(A, \colim_i B_i)
\]
is an isomorphism in $\acal$.
This follows immediately from the description of $F(A, B_i)$ in
Proposition \ref{prop:specialfunction}
and the facts below.
\[
\begin{array}{rcl}
\colim_i \hom_{\ocal_\fcal}(N,  N_i)
& \cong &
\hom_{\ocal_\fcal}(N, \colim_i N_i) \\
\colim_i \hom_{\qq}(U,  U_i)
& \cong &
\hom_{\qq}(U, \colim_i U_i)
\end{array}
\]
\end{proof}

Using the previous two propositions,
we may give a characterisation of the dualisable
objects of $\acal$. Note that by the proof of
Proposition \ref{prop:specialfunction}.
The vertex of an object of $\acal$ whose nub
is finitely generated is finite--dimensional.

\begin{proposition}\label{prop:dualfingenproj}
An object $A$ of $\acal$ is dualisable if and only
if its nub is finitely generated and projective as an
$\ocal_\fcal$--module.
\end{proposition}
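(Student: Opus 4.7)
The plan is to prove the two implications separately, with the reverse following cleanly from Proposition~\ref{prop:specialfunction} and the forward from the observation that the nub functor is strong symmetric monoidal. The key structural fact is that, by the formula for $\otimes$ in $\acal$, the nub of $A \otimes B$ is $g_\ast(A) \otimes_{\ocal_\fcal} g_\ast(B)$ and $g_\ast(S^0) = \ocal_\fcal$; hence $g_\ast \co \acal \to \ocal_\fcal\leftmod$ is strong symmetric monoidal.

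For the forward direction I would use the standard fact that strong symmetric monoidal functors preserve dualisable objects together with their duals, evaluations and coevaluations. Hence if $A$ is dualisable in $\acal$, then $N = g_\ast(A)$ is dualisable in $\ocal_\fcal\leftmod$. The classical dual-basis characterisation of dualisable modules over a commutative (graded) ring then identifies $N$ as finitely generated and projective.

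For the converse I would work directly, using Proposition~\ref{prop:specialfunction}. Suppose $N$ is finitely generated and projective; then $N$ is finitely presented and, being a retract of a finite free $\ocal_\fcal$-module, has no $\ecal$-torsion. A surjection $\ocal_\fcal^k \twoheadrightarrow N$ yields, after inverting $\ecal$, a surjection onto $\ecal^{-1}\ocal_\fcal \otimes U$, forcing $U$ to be finite-dimensional (as in the opening lines of the proof of Proposition~\ref{prop:specialfunction}). Applying Proposition~\ref{prop:specialfunction} to an arbitrary $B = (\beta' \co N' \to \ecal^{-1}\ocal_\fcal \otimes U')$ identifies $F(A,B)$ as having nub $\hom_{\ocal_\fcal}(N,N')$ and vertex $\hom_\qq(U,U')$; in particular $DA = F(A,S^0)$ has nub $N^\vee := \hom_{\ocal_\fcal}(N,\ocal_\fcal)$ and vertex $U^\vee$. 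The canonical comparison $DA \otimes B \to F(A,B)$ is then the pair consisting of the natural maps $N^\vee \otimes_{\ocal_\fcal} N' \to \hom_{\ocal_\fcal}(N,N')$ and $U^\vee \otimes_\qq U' \to \hom_\qq(U,U')$, both of which are isomorphisms---the former by the dual-basis lemma applied to the finitely generated projective module $N$, and the latter by finite-dimensionality of $U$.

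The principal subtlety is the forward direction's appeal to the folklore fact that strong monoidal functors preserve duals. A more self-contained alternative proof of this direction would combine: (i) $F(A,-) \cong DA \otimes -$ is a left adjoint and so preserves filtered colimits, whence $A$ is finitely presented and $N$ is finitely presented as an $\ocal_\fcal$-module by the previous proposition; (ii) flatness of $N$ can be extracted from flatness of $A$ by tensoring $A$ against the short exact sequence $0 \to S^0 \to \ecal^{-1}S^0 \to (\ecal^{-1}\ocal_\fcal / \ocal_\fcal \to 0) \to 0$ and inspecting nubs; and (iii) for a finitely presented module, flatness is equivalent to projectivity.
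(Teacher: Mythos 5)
Your proof is correct, and the forward direction is a genuinely different and cleaner route than the paper's. Both proofs handle the converse the same way (via Proposition~\ref{prop:specialfunction}: finite presentation and no $\ecal$-torsion give the explicit form of $F(A,B)$, and the dual-basis lemma finishes it). For the forward direction, you observe that the nub functor $g_\ast$ is strong symmetric monoidal (visible from the formula for $\otimes$ in $\acal$, with $g_\ast(S^0) = \ocal_\fcal$), invoke the standard fact that strong symmetric monoidal functors preserve dualisable objects, and then use the classical classification of dualisable modules over a (graded) commutative ring. This gets you to ``$N$ is f.g.\ projective'' in one conceptual step. The paper instead works directly: from $F(A,-) \cong DA \otimes -$ it deduces that $N$ is finitely presented and has no $\ecal$-torsion, and then proves projectivity of $N$ by feeding arbitrary exact sequences $E$ of $\ocal_\fcal$-modules through the right adjoint $g^\ast$, using the identification $F(A, g^\ast E) \cong g^\ast \hom_{\ocal_\fcal}(N, E)$ (monoidality of $g_\ast$ again) and then arguing that $g^\ast\alpha$ surjective forces $\alpha$ surjective. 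Your main argument is shorter and more conceptual; the paper's is more hands-on and keeps everything internal to the categories at hand. Note, though, that your sketched ``alternative'' route via flatness has a gap at step (ii): flatness of $A$ in $\acal$ controls tensoring against objects of $\acal$, not against arbitrary $\ocal_\fcal$-modules, so ``inspecting nubs'' after tensoring against one short exact sequence in $\acal$ will not directly yield flatness of $N$ over $\ocal_\fcal$ --- this is exactly why the paper's argument routes through $g^\ast$. Since your principal argument does not depend on this alternative, the proposal as a whole stands.
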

\begin{proof}{
Let $A=(\beta \co N \to \ecal^{-1} \ocal_\fcal \otimes U)$
and $B=(\beta' \co N' \to \ecal^{-1} \ocal_\fcal \otimes U')$.
Assume that $N$ is finitely generated and projective as
an $\ocal_\fcal$--module. Then $N$ is finitely presented and has no $\ecal$--torsion, hence
\[
F(A,B) = (\hom_{\ocal_\fcal}(N, N') \longrightarrow
\ecal^{-1} \ocal_\fcal \otimes \hom_\qq(U,U')
)
\]
Since $N$ and $U$ are finitely generated and projective, it follows that
they are dualisable. Hence
$F(A,B)$ is isomorphic to the map below.
But that is simply $DA \otimes B$, so $A$ is dualisable.
\[
\hom_{\ocal_\fcal}(N, \ocal_\fcal) \otimes_{\ocal_\fcal} N'
\longrightarrow
\ecal^{-1} \ocal_\fcal \otimes \hom_\qq(U,\qq) \otimes U'
\]

For the converse, assume that $A$ is dualisable.
The functor $F(A,-)$ commutes with colimits as it is isomorphic
to $DA \otimes -$.
Hence the nub of $A$ is finitely presented. Furthermore, $A$ is flat,
so the nub of $A$ cannot have any $\ecal$--torsion.
We must now prove that the nub of $A$ is projective.

Let $E$ be an exact sequence in $\ocal_\fcal$--modules.
Recall the functor $g^* \co \ocal_\fcal \leftmod \to \acal$
from the previous section.
We have shown that this functor is exact, so $g^* E$ is an
exact sequence in $\acal$.
The sequence $F(A,g^* E)$ is isomorphic to
$DA \otimes g^* E$, hence both these sequences are exact.
We also see that $F(A,g^* E)$ is isomorphic to
$g^* \hom_{\ocal_\fcal}(N,E)$ since the left adjoint
to $g^*$ is monoidal.
Now we must show that $\hom_{\ocal_\fcal}(N,E)$ is an exact
sequence. This amounts to proving that if
$\alpha \co X \to Y$ is a map of $\ocal_\fcal$--modules,
such that $g^* \alpha$ is a surjection, then
$\alpha$ itself is a surjection.
But this follows by looking at the pullback
diagrams defining $g^* \alpha$ and noting that the
map from the nub of $g^*Y$ to $Y$ is a surjection.
Thus we conclude that the sequence $\hom_{\ocal_\fcal}(N,E)$ is exact and
hence $N$ is projective.
}
\end{proof}

Note further that the structure map of any dualisable
object is injective as the nub has no $\ecal$--torsion.

\begin{corollary}\label{cor:dualisablesformaset}
The collection of isomorphism classes of
dualisable objects is a set.
\end{corollary}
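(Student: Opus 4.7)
The plan is to bound the class of dualisable objects by a set built from three pieces of data: the nub $N$, the vertex $U$, and the structure map $\beta$. By Proposition \ref{prop:dualfingenproj}, a dualisable object $A = (\beta \co N \to \ecal^{-1}\ocal_\fcal \otimes U)$ has $N$ finitely generated and projective over $\ocal_\fcal$. Any such $N$ is a retract of $\ocal_\fcal^n$ for some $n \geqslant 0$, so is determined up to isomorphism by an idempotent endomorphism of $\ocal_\fcal^n$. Since $\bigcup_n \mathrm{End}_{\ocal_\fcal}(\ocal_\fcal^n)$ is a set, the isomorphism classes of such $N$ form a set.

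Next I would show that the vertex $U$ is forced to be a \emph{bounded} graded $\qq$-module with finite total dimension. Because $\ecal^{-1}\beta$ is an isomorphism, $\ecal^{-1}\ocal_\fcal \otimes_\qq U \cong \ecal^{-1}N$ as graded $\ecal^{-1}\ocal_\fcal$-modules. The right hand side is a finitely generated $\ecal^{-1}\ocal_\fcal$-module (being the localisation of the finitely generated module $N$), so the left hand side is finitely generated as well. Since $\ecal^{-1}\ocal_\fcal$ lives in nonpositive degrees together with a tail in positive even degrees, a direct inspection forces $U$ to be concentrated in finitely many degrees and to be finite dimensional in each of those degrees. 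Such graded $\qq$-modules are classified up to isomorphism by a finitely supported function $\zz \to \zz_{\geqslant 0}$, so they also form a set.

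Finally, for each such pair $(N,U)$, the set of structure maps $\beta \in \hom_{\ocal_\fcal}(N, \ecal^{-1}\ocal_\fcal \otimes U)$ is an honest set. Consequently the collection of triples $(N, U, \beta)$ defining a dualisable object is indexed by a set, and since every isomorphism class of dualisable objects is represented by such a triple, the collection of isomorphism classes of dualisable objects is a set.

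I expect no serious obstacle: the only slightly delicate point is confirming that the localisation isomorphism $\ecal^{-1}\ocal_\fcal \otimes U \cong \ecal^{-1}N$ genuinely controls $U$ in the graded sense (both bounded support and pointwise finite dimension). This is where a careful look at the explicit structure of $\ecal^{-1}\ocal_\fcal$ described in Section \ref{sec:the model} is needed, but it is a direct computation rather than a conceptual difficulty.
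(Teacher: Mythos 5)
Your argument is correct, and it reaches the conclusion by a route that differs in emphasis from the paper's. You separately bound each of the three pieces of data: the nub $N$ is a retract of a finite free $\ocal_\fcal$-module (hence classified by idempotents, a set); the vertex $U$ is forced to be finite dimensional because $\ecal^{-1}\ocal_\fcal \otimes U \cong \ecal^{-1}N$ is finitely generated (this mirrors the observation the paper makes in the proof of Proposition \ref{prop:specialfunction} and records just before the corollary); and the hom-set $\hom_{\ocal_\fcal}(N, \ecal^{-1}\ocal_\fcal \otimes U)$ is a set for each fixed pair $(N,U)$. The paper instead leans only on the fact that $N$ has no $\ecal$-torsion, so $\beta$ is a monomorphism; it then replaces $A$ up to isomorphism by an honest inclusion of a submodule $N' \subseteq \ecal^{-1}\ocal_\fcal \otimes U$, so that for fixed $U$ the submodules (and with them the structure maps) already run over a set, and one then quotients by isomorphisms of $U$. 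The paper's argument is slicker in that it bounds $N$ and $\beta$ in one stroke by working with submodules of a fixed object, and it needs only flatness (no $\ecal$-torsion), not the full strength of "finitely generated projective"; your version uses more of Proposition \ref{prop:dualfingenproj} but is arguably more transparent and, in particular, makes the finite-dimensionality of $U$ explicit rather than leaving it implicit, which tightens up a step the paper glosses over (the bare assertion that "the collection of isomorphism classes of graded $\qq$-modules forms a set" is literally false without the finite-dimensionality restriction you supply).
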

\begin{proof}
Consider some dualisable object
\[
\beta \co N \to \ecal^{-1} \ocal_\fcal \otimes U
\]
of $\acal$.
We know that $N$ has no $\ecal$-torsion, so $\beta$
is a monomorphism. It is simple to check that this
object is isomorphic to an inclusion of $\ocal_\fcal$-modules:
\[
\beta' \co N' \to \ecal^{-1} \ocal_\fcal \otimes U
\]
For fixed $U$, there is only a set of such inclusions. Hence,
up to isomorphism in $\acal$, there is only a set of objects of $\acal$
with vertex $U$.

Now note that if $\phi \co U \to U'$ is an isomorphism of
graded $\qq$-modules, then $\beta'$ and $(\id \otimes \phi) \circ  \beta'$
are isomorphic objects of $\acal$.
The collection of isomorphism classes of graded $\qq$-modules forms a set.
Hence the collection of isomorphism classes of objects of $\acal$ forms a set.
\end{proof}

\begin{definition}\label{def:setofreps}
We let $\pcal$ denote a set of representatives for the
isomorphisms classes of dualisable objects.
\end{definition}

We need to introduce a special and useful collection of dualisable objects of
$\acal$, which are only slightly more complicated than the
algebraic spheres.
We will use this class in the proof of Theorem \ref{thm:modelstructure}.

\begin{definition}\label{def:widesphere}
A \textbf{wide sphere} is an object
$S \to \ecal^{-1} \ocal_\fcal  \otimes T$
with $T$ a finitely generated vector space on elements $t_1, \dots, t_d$.
The nub $S$ is the $\ocal_\fcal$ submodule of
$\ecal^{-1} \ocal_\fcal  \otimes U$
generated by elements
$c^{a_1} \otimes t_1, \dots, c^{a_d} \otimes t_d$
for Euler classes $c^{a_i}$ and
an element $\Sigma_{i=1}^d \sigma_i \otimes t_i$ of
$\ecal^{-1} \ocal_\fcal  \otimes T$.
\end{definition}

The reason we study wide spheres is that they are flat and
there are enough wide spheres in the sense that
any object in $\acal$ admits a surjection
from a coproduct of wide spheres, see
\cite[Lemma 22.3.4]{gre99}.
It follows that they can be used to define a derived monoidal product.
We reproduce the proof that there are enough wide spheres.

Take another object $A= (\beta \co N \to \ecal^{-1} \ocal_\fcal \otimes U)$.
We want to show that for any $n \in N$ or any $u \in U$ there is
a wide sphere and a map to $A$ such that $n$ or $u$
is in the image of this map. Since $\ecal^{-1} \beta$ is
an isomorphism, it suffices to only consider elements of the nub.

So consider $n \in N$ with
$\beta(n) = \Sigma_{i=1}^d \sigma_i \otimes u_i$.
For each $i$ there is an element $p_i \in N$ with
$\beta(p_i) = c^{b_i} \otimes u_i$ for $b_i$ function
with finite support. We may assume that we have chosen the
$b_i$ so that
$\sigma_i c^{b_1 + \dots +b_d}/c^{b_i} \in \ocal_\fcal$.
We can always multiply the $c^{b_i}$ by some Euler class
so that this holds. Now we must find another Euler class.
We know that
\[
\beta \left(
\Sigma_{i=1}^d
\sigma_i c^{b_1 + \dots +b_d}c^{-b_i} \cdot p_i
\right)
=
\Sigma_{i=1}^d
\sigma_i c^{b_1 + \dots +b_d} \otimes u_i
=
\beta \left( c^{b_1 + \dots +b_d} \cdot n \right)
\]
Since $\ecal^{-1} \beta$ is an isomorphism,
there must be some Euler class $c^b$ such that
\[
\Sigma_{i=1}^d
c^b \sigma_i c^{b_1 + \dots + b_d}c^{-b_i} \cdot p_i
=
c^b c^{b_1 + \dots + b_d} \cdot n.
\]
Now we can define our wide sphere.
Let $T$ be the subspace of $U$ generated by
the elements $u_i$.
Let $S$ be the $\ocal_\fcal$--submodule of
$\ecal^{-1} \ocal_\fcal \otimes V$ generated
by the elements $\Sigma_{i=1}^d \sigma_i \otimes u_i$
and $c^{b+b_1+ \dots +b_d} \otimes u_i$.
The structure map is the inclusion and it is clearly
an isomorphism after inverting $\ecal$.

We are ready to describe our desired map from this wide sphere to $A$.
On the nub it sends
$\Sigma_{i=1}^d \sigma_i \otimes u_i$ to $n$ and
$c^{b+b_1+ \dots +b_d} \otimes u_i$ to
$c^{b+b_1+ \dots +b_d}c^{-b_i} \cdot p_i$.
On the vertex it is the inclusion.
It is a useful exercise to check that
this defines a map in $\acal$ from the wide sphere to $A$.
The Euler classes $c^{b_i}$ and $c^b$ are needed to ensure that
the non--trivial relation between
$\Sigma_{i=1}^d \sigma_i \otimes u_i$ and the terms
$c^{b+b_1+ \dots +b_d} \otimes u_i$ in the nub of the wide sphere
is replicated by their images in $N$.

\begin{lemma}\label{lem:wideduals}
Any wide sphere is dualisable.
\end{lemma}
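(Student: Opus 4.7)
The plan is to apply Proposition \ref{prop:dualfingenproj}: it suffices to show that the nub $S$ of a wide sphere is finitely generated and projective as an $\ocal_\fcal$-module. Finite generation is built directly into Definition \ref{def:widesphere} (there are $d+1$ specified generators), so the entire problem reduces to exhibiting $S$ as a projective $\ocal_\fcal$-module. Since $\ocal_\fcal$ is not Noetherian, one cannot invoke structure theorems directly, so the strategy will be to decompose $S$ using the idempotents $e_H$ and reduce to the well-behaved PID situation factor by factor.

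Concretely, I would partition $\fcal$ into two pieces. Let $\fcal'' \subset \fcal$ be the \emph{finite} set consisting of the union of the supports of the functions $a_1,\dots,a_d$ together with the supports of Euler classes $c^{b_i}$ chosen so that each $\sigma_i$ can be written $c^{-b_i} x_i$ with $x_i \in \ocal_\fcal$. Set $\fcal' = \fcal \setminus \fcal''$ and write $e' = e_{\fcal'}$ and $e'' = e_{\fcal''}$. Since $1 = e' + e''$ and $e' e'' = 0$, the module $S$ splits as $S = e'S \oplus e''S$ of $\ocal_\fcal$-modules. Both $e'\ocal_\fcal$ and $e''\ocal_\fcal$ are direct summands of $\ocal_\fcal$, so any free module over either of these rings (with its induced $\ocal_\fcal$-structure) is $\ocal_\fcal$-projective; the same holds for each individual $e_H\ocal_\fcal = \qq[c_H]$.

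On the $\fcal'$-piece the generators $c^{a_i} \otimes t_i$ collapse to $e' \otimes t_i$ because $e' c^{a_i} = e'$, and by the choice of $\fcal''$ each $e' \sigma_i$ lies in $e' \ocal_\fcal$, so $e' \sigma$ is an $e'\ocal_\fcal$-linear combination of the $e' \otimes t_i$; hence $e' S$ is free of rank $d$ over $e'\ocal_\fcal$. On the $\fcal''$-piece, since $\fcal''$ is finite the ring $e''\ocal_\fcal$ is a finite product of the $\qq[c_H]$ and $e''S$ splits further as $\bigoplus_{H \in \fcal''} e_H S$; each $e_H S$ is a finitely generated submodule of the torsion-free $\qq[c_H]$-module $\qq[c_H, c_H^{-1}] \otimes T$, hence free of finite rank by the structure theorem over the PID $\qq[c_H]$. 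Assembling the pieces displays $S$ as a direct sum of $\ocal_\fcal$-projective modules, and Proposition \ref{prop:dualfingenproj} then gives dualisability.

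The main technical point I expect to have to justify carefully is that the chosen partition really does what is claimed: that on $\fcal'$ the mixed generator $\sigma$ genuinely lies in the $e'\ocal_\fcal$-span of the $e' \otimes t_i$ (so no new generator is needed there), and that the finite direct sum on $\fcal''$ really reassembles into the submodule $e''S$. Once these two checks are in place, everything else is standard bookkeeping with the idempotents $e_H$ and the PID structure of each $\qq[c_H]$.
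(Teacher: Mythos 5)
Your proof is correct, and the high-level strategy matches the paper: both reduce to Proposition \ref{prop:dualfingenproj} and show the nub is finitely generated (immediate from Definition \ref{def:widesphere}) and projective over $\ocal_\fcal$. The difference is that the paper dispatches projectivity with a bare citation to \cite[Lemma 23.3.3]{gre99}, whereas you supply the argument directly. Your idempotent decomposition is sound: since $\fcal''$ is chosen finite (containing the supports of the $a_i$ and of Euler classes $c^{b_i}$ with $c^{b_i}\sigma_i \in \ocal_\fcal$, which exist because $\ecal^{-1}\ocal_\fcal = \bigcup_\nu \ocal_\fcal(\nu)$), the piece $e''S$ splits as $\bigoplus_{H \in \fcal''} e_H S$, and each $e_H S$ is a finitely generated torsion-free graded module over the graded principal ideal domain $\qq[c_H]$, hence free; on the cofinite part, $e'c^{a_i} = e'$ and $e'\sigma_i \in e'\ocal_\fcal$, so $e'S$ is free over $e'\ocal_\fcal$ on the $e'\otimes t_i$. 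Since $e'\ocal_\fcal$ and each $e_H\ocal_\fcal$ are direct summands of $\ocal_\fcal$, the assembled $S$ is $\ocal_\fcal$-projective. Your version has the merit of being self-contained and of making the projective structure of the nub explicit --- a finite direct sum of shifted copies of idempotent pieces of $\ocal_\fcal$ --- which is hidden behind the paper's external citation.
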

\begin{proof}
The nub of a wide sphere is finitely generated by definition.
The nubs are projective by
\cite[Lemma 23.3.3]{gre99}, hence
the wide spheres are dualisable by
Proposition \ref{prop:dualfingenproj}.
\end{proof}

Note that any algebraic sphere $S^\nu$ is in particular
a wide sphere (and is dualisable).

\section{The dualisable model structure}\label{sec:dualisable}

In this section we complete our construction of a monoidal model structure on
$d \acal$, the main results are Theorems \ref{thm:modelstructure} and \ref{thm:monoidal}.
The inspiration for the method used comes from Barnes and Roitzheim \cite[Remark 6.11]{brfranke}.
The key facts are that we need enough dualisable objects (there
is a surjection from a coproduct of wide spheres to any object of $\acal$)
and that the collection of isomorphism classes of dualisable objects
forms a set. Both of these statements have been proven in the previous
section, so we are ready to construct our model structure.
Our starting point is to prove a general result on
the existence of model structures on $d \acal$ whose
weak equivalences are the homology isomorphisms.

\begin{proposition}\label{prop:smithmodel}
Let $I$ be a set of monomorphisms such that
the maps with the right lifting property with respect to $I$ are
homology isomorphisms. Then there is a cofibrantly generated
model structure
on $d \acal$ with weak equivalences
the homology isomorphisms
and $I$ as the set of generating cofibrations.
\end{proposition}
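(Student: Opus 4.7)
The plan is to invoke Jeff Smith's recognition theorem for cofibrantly generated model structures on locally presentable categories (as formulated, for instance, by Beke). The hypothesis on $I$-inj is precisely the non-formal input, so the remaining work is to check the structural hypotheses: that $d\acal$ is locally presentable, that the class $W$ of homology isomorphisms is accessible and well behaved, and that $\mathrm{cof}(I)\cap W$ is closed under pushouts and transfinite compositions.

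First I would show that $d\acal$ is locally presentable. The category $d R_\bullet\leftmod$ is a category of modules in chain complexes over a finite diagram of rings, hence a locally presentable Grothendieck abelian category. By the adjunction $\inc \dashv \Gamma$ of Section~\ref{sec:limits}, $d\acal$ is a reflective subcategory, and the reflector $\Gamma = \Gamma_h \Gamma_v$ is built from finite limits, forgetful functors, tensor products, and filtered colimits, so is accessible; hence $d\acal$ inherits local presentability. The explicit construction of limits and colimits in Section~\ref{sec:limits} also shows $d\acal$ is itself Grothendieck abelian: filtered colimits are computed objectwise on nubs and vertices, so monomorphisms are preserved by filtered colimits and by pushouts.

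Next I would verify the properties of $W$. The two-out-of-three property and closure under retracts are immediate, since homology is a functor. For accessibility, the homology of an object of $d\acal$ is read off from its nub and vertex, and both these operations commute with filtered colimits; hence $W$ is the preimage of the isomorphisms under an accessible functor, so is itself an accessible class of arrows. For the key closure condition, every map in $I$ is a monomorphism, so every map in $\mathrm{cof}(I)$ is a monomorphism. A map in $\mathrm{cof}(I)\cap W$ is therefore a monomorphism with acyclic cokernel, and both properties are stable under pushouts and transfinite compositions in the Grothendieck abelian category $d\acal$, since the cokernel of a pushout of a monomorphism agrees with the original cokernel.

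With these hypotheses in place, Smith's theorem produces the desired cofibrantly generated model structure, together with a set $J$ of generating trivial cofibrations supplied by the accessibility argument. The main obstacle I anticipate is the set-theoretic bookkeeping behind accessibility of $\Gamma$ and of $W$; since $\ocal_\fcal$ is non-Noetherian and $d\acal$ has no projectives, it is tempting to worry about cardinality issues, but the explicit description in Section~\ref{sec:limits} builds everything from accessible functors between locally presentable categories, so no further smallness argument should be required.
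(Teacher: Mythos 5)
Your overall strategy is the same as the paper's: invoke Smith's recognition theorem as formulated by Beke, reduce to verifying local presentability of $d\acal$ and the solution-set/accessibility condition on the homology isomorphisms, and observe that the remaining hypotheses of Smith's theorem come for free. Your treatment of accessibility of $W$ matches the paper's almost exactly (homology commutes with filtered colimits, isomorphisms are accessible in any locally presentable category). You also make explicit the closure of $\mathrm{cof}(I)\cap W$ under pushout and transfinite composition, which the paper leaves implicit; that step is correct and is in fact why the hypothesis that $I$ consist of monomorphisms is imposed.

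Where you genuinely diverge is in the local presentability argument, and there is a slip there worth fixing. You describe $d\acal$ as a \emph{reflective} subcategory of $d R_\bullet\leftmod$ with reflector $\Gamma$. But the adjunction in the paper runs $\inc \dashv \Gamma$ with $\inc$ full and faithful and the unit $A\to\Gamma\inc A$ an isomorphism, so $d\acal$ is \emph{coreflective}, not reflective. The standard Adámek--Rosický theorem that an accessible reflective localization of a locally presentable category is locally presentable does not apply on the nose; for a coreflective subcategory one needs a separate argument (for example, that $d\acal$ is cocomplete, which the paper establishes, and is accessible, which requires more than accessibility of the coreflector alone). The paper sidesteps this by a more concrete route: it cites Beke's Proposition 3.10, which says an abelian category is locally presentable if and only if it admits a set $\{G_i\}$ with $\acal(\oplus_i G_i,-)$ faithful, and it produces such a set from the wide spheres of \cite[Lemma 22.3.4]{gre99} (tensored with $D^1$ to pass to $d\acal$). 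Since the wide spheres are anyway central to the rest of the paper (they also give the surjection needed in Theorem~\ref{thm:modelstructure}), this is both cleaner and more self-contained than the coreflective-subcategory route. I would either repair the reflective/coreflective issue and supply the extra argument, or simply adopt the generator-based criterion.
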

\begin{proof}
We wish to use Smith's theorem, which appears as Beke \cite[Theorem 1.7]{beke00},
to construct model structures on $d \acal$.
That theorem uses some technical set--theoretic terms that we need to mention, but do not want to define.
They are the \textbf{solution set condition} as introduced in \cite[Definition 1.5]{beke00}
and the notions of
\textbf{locally presentable categories} and \textbf{locally accessible categories} as defined in
Borceux \cite[Sections 5.2 and 5.3]{bor94}.
Given our assumptions, we must prove that $d \acal$ is a locally presentable category,
and that the set of homology isomorphisms
of $d \acal$ satisfies the solution set condition.

By \cite[Proposition 3.10]{beke00} we see that
$\acal$ is locally presentable
if and only if it has a set of objects
$G_i$ such that $\acal(\oplus_i G_i,-)$ is a faithful functor from
$\acal$ to sets. Such a set exists by \cite[Lemma 22.3.4]{gre99}
which says that there are enough wide spheres
(see definition \ref{def:widesphere}).
We can extend this result to $d \acal$ by taking the set to be
the collection of wide spheres tensored with $D^1 \in \ch(\qq)$.
Hence $d \acal$ is locally presentable.

We need to know that the set of homology isomorphisms
of $d \acal$ satisfies the solution set condition.
By \cite[Propositions 1.15 and 3.13]{beke00}
we must prove that the homology isomorphisms are an
accessible category.
This follows from the following facts: the homology functor
$\h_* \co d \acal \to \acal$ commutes with
filtered colimits, the isomorphisms of $\acal$
are accessible (they are so in any locally presentable category)
and \cite[Proposition 1.18]{beke00}.
\end{proof}

Now we are ready to use the dualisable objects to make
our desired monoidal model structure on $\acal(\torus)$.
As is standard, we write $S^{n-1}$ for that object of $\ch(\qq)$ which is
$\qq$ concentrated in degree $n-1$ and $D^n$ for the chain complex
with $\qq$ in degrees $n$ and $n-1$ (with the identity as the differential).
We let $i_n$ denote the inclusion map from $S^{n-1}$ to $D^n$.
Recall $\pcal$ from Definition \ref{def:setofreps}, a set or representatives
of the isomorphism classes of dualisable objects.

\begin{theorem}\label{thm:modelstructure}
There is a cofibrantly generated model structure
on $d \acal$ with weak equivalences the
homology isomorphisms. The generating cofibrations
have the form
\[
i_n \otimes P \co S^{n-1} \otimes P \longrightarrow D^n \otimes P
\]
for $P \in \pcal$ and $n \in \zz$.
We call this model structure the \textbf{dualisable model structure}
and denote it $d \acal_{dual}$.
\end{theorem}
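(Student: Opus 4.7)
The strategy is to invoke Proposition \ref{prop:smithmodel} with $I = \{ i_n \otimes P : P \in \pcal, \ n \in \zz \}$. This reduces the theorem to two verifications: first that every element of $I$ is a monomorphism in $d \acal$, and second that every morphism with the right lifting property against $I$ is a homology isomorphism.

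The first verification is immediate. For $P \in \pcal$ with nub $N_P$ and vertex $U_P$, the map $i_n \otimes P$ has components on nub and vertex given by $i_n \otimes_\qq N_P$ and $i_n \otimes_\qq U_P$ respectively. Since $i_n$ is injective and we are tensoring over the field $\qq$, both components are injective, so $i_n \otimes P$ is a monomorphism in $d \acal$.

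The core step is the second verification. Using that $d \acal$ is tensored, cotensored and enriched over $\ch(\qq)$ as set up in Section \ref{sec:monoidal}, we have a natural isomorphism
\[
d \acal(P \otimes K, A) \cong \ch(\qq)(K, RF(P, A)) \cong \ch(\qq)(K, \acal(P, A)_*).
\]
Hence $f \co A \to B$ has the right lifting property against every $i_n \otimes P$ precisely when $\acal(P, f)_* \co \acal(P, A)_* \to \acal(P, B)_*$ has the right lifting property against every $i_n$ in $\ch(\qq)$, which in the standard projective model structure on $\ch(\qq)$ means being a surjective quasi-isomorphism. Thus the hypothesis on $f$ is equivalent to: for every $P \in \pcal$, the map $\acal(P, f)_*$ is a surjective quasi-isomorphism of chain complexes of $\qq$-modules.

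The main obstacle is deducing from this that $f$ is itself a homology isomorphism. Here we exploit the two main properties of $\pcal$ established in Section \ref{sec:duals}: by Lemma \ref{lem:wideduals} every wide sphere lies in $\pcal$, and by \cite[Lemma 22.3.4]{gre99} there are enough wide spheres, in the sense that every object of $\acal$ receives a surjection from a coproduct of them. Given a homology class on the nub or vertex of $B$, one realises it by a cycle-level map from a suitable wide sphere $P$ into an appropriate shift of $B$; the chain-level surjectivity of $\acal(P, f)_*$ lifts this to a map into $A$, yielding surjectivity of $H_*(f)$. Conversely, if a cycle in $A$ becomes a boundary in $B$, the corresponding cycle in $\acal(P, A)_*$ for a wide sphere $P$ realising it is sent to a boundary in $\acal(P, B)_*$, and the quasi-isomorphism property forces it to have already been a boundary in $\acal(P, A)_*$. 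Together these show $H_*(f)$ is an isomorphism on both nub and vertex, which is to say $f$ is a homology isomorphism in $d \acal$, completing the hypotheses of Proposition \ref{prop:smithmodel}.
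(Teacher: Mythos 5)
Your overall strategy is the paper's: apply Proposition \ref{prop:smithmodel} with $I = \{ i_n \otimes P \}$, translate the right lifting property against $I$ into the statement that $\acal(P,f)_*$ is a surjective quasi-isomorphism for each $P \in \pcal$ via the $\ch(\qq)$-enrichment, and then use wide spheres to conclude. However there are two genuine gaps in the execution. First, the phrase ``one realises it by a cycle-level map from a suitable wide sphere'' hides exactly the non-trivial content of the argument. The construction after Definition \ref{def:widesphere} produces a map of $\acal$ from a wide sphere hitting a given element of the nub, but this map need not commute with the differentials, i.e.\ need not be a cycle in the Hom-complex $\acal(P, -)_*$. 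Making it a cycle requires replacing the generating elements $p_i$ by $c^{r_i} p_i$ for suitable Euler classes $c^{r_i}$, using that $dp_i$ is $\ecal$-torsion; this adjustment is the crux of the paper's proof and cannot be omitted.

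Second, your injectivity step does not go through as stated. From ``a cycle $a$ in $A$ is sent to a boundary $f(a)=db$ in $B$'' you cannot conclude that a wide-sphere representative $\alpha \in \acal(P,A)_*$ of $a$ has image $f_*\alpha$ a boundary in $\acal(P,B)_*$: a single equation $f(\alpha(p)) = db$ does not supply a chain homotopy $h$ with $dh+hd = f_*\alpha$. The cleaner route (and the paper's) is to observe that RLP against $0 \to D^n \otimes P$ for all $P \in \pcal$, together with the wide-sphere surjection result, shows $f = (\theta,\phi)$ is componentwise surjective; the kernel $Z = (\ker\theta \to \ecal^{-1}\ocal_\fcal \otimes \ker\phi)$ then lies in $d\acal$, the complex $\acal(P,Z)_*$ is the fibre of $\acal(P,f)_*$ and hence acyclic for all $P$, and the (corrected) wide-sphere cycle argument shows $Z$ itself is acyclic, giving that $f$ is a homology isomorphism by the long exact sequence. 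Restructure the second verification along these lines to close the gap.
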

\begin{proof}
We have a set of generating cofibrations $I$ and
we must show that any map $f \co A \to B$
with the right lifting property
with respect to $I$ is a homology isomorphism.
We see that such a map must have the property that for any
dualisable object $P$, the induced map of chain complexes
\[
f_* \co \acal(P, X)_* \to \acal(P,Y)_*
\]
is a homology isomorphism.
In particular $f_* $ has the right lifting property with respect to
$0 \to D^n \otimes \qq$ for $n \in \zz$. In turn, $f$ has the
right lifting property with respect to any map of the form
\[
0 \to D^n \otimes P
\]
for $P \in \pcal$ and $n \in \zz$. By the discussion after Definition
\ref{def:widesphere}, it follows that $f$ must be surjective.
It follows that the homotopy fibre $Z$ of $f=(\theta,\phi)$ is given by
\[
(\ker \theta \to \ecal^{-1} \ocal_\fcal \otimes \ker \phi ) \in d \acal.
\]
The chain complex $\acal(P,Z)_*$ is precisely the homotopy fibre of
$f_*$ and hence is acylic.
We must show that this implies that $Z$ has trivial homology.
It suffices to show that any cycle $n$ in the nub of
$Z$ is also a boundary. By the discussion after Definition
\ref{def:widesphere} there is a map $\alpha$ from a wide sphere
$P$ to $Z$ with this cycle $n$ in its image.
We must also ensure that
$\alpha$ is a cycle in
$\acal(P,Z)_*$.
To do so, we follow the method of the previous section until
we come to choose the elements $p_i$. For each $i$, $d p_i$
must be $\ecal$--torsion, as each $u_i$ is a cycle.
So there is some Euler class $c^{r_i}$
with $c^{r_i} p_i$ a cycle. We can then use these elements
instead of the $p_i$ in the construction of the wide sphere.
With these choices, $\alpha$ is a cycle.
Since $\acal(P,Z)_*$ has no homology, $\alpha$ is a boundary,
hence so is $n$.
\end{proof}

There is also a relative projective model structure on $d \acal$, which has the
same cofibrations as the dualisable model structure,
but has generating acyclic cofibrations given by
$0 \to D^n P$ for $n \in \zz$ and $P \in \pcal$, see Definition \ref{def:setofreps}.
unfortunately the weak equivalences are not the homology isomorphisms,
so we will not use this model structure.
We note that one alternative approach to the above theorem would be to left Bousfield localise
the relative projective model structure at the class of homology isomorphisms.
The key step for that approach would be to find a set of maps $S$ such that the
$S$-equivalences are the homology isomorphisms. Since this is essentially
done for us by Smith's theorem, the alternate approach is unlikely to be quicker.

Recall the injective model structure of \cite{gre99} on $d \acal$ which we write
as $d \acal_i$. The cofibrations are the monomorphisms and the weak equivalences
are the quasi-isomorphisms.

\begin{corollary}
The identity functor from $d \acal_{dual}$
to $d \acal_i$ is the left adjoint of
a Quillen equivalence.
\[
\id
:
d \acal_{{dual}}
\adjunct
d \acal_{i}
:
\id
\]
\end{corollary}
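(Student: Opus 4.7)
The plan is to verify that the identity is a left Quillen functor, and then exploit the fact that the two model structures have the \emph{same} class of weak equivalences to conclude Quillen equivalence for free.

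First I would verify the Quillen adjunction. The weak equivalences in both $d\acal_{dual}$ and $d\acal_i$ are the homology isomorphisms (which in $d\acal_i$ are called quasi--isomorphisms, but amount to the same condition on the underlying chain complexes). So it suffices to show every cofibration in $d\acal_{dual}$ is a monomorphism, i.e.\ a cofibration in $d\acal_i$. It is enough to check this on the generators $i_n \otimes P$ for $P \in \pcal$ and $n \in \zz$. By Proposition \ref{prop:dualfingenproj}, the nub of $P$ is finitely generated and projective, hence has no $\ecal$--torsion, so the structure map of $P$ is injective (as remarked after that proposition). Since $i_n \co S^{n-1} \to D^n$ is a degreewise injection in $\ch(\qq)$, the induced map $i_n \otimes P$ is injective on both nub and vertex, hence a monomorphism in $d \acal$. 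Because the monomorphisms of $d \acal_i$ are closed under pushouts, transfinite composition, and retracts, every $d \acal_{dual}$--cofibration is a monomorphism.

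Next I would check that the identity is a Quillen equivalence. For a Quillen adjunction in which both functors are the identity, the equivalence condition reduces to: for every $X$ cofibrant in $d \acal_{dual}$ and every $Y$ fibrant in $d\acal_i$, a map $f \co X \to Y$ is a weak equivalence in $d \acal_{dual}$ if and only if it is one in $d \acal_i$. But the classes of weak equivalences in the two model structures coincide by construction (Theorem \ref{thm:modelstructure} versus Proposition \ref{prop:injmodelforA(T)}), so this biconditional is trivial.

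There is really no hard step here: once one knows that cofibrations in $d\acal_{dual}$ are monomorphisms, everything else is formal because the weak equivalences are the same. The only subtlety worth flagging is ensuring that the generating cofibrations are injective on the nub, which is exactly where the characterisation of dualisable objects as those with nub having no $\ecal$--torsion (Proposition \ref{prop:dualfingenproj}) does the work.
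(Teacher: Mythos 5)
Your proof is correct and takes essentially the same approach as the paper, which simply observes that the generating cofibrations of $d\acal_{dual}$ are monomorphisms and the weak equivalences coincide, whence the identity Quillen pair is an equivalence. One small remark: the detour through $\ecal$--torsion and injectivity of the structure map of $P$ is not actually needed to see that $i_n \otimes P$ is a monomorphism---the nub (resp.\ vertex) of $i_n \otimes P$ is $N \otimes_\qq i_n$ (resp.\ $U \otimes_\qq i_n$) where $N$ is the nub and $U$ the vertex of $P$, and tensoring over the field $\qq$ preserves injections for any $N$ and $U$, dualisable or not.
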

\begin{proof}
The generating cofibrations of $d \acal_{{dual}}$ are monomorphisms
and the weak equivalences are exactly the homology isomorphisms.
\end{proof}

\begin{lemma}\label{lem:LRquillen}
There is a symmetric monoidal Quillen pair
\[
L
:
\ch(\qq)
\adjunct
d \acal_{dual}
:
R
\]
where $L V = S^0 \otimes V$ and
$R A = \acal(S^0, A)_*$. Thus, $d \acal_{dual}$ is a closed
$\ch(\qq)$--model category.
Moreover, if we let $[-,-]_*^{\acal}$ denote maps
in the homotopy category of $d \acal_{dual}$
and assume that $A$ is cofibrant and $B$ is fibrant, then we have a natural isomorphism
\[
[A,B]_*^{\acal} \cong \h_*(\acal( A,  B)_*).
\]
\end{lemma}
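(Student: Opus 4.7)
The plan is to verify the Quillen adjunction by analysing $L$ on generators, upgrade to a symmetric monoidal Quillen pair via the evident strong monoidality of $L$, and then deduce the homotopy classes formula from standard enriched model category theory.

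First I verify the adjunction directly. A morphism $LV = S^0 \otimes V \to A$ is determined by its restriction to the generating copies of $V$ in the nub $\ocal_\fcal \otimes V$ and in the vertex $V$; unwinding the compatibility square and the differential convention shows this is equivalent to a chain map $V \to \acal(S^0, A)_* = RA$. Strong symmetric monoidality of $L$ follows from the formulas for the tensor product on nubs and vertices, yielding $L(V \otimes W) \cong LV \otimes LW$ and $L(\qq) = S^0$, the unit of $d\acal$.

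Next I show that $L$ is a left Quillen functor by tracking generators. The generating cofibrations $i_n : S^{n-1} \to D^n$ of the projective model structure on $\ch(\qq)$ map to $i_n \otimes S^0$, which are generating cofibrations of $d\acal_{dual}$ since $S^0 \in d\acal$ has nub $\ocal_\fcal$, hence is dualisable by Proposition \ref{prop:dualfingenproj}. For the generating acyclic cofibrations $0 \to D^n$, the image $0 \to S^0 \otimes D^n$ is acyclic (both the nub $\ocal_\fcal \otimes D^n$ and vertex $D^n$ are contractible) and is a cofibration via a two-step relative cell complex: pushing out $i_{n-1} \otimes S^0$ along $S^{n-2} \otimes S^0 \to 0$ produces $0 \to S^{n-1} \otimes S^0$, after which attaching $i_n \otimes S^0$ along the identity yields the full map. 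Combining this with the monoidal model structure of Theorem \ref{thm:monoidal}, and observing that $\qq$ is cofibrant in $\ch(\qq)$ so the unit axiom is automatic, upgrades $L \dashv R$ to a symmetric monoidal Quillen adjunction; this endows $d\acal_{dual}$ with the structure of a closed $\ch(\qq)$-model category in the sense of \cite[Section 4.2]{hov99}.

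For the final natural isomorphism, standard enriched model category theory (e.g.\ \cite[Theorem 4.3.2]{hov99}) gives $[A,B]^\acal_* \cong \h_*(RF(A,B))$ whenever $A$ is cofibrant and $B$ is fibrant. The tensor--hom adjunction in $d\acal$, together with $S^0$ being the monoidal unit, yields
\[
RF(A,B) = \acal(S^0, F(A,B))_* \cong \acal(S^0 \otimes A, B)_* \cong \acal(A,B)_*,
\]
which closes the chain. The step I expect to need the most care is showing that $0 \to S^0 \otimes D^n$ is a cofibration of $d\acal_{dual}$: the model structure is produced by Smith's theorem with no intrinsic description of cofibrations beyond ``retracts of relative cell complexes'', so the explicit cell attachment above is essential.
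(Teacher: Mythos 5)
Your proof is correct and fills in the details that the paper dismisses as ``routine to check''; the approach — verifying the adjunction, observing strong symmetric monoidality of $L$, checking $L$ preserves generating (acyclic) cofibrations, and then deriving the homotopy-class formula from the $\ch(\qq)$-enrichment via $RF(A,B) \cong \acal(A,B)_*$ — is exactly what the paper's terse proof implicitly invokes. The one minor point to flag is that $i_n \otimes S^0$ need only be isomorphic to a generating cofibration $i_n \otimes P_0$ (for the representative $P_0 \in \pcal$ in the isomorphism class of $S^0$), which of course suffices since cofibrations are closed under isomorphism.
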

\begin{proof}
It is routine to check that $(L,R)$ are a symmetric monoidal Quillen pair.
The statement about maps in the homotopy category follows from the
enrichment in $\ch(\qq)$.
\end{proof}

More generally,
if $i \co V \to V'$ is a cofibration
in $\ch(\qq)$ and $P$ is a dualisable object of $\acal$, then
$i \otimes P$ is a cofibration of $d \acal_{dual}$.
While we do not have explicit generating sets for the dualisable
model structure, it is quite well--behaved, as the following
two results show.

\begin{lemma}
If $(\theta, \phi)$ is a cofibration of $d \acal_{dual}$,
then $\theta$ (the map on the nubs) is a cofibration of
$d \ocal_\fcal \leftmod$.
If $A \in d \acal_{dual}$ is cofibrant, then
$A \otimes -$ preserves weak equivalences.
If $(\theta, \phi)$ is a fibration in $d \acal_{dual}$
then $\theta$ and $\phi$ are surjective.
\end{lemma}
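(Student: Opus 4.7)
For the first claim, I would exploit that the nub functor $g_*\co d\acal \to d\ocal_\fcal\leftmod$ is a left adjoint, hence preserves pushouts, transfinite compositions and retracts. Each generating cofibration $i_n\otimes P$ is sent to $i_n\otimes N_P$, where $N_P$ denotes the nub of the dualisable $P$; by Proposition \ref{prop:dualfingenproj}, $N_P$ is finitely generated and projective over $\ocal_\fcal$, so $i_n\otimes N_P$ is a cofibration in the projective model structure on $d\ocal_\fcal\leftmod$ (being a retract of a finite sum of copies of $i_n\otimes \ocal_\fcal$). Since every cofibration of $d\acal_{dual}$ is a retract of a transfinite composite of pushouts of generators, the conclusion for $\theta$ follows at once.

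For the second claim, I would run a cellular induction. Let $\mathcal{C}$ be the class of objects $A$ with $A\otimes Z$ acyclic for every acyclic $Z$; passing to mapping cones, membership in $\mathcal{C}$ is equivalent to $A\otimes-$ preserving homology isomorphisms. Every dualisable $P$ lies in $\mathcal{C}$ by the flatness observation in the excerpt, and the same then applies to $S^{n-1}\otimes P$ and $D^n\otimes P$ because $S^{n-1}$ and $D^n$ are flat in $\ch(\qq)$. Given $A\in\mathcal{C}$ and $A'$ formed by attaching a cell along $i_n\otimes P\co S^{n-1}\otimes P \to D^n\otimes P$, the monomorphism $i_n\otimes P$ pushes out to a monomorphism $A\to A'$ in the abelian category $d\acal$, with cokernel the cofiber $S^n\otimes P$; this gives a short exact sequence $0\to A\to A'\to S^n\otimes P\to 0$. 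Flatness of $S^n\otimes P$ keeps the sequence exact after tensoring with $Z$, and the long exact sequence in homology then forces $A'\in\mathcal{C}$. Closure of $\mathcal{C}$ under filtered colimits (as $-\otimes Z$ and $H_*$ both commute with them) and under retracts is routine. I expect this step to be the main obstacle, because it rests on the cofiber $S^n\otimes P$ being flat: this is precisely what the restriction to dualisable $P$ in the generating cofibrations buys, and it is the reason the analogous argument fails for the injective structure $d\acal_i$.

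For the third claim, I would identify a family of explicit acyclic cofibrations and invoke the wide sphere surjection. For each $P\in\pcal$ and $n\in\zz$, the object $D^n\otimes P$ is acyclic (contractible $D^n$ tensored with flat $P$), and $0\to D^n\otimes P$ factors as $0\to S^{n-1}\otimes P\to D^n\otimes P$, where the first arrow is a pushout of a generating cofibration along the zero map and the second is itself a generating cofibration; hence $0\to D^n\otimes P$ is an acyclic cofibration. A fibration $(\theta,\phi)\co A\to B$ therefore has the right lifting property against every such map, which translates to degreewise surjectivity of $\acal(P,A)_*\to \acal(P,B)_*$ for every dualisable $P$. Since wide spheres are dualisable by Lemma \ref{lem:wideduals} and, by the construction following Definition \ref{def:widesphere}, any element of the nub of $B$ is hit by a map from some wide sphere, lifting such a map to $A$ produces a preimage, so $\theta$ is surjective. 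Surjectivity of $\phi$ is then automatic: the commuting square of $(\theta,\phi)$ forces $\id\otimes\phi\co \ecal^{-1}\ocal_\fcal\otimes U \to \ecal^{-1}\ocal_\fcal\otimes U'$ to be surjective, and faithful flatness of the nonzero $\qq$-module $\ecal^{-1}\ocal_\fcal$ reflects this back to $\phi$.
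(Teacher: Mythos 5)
Your proof is correct, and the overall structure mirrors the paper's; the notable divergence is in the second claim. For the first claim, the paper also shows nubs of dualisable objects are cofibrant in $d\ocal_\fcal\leftmod$, but does so by directly solving a lifting problem against an acyclic fibration (using that $\hom_{\ocal_\fcal}(N_P,-)\cong DN_P\otimes -$ is exact), rather than your retract-of-free argument; both are correct and equally standard. For the third claim, your argument is the paper's argument with the details filled in, including the same appeal to the wide-sphere surjection; your faithful-flatness deduction of surjectivity of $\phi$ from surjectivity of $\theta$ is exactly how one makes the paper's reference to ``the discussion after Definition \ref{def:widesphere}'' precise.

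The real difference is the second claim. The paper dispatches it in one line: by the first part, the nub of a cofibrant $A$ is a cofibrant object of $d\ocal_\fcal\leftmod$, and cofibrant chain complexes of $\ocal_\fcal$-modules tensor exactly; since weak equivalences in $d\acal$ are detected on nubs, one is done. You instead run a full cellular induction on the class $\mathcal{C}$ of objects whose tensor product preserves acyclicity, using flatness of the cofiber $S^n\otimes P$ to keep the attaching short exact sequence exact after tensoring, then closing under filtered colimits and retracts. Your argument is longer but self-contained (it does not depend on the first claim, nor on the auxiliary adjunction $g_*\dashv g^*$) and would port to settings without such a forgetful functor; the paper's is more economical but leans on the transfer to $\ocal_\fcal$-modules. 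Both are sound.
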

\begin{proof}
We claim that if $P$ is the nub of a dualisable
object of $\acal$, then it is cofibrant as an object of
$d \ocal_\fcal \leftmod$.
Consider a lifting problem comparing $0 \to P$
with an acyclic fibration $f$ of
$d \ocal_\fcal \leftmod$.
We know that
$\hom_{\ocal_\fcal}(P,-) \cong DP \otimes -$
is an exact functor, so $\hom_{\ocal_\fcal}(P,f)$
is an acyclic fibration. Thus we can solve
the lifting problem and our claim is true.
Let $i_n \otimes P \co S^{n-1} \otimes P \to D^n \otimes P$,
be a generating cofibration of $d \acal_{dual}$.
Then since the nubs of the domain and codomain are cofibrant
$\ocal_\fcal$--modules, the map induced by $i_n \otimes P$ on nubs
is a cofibration of $\ocal_\fcal$--modules.
It follows that every every cofibration of $d \acal_{dual}$
is a cofibration on the nubs.

The second statement follows from the fact that
if $N$ is a cofibrant object of
$d \ocal_\fcal \leftmod$, then $- \otimes N$
preserves homology isomorphisms.

The last statement follows from the fact that
for any dualisable $P$, the map $0 \to D^n \otimes P$
is a cofibration of $d \acal_{dual}$ and is a homology isomorphism.
Hence any fibration $f$ has the right lifting property with respect to this map.
By the discussion after Definition
\ref{def:widesphere}, it follows that the components of $f$ must be surjective.
\end{proof}

\begin{theorem}\label{thm:monoidal}
The category $d \acal$, equipped with the dualisable model structure,
is a proper symmetric monoidal
model category that satisfies the monoid axiom.
Moreover, the monoidal product on $\ho(d \acal_{dual})$
is compatible with the smash product of rational $\torus$--equivariant spectra.
\end{theorem}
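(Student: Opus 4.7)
The plan is to verify the pushout-product axiom, the unit axiom, the monoid axiom, left and right properness, and compatibility with the smash product, all using the explicit generating cofibrations $i_n \otimes P$ (for $P \in \pcal$) together with the structural facts already established: cofibrant objects of $d\acal_{dual}$ are flat, dualisable objects are closed under tensor product, and $d\acal$ is a closed $\ch(\qq)$-module. A preliminary observation is that $S^0$ is cofibrant, since $0 \to S^0$ is the pushout of $i_0 \otimes S^0$ along $S^{-1} \otimes S^0 \to 0$; this yields the unit axiom immediately.

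For the pushout-product axiom I would reduce to generating cofibrations via the identification
\[
(i_n \otimes P) \square (i_m \otimes Q) \;\cong\; (i_n \square i_m) \otimes (P \otimes Q).
\]
Since $i_n \square i_m$ is a cofibration in $\ch(\qq)$ and $P \otimes Q$ is again dualisable, tensoring gives a cofibration of $d\acal_{dual}$ via the left Quillen functor $- \otimes R \co \ch(\qq) \to d\acal_{dual}$ (for any $R \in \pcal$), which is seen to be left Quillen by its action on the generating sets. Since the generators $i_n \otimes P$ are themselves never weak equivalences, the acyclic half of the pushout-product axiom concerns pairs with one factor a general acyclic cofibration; flatness of $P$ and the $\ch(\qq)$-module structure reduce this to the corresponding statement in the semisimple category $\ch(\qq)$.

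For the monoid axiom, I would show that for any acyclic cofibration $f \co A \to B$ and any object $X$, the map $f \otimes X$ is a homology isomorphism, with this property stable under pushouts, transfinite compositions, and retracts. Stability under cellular constructions follows because $H_*$ commutes with filtered colimits. The key input is that cofibrations in $d\acal_{dual}$ are componentwise injective with componentwise flat cokernels, which one sees from the generating cofibrations, whose cokernels $S^n \otimes P$ have finitely generated projective nubs. For an acyclic cofibration $f$ the resulting short exact sequence $0 \to A \otimes X \to B \otimes X \to C(f) \otimes X \to 0$ is exact; acyclicity of $C(f) \otimes X$ then follows from the fact that $C(f)$, as a cellular complex of flat modules with zero homology, is K-flat.

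Right properness follows because fibrations of $d\acal_{dual}$ are componentwise surjective, so a pullback along a fibration yields componentwise short exact sequences; the five lemma applied to the long exact homology sequence gives the conclusion. Left properness is analogous using the componentwise injectivity and flatness of cokernels of cofibrations. Finally, for compatibility with the smash product, the derived tensor $A \otimes^L B$ on $\ho(d\acal_{dual})$ is computed as $(QA) \otimes B$ where $QA \to A$ is a cofibrant replacement, hence flat. Since $\acal$ has global dimension one, this derived tensor fits into a universal coefficient short exact sequence involving $\otimes$ and $\tor$, which one identifies with the short exact sequence of Theorem~\ref{thm:greenleesmonoidal} describing $\pi_*^\acal(X \smashprod Y)$. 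I expect the monoid axiom to be the principal obstacle: since generating acyclic cofibrations for $d\acal_{dual}$ are only known to exist abstractly via Smith's recognition theorem, establishing the K-flatness of cokernels of acyclic cofibrations will require careful cellular bookkeeping, broadly paralleling the approach of \cite{brfranke}.
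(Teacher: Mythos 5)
Your overall plan matches the paper's structure, but there are two genuine gaps worth flagging, the more serious being right properness.

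Your argument for right properness treats the pullback along a fibration as if it were computed objectwise, then appeals to componentwise short exact sequences and the five lemma. But limits in $d\acal$ are \emph{not} objectwise: as established in Section \ref{sec:limits}, the pullback of a diagram in $d\acal$ is obtained by forming the objectwise pullback in $d\hat{\acal}$ and then applying the torsion functor $\Gamma_h$. Your five-lemma argument correctly shows the objectwise pullback $D$ in $d\hat{\acal}$ maps by a quasi-isomorphism to $C$, but you still need to show that $\Gamma_h D \to C$ is a quasi-isomorphism, i.e. that $\Gamma_h$ preserves quasi-isomorphisms (this is what \cite[Proposition 20.3.4]{gre99} supplies). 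Without this step, the argument only proves right properness of $d\hat{\acal}$, not $d\acal$.

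On the monoid axiom, you correctly identify this as the hard part and correctly anticipate that the lack of explicit generating acyclic cofibrations is the obstacle. However, your proposed resolution — that $C(f)$ for an acyclic cofibration $f$ is ``a cellular complex of flat modules with zero homology'' and hence K-flat — is not adequately justified, since cellular control is only available for maps built from the generating \emph{cofibrations}, not the acyclic ones. The paper's route is cleaner: by \cite[Corollary 2.7]{barwick10} one may take the generating acyclic cofibrations to have cofibrant domains, so the cofibre $Cj$ of each such $j$ is both cofibrant and acyclic; then the separate lemma that $A \otimes -$ preserves weak equivalences when $A$ is cofibrant shows $Cj \otimes X \simeq Cj \otimes QX \simeq 0$, and closure under pushouts and transfinite compositions finishes the monoid axiom. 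If you want to salvage your K-flatness route, you would need to first establish control over the acyclic cofibrations, which in effect forces you through the same Barwick-type reduction. Similarly, the reduction of the acyclic half of the pushout-product axiom ``to the corresponding statement in the semisimple category $\ch(\qq)$'' is too quick, since the acyclic cofibration is arbitrary and does not come from $\ch(\qq)$; again the mechanism is the preservation of weak equivalences by tensoring with a cofibrant object.
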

\begin{proof}
Since the cofibrations are contained in the monomorphisms,
left properness follows from the left properness
of $\ch(\qq)$ and $\ocal_\fcal \leftmod$.
For right properness take some pullback
diagram of a quasi-isomorphism along a fibration
in $d \acal$:
\[
\xymatrix{A \ar[r]^{\simeq} & B & \ar@{->>}[l] C}.
\]
The pullback of this diagram in $d \acal$ is given by applying $\Gamma_h$
to pullback of the induced diagram in $d \hat{\acal}$.
The functor $\ecal^{-1} \ocal_\fcal \otimes_\qq -$ commutes with pullbacks
(where one leg is a surjection) so it follows that the pullback in
$d \hat{\acal}$ is given by the objectwise pullback $D$.
Furthermore, the fibrations of $d \acal$ are surjections,
so $D \to C$ is a quasi-isomorphism in $d \hat{\acal}$.
The functor $\Gamma_h$ preserves quasi--isomorphisms by
\cite[Proposition 20.3.4]{gre99}, so $\Gamma_h D \to C$
is a quasi--isomorphism as desired.

To prove the pushout product axiom we note that the unit is cofibrant
and the pushout of two cofibrations is again a cofibration
by the pushout product axiom for $\ch(\qq)$. Now consider
the pushout of an acyclic cofibration and a generating
cofibration. It is routine to check that the domain and
codomain of the pushout product both have trivial homology,
hence the map is a weak equivalence.

To prove the monoid axiom, note that for any generating cofibration
$i$ and any $A \in d \acal$ the map
$i \otimes A$ is a monomorphism. It follows that
for an acyclic cofibration $j$, $j \otimes A$ is a monomorphism.
By Barwick \cite[Corollary 2.7]{barwick10}
we may assume that the domains of the generating acyclic
cofibrations are cofibrant. Hence the cofibre $Cj$ of
a generating acyclic
cofibration $j$ is both cofibrant and acyclic.
Since $Cj$ is
cofibrant, $Cj \otimes A$ is weakly equivalent to
$Cj \otimes QA$ for $QA$ a cofibrant replacement
of $A$. But $Cj$ is acyclic, so
$Cj \otimes QA$ and hence $Cj \otimes A$ are acyclic.
Thus any map of the form $j \otimes A$
is a monomorphism and a quasi--isomorphism.
Such maps are closed under pushouts and
transfinite compositions, so $d \acal_{dual}$ satisfies the
monoid axiom.

To prove the last statement, recall
Greenlees's short exact sequence from
Theorem \ref{thm:greenleesmonoidal}.
That results relates the smash product of rational
$\torus$--spectra to the tensor product in $\acal$.
The ad-hoc construction of $\tor$ (and hence the derived monoidal product) is defined using the wide spheres. Since the wide spheres are dualisable the derived monoidal product on $\ho(d \acal_{dual})$
is compatible with that short exact sequence.
\end{proof}

Thus we have completed our task of finding a monoidal model structure on
$d \acal$ which is Quillen equivalent to the injective
model structure of \cite{gre99} and has the correct monoidal product.

We can also use the above results to put a monoidal
model structure on $d \hat{\acal}$. This is in fact quite instructive,
as we seem to need a larger class than the dualisable objects in this case.
Indeed, we actually construct a model structure using flat objects of
$d \hat{\acal}$ which are not dualisable. This shows that
even in quite reasonable categories it is not always
possible to construct a dualisable model structure.
See \cite[Remark 6.11]{brfranke} for a discussion
of when a dualisable model structure is likely to exist.

\begin{rmk}\label{rmk:flatmodel}
The analogues of Proposition \ref{prop:smithmodel} and
Theorem \ref{thm:modelstructure} hold for
$d \hat{\acal}$. The extra work required is to find a set of generators.
In this case a set of generators is given by the set of objects of the form
\[
0 \to \ecal^{-1} \ocal_\fcal \otimes V \quad \textrm{and} \quad
\Sigma^k \ocal_\fcal \to \ecal^{-1} \ocal_\fcal \otimes V
\]
for $V$ a finite dimensional
graded vector space without differential and $k \in \zz$.
The first kind allows us to `hit' any element of the vertex.
To `hit' an element $n$ (of degree $k$) in the nub
of $A=(N \to \ocal_\fcal \otimes U)$,
let $U'$ be the vector subspace of $U$ generated by the
$u_i$ that occur in $\beta(n) = \sum_i \sigma_i \otimes u_i$.
We define
$B=(\Sigma^k \ocal_\fcal \to \ecal^{-1} \ocal_\fcal \otimes V)$
by sending the element 1 to $\sum_i \sigma_i \otimes u_i$
in $\ecal^{-1} \ocal_\fcal \otimes U'$.
We then have a map from $A$ to $B$ defined by sending
$1$ to $n$ on the nubs and using the inclusion for the vertices.
If we enlarge this set of generators to include the set
$\pcal$ of representatives of isomorphism classes of dualisable objects of $\acal$,
then the identity is a left Quillen functor from
$d \acal_{dual}$ to $d \hat{\acal}$.

Note that the wide spheres are not sufficient, as there are objects of
$d \hat{\acal}$ whose structure map is not surjective (even
after inverting $\ecal$). Indeed, we seem to need generators of the form
$0 \to \ecal^{-1} \ocal_\fcal \otimes V $ to hit every element of the vertex
(and these generators are not in $\acal$ and are not dualisable
in $\hat{\acal}$). That is, there are not enough
dualisable objects in this category.
Since the nub and vertex of these generators are projective, it follows that
this model structure is monoidal. Hence we call this the \textbf{flat model structure}
on $d \hat{\acal}$.
\end{rmk}

\section{The Quillen equivalence}\label{sec:quillen}

In this section we to construct a symmetric monoidal
Quillen equivalence between $d \acal_{dual}$ and a suitable
model structure on $d R_\bullet \leftmod$, see  Theorem \ref{thm:monoidalequivalence}.
This gives a choice of algebraic model for rational $\torus$--spectra,
with both categories having some advantages, as discussed in the introduction.
We note that a Quillen equivalence between $d \acal_{dual}$ and
$d R_\bullet \leftmod$ is given by Greenlees and Shipley in \cite[Propositions 16.5 and 17.8]{greshi}.
However our proofs take into account the monoidal structure, are more explicit
and are more algebraic in nature.

\begin{proposition}
There is a proper and cellular model structure on $d R_\bullet \leftmod$ with
cofibrations and weak equivalences defined objectwise
(using the projective model structures
of  $\ocal_\fcal \leftmod$ and $\ch(\qq)$).
If we equip $d R_\bullet \leftmod$ with
this model structure
there is a Quillen pair
\[
\inc
:
d \acal_{dual}
\adjunct
d R_\bullet \leftmod
:
\Gamma.
\]
Furthermore, the model category $d R_\bullet \leftmod$ is a symmetric monoidal model category
and $\inc$ is a monoidal functor.
\end{proposition}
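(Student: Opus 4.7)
The plan is to construct the model structure on $d R_\bullet \leftmod$ by cofibrant generation. Each of the projective model structures on $d \ocal_\fcal \leftmod$, $d \ecal^{-1} \ocal_\fcal \leftmod$, and $\ch(\qq)$ is combinatorial, and the three evaluation functors out of $d R_\bullet \leftmod$ admit explicit left adjoints: an $\ocal_\fcal$-module $M$ goes to $(M, \id, \ecal^{-1} M, 0, 0)$, an $\ecal^{-1}\ocal_\fcal$-module $N$ to $(0, 0, N, 0, 0)$, and a $\qq$-module $U$ to $(0, 0, \ecal^{-1}\ocal_\fcal \otimes U, \id, U)$. I would take as generating (acyclic) cofibrations for $d R_\bullet \leftmod$ the union of the images of the generating (acyclic) cofibrations of the three components under these left adjoints; a standard Smith-theorem argument, as in Proposition~\ref{prop:smithmodel}, produces a cofibrantly generated model structure whose weak equivalences are precisely the objectwise quasi-isomorphisms and whose fibrations are the objectwise projective fibrations. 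Since each left adjoint to evaluation sends projective (acyclic) cofibrations to objectwise projective (acyclic) cofibrations, one can verify that the resulting class of cofibrations agrees with the objectwise projective cofibrations, as required.

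Properness follows from properness of each component: pushouts and pullbacks in $d R_\bullet \leftmod$ are computed objectwise (Section~\ref{sec:limits}), and each projective model structure on chain complexes of modules over a graded commutative $\qq$-algebra is proper. Cellularity follows from the cellularity of the three combinatorial component categories. For the Quillen pair $\inc \dashv \Gamma$, the adjunction is already in hand, so it suffices to check that $\inc$ sends generating cofibrations of $d \acal_{dual}$ to cofibrations of $d R_\bullet \leftmod$. A generating cofibration has the form $i_n \otimes P$ with $P \in \pcal$; by Proposition~\ref{prop:dualfingenproj} the nub of $P$ is finitely generated and projective (hence cofibrant in $d \ocal_\fcal \leftmod$) and the vertex $U$ is finite-dimensional (hence cofibrant in $\ch(\qq)$), so the three components of $\inc(P) = (N, \beta, \ecal^{-1}\ocal_\fcal \otimes U, \id, \ecal^{-1}\ocal_\fcal \otimes U)$ are all cofibrant. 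Since $\inc$ is monoidal, $\inc(i_n \otimes P) = i_n \otimes \inc(P)$, which is an objectwise cofibration by the pushout-product axiom in each component. Preservation of weak equivalences is immediate: a homology isomorphism $(\theta, \phi)$ in $d\acal$ maps to $(\theta, \id \otimes \phi, \id \otimes \phi)$, and each entry is a quasi-isomorphism using flatness of $\ecal^{-1}\ocal_\fcal$ over $\qq$.

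For the monoidal structure, the tensor product on $d R_\bullet \leftmod$ was defined objectwise in Section~\ref{sec:monoidal}, and the unit quintuple has free (hence cofibrant) components. The pushout-product axiom reduces on generating cofibrations to the pushout-product axiom in each of the three component projective monoidal model structures, which is classical. That $\inc$ is symmetric monoidal was recorded in the earlier lemma. The main obstacle in the plan is matching the cofibrations produced by the small object argument with the objectwise projective cofibrations: one must show that any objectwise projective cofibration can be retracted from a relative cell complex built from the chosen generators, which uses that the span indexing $R_\bullet$ is a direct Reedy category (so latching conditions degenerate) together with the explicit form of the three left adjoints to evaluation. Once this identification is secured, the remaining verifications are routine.
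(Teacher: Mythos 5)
Your plan diverges from the paper's in a way that is not merely cosmetic, and I believe it fails. Taking the generating (acyclic) cofibrations of $d R_\bullet \leftmod$ to be the images of the component generators under the three left adjoints to evaluation produces the \emph{projective} model structure on the diagram category: the fibrations become objectwise surjections, and the cofibrations acquire a latching condition at the middle ($\ecal^{-1}\ocal_\fcal$-module) slot. That is not the model structure of the proposition, whose cofibrations are required to be objectwise. The span $\ocal_\fcal \to \ecal^{-1}\ocal_\fcal \leftarrow \qq$ is indeed a direct Reedy category, but its latching conditions do \emph{not} degenerate: the latching object at the middle vertex of an object $(M,\alpha,N,\gamma,U)$ is $\ecal^{-1}M \oplus (\ecal^{-1}\ocal_\fcal \otimes U)$, which is nontrivial. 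Only the two outer vertices (of minimal degree) have empty latching categories. Your closing paragraph, which tries to reconcile the cell complexes built from your generators with objectwise cofibrations by claiming the latching conditions vanish, rests on a false premise.

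This is not a presentational slip; it breaks the Quillen pair. For any nonzero cofibrant $A = (\beta \co N \to \ecal^{-1}\ocal_\fcal \otimes U)$ in $d\acal_{dual}$, the object $\inc A = (N, \beta, \ecal^{-1}\ocal_\fcal \otimes U, \id, \ecal^{-1}\ocal_\fcal \otimes U)$ has latching map at the middle slot
\[
\ecal^{-1}N \ \oplus \ \bigl(\ecal^{-1}\ocal_\fcal \otimes_\qq (\ecal^{-1}\ocal_\fcal \otimes U)\bigr) \longrightarrow \ecal^{-1}\ocal_\fcal \otimes U,
\]
whose first summand already maps by the isomorphism $\ecal^{-1}\beta$. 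This map is surjective but not injective, hence not a cofibration, so $\inc A$ is not cofibrant in the projective model structure, and $\inc$ would not preserve cofibrations. Thus $(\inc, \Gamma)$ would fail to be a Quillen pair with the model structure you construct.

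The paper's actual generators (listed immediately after the proposition) are not the images under the left adjoints to evaluation, and the difference is exactly the point. Compare, for the vertex slot: you would take the image of $0 \to D^n$ under the left adjoint at $U$, namely $0 \to (0,0,D^n\otimes T,\id, D^n)$; the paper instead takes $(0,0,D^n\otimes T, 0, 0) \to (0,0,D^n\otimes T,\id, D^n)$, in which the $N$-slot is already filled to $D^n \otimes T$ in the domain and only the $U$-slot changes across the map. Pushing out along such maps changes a single slot, so relative cell complexes built from them are objectwise cofibrations; the resulting right-lifting conditions characterise fibrations as objectwise surjections at $N$ together with \emph{relative matching} surjectivity at $M$ and at $U$, rather than plain objectwise surjectivity. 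That is the Reedy model structure with the span treated as inverse (equivalently, the ``flat'' structure in the sense of Greenlees--Shipley), and it is the one for which $\inc$ preserves cofibrations. The paper's own proof is terse --- it simply cites \cite[Section~3]{gsmodulediagram} --- so your instinct to supply explicit generators is reasonable, but the explicit generators you chose build the wrong model structure.
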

\begin{proof}
It is a standard task to check that the model structure on $d R_\bullet \leftmod$ exists, is proper
and is cellular.
Full details in a much more general setting are given by
Greenlees and Shipley in \cite[Section 3]{gsmodulediagram}.
The adjunction is a Quillen pair since $\inc$ preserves cofibrations and
homology isomorphisms.
\end{proof}

We can give the generating sets for this model structure.
For each $n \in \zz$, let $i_n \co S^{n-1} \to D^n$ in $\ch(\qq)$ be the inclusion.
We also temporarily let $R= \ocal_\fcal$ and $T= \ecal^{-1} \ocal_\fcal$.
The generating cofibrations, denoted $I_{R_\bullet}$, are given by the following maps, for each $n \in \zz$.
\[
\begin{array}{rcccc}
(i_n \otimes R, \id, \id) & \co &
(S^{n-1} \otimes R, i_n \otimes T, D^{n-1} \otimes T, 0, 0) & \longrightarrow & (D^{n-1} \otimes R, \id, D^{n-1} \otimes T, 0, 0) \\
(\id,i_n \otimes T , \id) & \co & (0, 0, S^{n-1} \otimes T, 0, 0) & \longrightarrow & (0, 0, D^n \otimes T, 0, 0) \\
(\id, \id, i_n ) & \co & (0, 0, D^n \otimes T, i_n \otimes T, S^{n-1}) & \longrightarrow & (0, 0, D^n \otimes T, \id, D^n).
\end{array}
\]
The set of generating acyclic cofibrations, denoted $J_{R_\bullet}$, is
given by the following collection of maps for $n \in \zz$.
\[
\begin{array}{rcccc}
(0, \id, \id) & \co & (0,0, D^n \otimes T, 0, 0) & \longrightarrow & (D^n \otimes R, \id, D^n \otimes T, 0, 0) \\
(\id, 0, \id) & \co & (0, 0, 0, 0, 0) & \longrightarrow & (0, 0, D^n \otimes T, 0, 0) \\
(\id, \id, 0) & \co & (0, 0, D^n \otimes T, 0, 0) & \longrightarrow & (0, 0, D^n \otimes T, \id, D^n).
\end{array}
\]

We would like to make the Quillen pair between
$d \acal$ and  $d R_\bullet \leftmod$
into a Quillen equivalence.
We do so by right Bousfield localising $d R_\bullet \leftmod$.
A comprehensive account of right Bousfield localisations can
be found in Hirschhorn \cite{hir03}.
However we will primarily use work of the author
and Roitzheim \cite{brlocal}
since we are in a stable setting and are interested in the monoidal properties
of the localisation.
We first need to give a set of cells, these will
determine the new weak equivalences
of $d R_\bullet \leftmod$. Moreover, every object of the
new homotopy category will be built from
these cells via homotopy colimits \cite[Theorem 5.1.5]{hir03}.

As we want to make $(\inc, \Gamma)$ into a Quillen equivalence
we look for our cells in $d \acal$.
Since $\acal$ models rational $\torus$--spectra,
we know a set of objects that detects weak equivalences
in $d \acal_{dual}$: the objects  $\pi_*^{\acal}(\torus/H_+)$
for $H$ a closed subgroup of $\torus$. By \cite[Lemma 13.6]{greshi}
we can construct each such object from $S^0$ using cofibre
sequences and suspensions by
functions with finite support. Hence we
have the following definition.

\begin{definition}
We define the set of \textbf{cells} $K$ to be the set
of all shifts of algebraic spheres $\inc S^\nu$ and $\inc S^{-\nu}$
for $\nu \co \fcal \to \zz_{\geqslant 0}$ of finite support,
see Definition \ref{def:algshere}.
\[
K = \{
\inc S^{n +\nu}, \inc S^{n-\nu} \ \mid \ n \in \zz, \
\nu \co \fcal \to \zz_{\geqslant 0} \textrm{ with finite support}
\}
\]
\end{definition}

Note that the set $K$ consists of cofibrant objects of $d R_\bullet \leftmod$
and that $K$ is closed under the tensor product. In the language of
Barnes and Roitzheim \cite{brlocal} this set is a monoidal and stable set of cells.
We give a proof that this set detects weak equivalences in
$d \acal_{dual}$ that does not require spectra. In the language of triangulated
categories, the following result says that the algebraic spheres generate
$\ho(d \acal_{dual})$.

\begin{proposition}\label{prop:repgenerate}
An object $A$ of $d \acal_{dual}$ is weakly equivalent to $0$ if and only if
$[S^\nu, A]_*^{\acal}=0$ for all algebraic spheres $S^\nu$.
\end{proposition}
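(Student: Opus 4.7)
The forward direction is immediate: weak equivalences in $d \acal_{dual}$ induce isomorphisms on $[-, A]^{\acal}_*$, so $A \simeq 0$ forces $[S^\nu, A]^\acal_* = 0$ for every $\nu$.

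For the converse, I would replace $A$ by a cofibrant-fibrant model and apply Lemma \ref{lem:LRquillen} to recast the hypothesis as acyclicity of the chain complex $\acal(S^\nu, A)_*$ for every $\nu$. The plan is then to mimic the final paragraph of the proof of Theorem \ref{thm:modelstructure}: it suffices to show every cycle in the nub of $A$ is a boundary, because the structure map becomes an isomorphism after inverting $\ecal$ and $\qq$ is a $\qq$-module retract of $\ecal^{-1} \ocal_\fcal$ in degree zero, so acyclicity of the nub propagates to acyclicity of $\ecal^{-1} \ocal_\fcal \otimes U$ and hence of the vertex. Given a cycle $n$ in the nub, the construction after Definition \ref{def:widesphere} produces a wide sphere $P$ and a map $\alpha \co P \to A$ whose image contains $n$; an adjustment by Euler classes, exactly as in Theorem \ref{thm:modelstructure}, arranges for $\alpha$ to be a cycle in $\acal(P, A)_*$, so if that complex is acyclic then $\alpha$ (and hence $n$) is a boundary.

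What remains, and what is the main content, is to upgrade the vanishing from algebraic spheres to wide spheres, that is, to establish that $\acal(P, A)_*$ is acyclic for every wide sphere $P$. I would prove this by exhibiting $P$ as a finite iterated cofibre of shifts of algebraic spheres in $\ho(d \acal_{dual})$. The nub of $P$ is finitely generated and projective by Lemma \ref{lem:wideduals}; each distinguished generator of the form $c^{a_i} \otimes t_i$ is the image of a canonical map out of a shift of $S^{-a_i}$, and together with the relation-encoding element $\sum_i \sigma_i \otimes t_i$ these generators present $P$ as an iterated cone on shifted algebraic spheres. Since $\acal(-, A)_*$ converts such triangles into long exact sequences of homology and converts finite coproducts into products of chain complexes, acyclicity passes from algebraic spheres to every wide sphere, completing the argument.

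The main obstacle will be making the decomposition of a wide sphere into algebraic spheres sufficiently explicit: the extra element $\sum_i \sigma_i \otimes t_i$ means a wide sphere is not merely a coproduct of algebraic spheres, so the relations among its generators must be tracked carefully. However, the finiteness of the generating data in Definition \ref{def:widesphere} and the projectivity statement of Lemma \ref{lem:wideduals} reduce this to a routine if fiddly finite homological computation in $\ocal_\fcal \leftmod$, and the Euler-class multiplication tricks already deployed in the surjection argument after Definition \ref{def:widesphere} should supply the required kernel descriptions.
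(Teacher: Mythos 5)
Your forward direction, the observation that a dualisable cofibrant $P$ lets you read off $[P,A]^{\acal}_*$ as the homology of $\acal(P,\fibrep A)_*$, and the reduction "acyclicity of the nub implies acyclicity of the vertex" (via exactness of $\ecal^{-1}(-)$ and the retraction $\qq \to \ecal^{-1}\ocal_\fcal \to \qq$) are all fine. The difficulty is concentrated exactly where you say it is, but I think you underestimate it: the claim that a wide sphere $P$ sits in the \emph{thick} triangulated subcategory of $\ho(d\acal_{dual})$ generated by the shifted algebraic spheres $S^{n\pm\nu}$ is not a routine homological bookkeeping exercise, and in fact it is essentially equivalent to the proposition you are trying to prove. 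Indeed, Proposition \ref{prop:repgenerate} together with Proposition \ref{prop:hocompact} is what shows the algebraic spheres are a set of compact generators, from which one \emph{deduces} that every compact (in particular every dualisable, hence every wide-sphere) object lies in that thick subcategory. Using the conclusion to justify a step of the proof would be circular. Concretely, your proposed presentation of $S$ by mapping shifted copies of $S^{-a_i}$ onto the generators $c^{a_i}\otimes t_i$ does not reach the extra element $\sum_i \sigma_i\otimes t_i$: the quotient of $S$ by the span of the $c^{a_i}\otimes t_i$ is an $\ecal$-torsion module, while cofibres of maps between algebraic spheres keep you inside the world of torsion-free nubs on the module level; recovering $P$ requires a genuine zig-zag of triangles whose existence is precisely the nontrivial content. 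Nothing in Definition \ref{def:widesphere} or Lemma \ref{lem:wideduals} supplies such a zig-zag.

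The paper avoids this entirely by reversing the order and exploiting the defining constraint of $\acal$. First one shows $\h_*(U)=0$: a cycle $u\in U$ lifts, after multiplying by a suitable Euler class $c^\nu$ (possible because $\ecal^{-1}\beta$ is an isomorphism), to a cycle $x\in\Sigma^\nu N$, and the pair $(x,u)$ is a cycle map $\Sigma^k S^{-\nu}\to\fibrep A$; acyclicity of $\acal(S^{-\nu},\fibrep A)_*$ forces it to be a boundary, so $u$ is a boundary. Once the vertex has trivial homology one replaces $\fibrep A$ by $Z=(N\to 0)$ up to quasi-isomorphism, and now the isomorphism-after-localisation constraint has evaporated: \emph{any} cycle $n\in N$ yields a cycle map $\Sigma^k S^0\to Z$ by sending $1\mapsto n$, with no wide sphere or Euler-class adjustment needed, and acyclicity of $\acal(S^0,Z)_*$ kills $n$. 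This two-step argument uses only the $S^\nu$ themselves and never needs to compare wide spheres to algebraic spheres; that is the idea your outline is missing.
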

\begin{proof}
The algebraic spheres $S^\nu$ are dualisable in $\acal$
and hence are cofibrant in $d \acal_{dual}$.
Thus $[S^\nu, A]_*^{\acal}=0$ if and only if
$\acal(S^\nu, \fibrep A)_*$ is an acyclic chain complex,
where $\fibrep$ denotes fibrant replacement
in the dualisable model structure.
Let $\fibrep A=(\beta \co N \to \ecal^{-1} \ocal_\fcal \otimes U)$
and take $u$ to be a cycle of degree $k$ in $U$.
Since $\ecal^{-1} \beta$ is an isomorphism, there is an
Euler class $c^\nu$ such that $1 \otimes u$ is the image of
some cycle $x$ under
\[
c^{-\nu} \circ \beta \co \Sigma^\nu N \to \ecal^{-1} \ocal_\fcal \otimes U.
\]
Define a map $\Sigma^k \ocal_\fcal \to \Sigma^\nu N$ by sending
$1$ to $x$ and define a map $\Sigma^k \qq \to U$ by sending
the generator to $u$.
This gives a cycle map $\Sigma^k S^0 \to \Sigma^\nu \fibrep A$.
Hence we have a cycle map $\Sigma^k S^{-\nu} \to \fibrep A$.
This cycle map is a boundary as $\acal(S^V, \fibrep A)_* \simeq 0$.
So $u$ is a boundary and $\h_*(U)=0$.

Let $Z = (N \to 0)$ then it follows that $\fibrep A \to Z$
is a quasi-isomorphism.
Moreover, $Z$ is fibrant in $d \acal_{dual}$.
The functor  $\acal(S^\nu, -)$ is right adjoint
to the Quillen functor $S^\nu \otimes -$
from $\ch(\qq)$ to $d \acal$.
Hence it preserves all weak equivalences between fibrant objects
and $\acal(S^\nu, Z)_* \simeq \acal(S^\nu, \fibrep A)_* \simeq 0$.

We now show that $Z$ is acyclic.
Take some cycle $n \in N$ of degree $k$.
We can map $1 \in \Sigma^k \ocal_\fcal$ to $n$.
This defines a cycle map $S^k \to Z$. Hence this map
is a boundary and hence so is $n$. It follows that $Z$ and $\fibrep A$
are quasi-isomorphic to zero as desired.
\end{proof}

We can now describe the right Bousfield localisation of
$d R_\bullet \leftmod$ at the set of cells $K$.

\begin{theorem}\label{thm:cellmodel}
There is a stable monoidal model structure on $d R_\bullet \leftmod$
whose weak equivalences are those maps $f \co A \to B$ such that
\[
[S^\nu, f]_*^{R_\bullet} \co [S^\nu, A]_*^{R_\bullet} \longrightarrow [S^\nu, B]_*^{R_\bullet}
\]
is an isomorphism (of sets of maps in the homotopy category of
$d R_\bullet \leftmod$) for all algebraic spheres $S^\nu$..
Furthermore, this model structure is proper and cellular.
The set of generating acyclic cofibrations is given by $J_{R_\bullet}$.
The set of generating cofibrations is given by the union of $J_{R_\bullet}$
with the set of maps
$i_n \otimes \inc S^\nu \co S^{n-1} \otimes \inc S^\nu \to D^n \otimes \inc S^\nu$
where $n \in \zz$ and $\nu \co \fcal \to \zz_{\zz \geqslant 0}$
with finite support.
We write $K \cell d R_\bullet \leftmod$ for this model structure.
\end{theorem}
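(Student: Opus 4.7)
The plan is to obtain this model structure as the right Bousfield localisation of $d R_\bullet \leftmod$ at the set of cells $K$, invoking the machinery of Hirschhorn \cite{hir03} to get the underlying model structure and that of Barnes--Roitzheim \cite{brlocal} to upgrade it to a stable monoidal one. The previous proposition has established that $d R_\bullet \leftmod$ (with the objectwise projective model structure) is proper, cellular, and symmetric monoidal; since the generating sets $I_{R_\bullet}$ and $J_{R_\bullet}$ contain their own suspensions and desuspensions, it is also stable. Note that every element of $K$ is cofibrant in $d R_\bullet \leftmod$ (the algebraic spheres have projective nub $\ocal_\fcal(\nu)$ by Lemma \ref{lem:wideduals} together with Proposition \ref{prop:dualfingenproj}, and the other components are already free), so $K$ is a legitimate set of cells in the sense of Hirschhorn.

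With these prerequisites in place, \cite[Theorem 5.1.1]{hir03} produces a right Bousfield localisation $K \cell d R_\bullet \leftmod$ whose cofibrations are the cofibrations of the original model structure between $K$-cellular objects, whose fibrations are the fibrations of the original model structure, and whose weak equivalences are the $K$-colocal equivalences. By the standard reformulation in the stable case, a map $f$ is a $K$-colocal equivalence precisely when $[S^\nu, f]_*^{R_\bullet}$ is an isomorphism for every algebraic sphere $S^\nu$, giving the stated form of the weak equivalences. The set of generating acyclic cofibrations is $J_{R_\bullet}$, and the generating cofibrations are obtained by taking $J_{R_\bullet}$ together with horn-like maps built from the cells; since the cells are cofibrant objects of a stable model category, Hirschhorn's description simplifies to the maps $i_n \otimes \inc S^\nu$ listed in the statement. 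Properness and cellularity of the localised structure are also provided by \cite[Theorem 5.1.1]{hir03}, and stability is preserved because $K$ is closed under the shift functors $\Sigma^{\pm 1}$ coming from $\ch(\qq)$.

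The remaining issue, and the main obstacle, is establishing that the localised model structure is symmetric monoidal. Here we invoke \cite{brlocal}: the relevant criterion is that $K$ be a \emph{monoidal} set of cells, meaning that $K$ is closed (up to weak equivalence in $d R_\bullet \leftmod$) under the symmetric monoidal product, and that the unit $S^0 = \inc S^0$ lies in $K$. Both properties are immediate from the computation $S^{\nu} \otimes S^{\nu'} \cong S^{\nu + \nu'}$ (together with its desuspended variants) and the closure of $K$ under integer shifts. Applying \cite{brlocal} then yields that $K \cell d R_\bullet \leftmod$ satisfies the pushout product axiom, and one checks the monoid axiom using the same trick as in the proof of Theorem \ref{thm:monoidal}: the generating acyclic cofibrations have cofibrant domains, so smashing a generating acyclic cofibration with any object yields a map weakly equivalent to one with cofibrant, $K$-acyclic domain and codomain, and such maps are closed under pushouts and transfinite compositions.

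Finally, the stability of the new model structure, combined with the identification of the weak equivalences in terms of the algebraic spheres $S^\nu$, justifies the notation $K \cell d R_\bullet \leftmod$ and sets up the comparison with $d \acal_{dual}$ in the subsequent monoidal Quillen equivalence of Theorem \ref{thm:monoidalequivalence}. I expect the verification of the pushout product axiom to be the step most sensitive to the precise form of $K$; should the direct appeal to \cite{brlocal} not apply verbatim to $d R_\bullet \leftmod$ (which is not a category of modules in the standard sense), one would reprove it by hand using the explicit generating sets $I_{R_\bullet}$ and $J_{R_\bullet}$ above together with the fact that $- \otimes \inc S^\nu$ preserves objectwise cofibrations and weak equivalences.
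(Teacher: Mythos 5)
Your proposal is correct and takes essentially the same route as the paper: the paper's entire proof is the citation ``See \cite[Theorems 5.9 and 7.2]{brlocal},'' and what you have written is precisely the unpacking of those two theorems — right Bousfield localisation at the cofibrant, monoidal, stable set of cells $K$, with properness, cellularity and the stated generating sets coming from the localisation machinery and the pushout-product axiom from the monoidality of $K$.
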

\begin{proof}
See \cite[Theorems 5.9 and 7.2]{brlocal}.
\end{proof}

Our next task is to examine the weak equivalences a bit more carefully.
Given a map $f \co X \to Y$ in $d R_\bullet \leftmod$
we define $\fibrep f$ to be a map which makes the following square commute,
where $X \to \fibrep X$ and $Y \to \fibrep Y$ are fibrant replacements of $X$ and $Y$.
\[
\xymatrix{
X \ar[r]^f \ar[d]^-{\simeq} &
Y \ar[d]^-{\simeq} \\
\fibrep X \ar[r]^{\fibrep f} &
\fibrep Y \\
}
\]

\begin{lemma}\label{lem:gammaequiv}
A map $f$ is a weak equivalence in $K \cell d R_\bullet \leftmod$
if and only if $\Gamma \fibrep f$ is a quasi-isomorphism in $d \acal$.
We call such maps \textbf{$\Gamma$-equivalences}.
\end{lemma}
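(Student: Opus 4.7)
The plan is to combine the derived adjunction $(\inc, \Gamma)$ with Proposition \ref{prop:repgenerate} and the stability of $d\acal_{dual}$. First I would set up a natural comparison isomorphism between the two derived mapping sets that define the respective classes of weak equivalences, and then reduce the statement to the detection result for algebraic spheres.

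For the first step, note that each algebraic sphere $S^\nu$ is dualisable by Lemma \ref{lem:wideduals} and hence cofibrant in $d\acal_{dual}$. Since $\inc$ is left Quillen, $\inc S^\nu$ is cofibrant in $d R_\bullet \leftmod$. Because $\Gamma$ is right Quillen, $\Gamma \fibrep X$ is fibrant in $d\acal_{dual}$ for every $X \in d R_\bullet \leftmod$. The derived adjunction therefore provides, naturally in $X$, an isomorphism
\[
[\inc S^\nu, X]_*^{R_\bullet} \cong [S^\nu, \Gamma \fibrep X]_*^{\acal}.
\]
By Theorem \ref{thm:cellmodel}, the map $f \co X \to Y$ is a weak equivalence in $K \cell d R_\bullet \leftmod$ exactly when $[\inc S^\nu, f]_*^{R_\bullet}$ is an isomorphism for every function $\nu \co \fcal \to \zz_{\geqslant 0}$ of finite support. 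Transporting across the above isomorphism, this condition is equivalent to $[S^\nu, \Gamma \fibrep f]_*^{\acal}$ being an isomorphism for every such $\nu$.

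For the second step, recall from Lemma \ref{lem:LRquillen} that $d\acal_{dual}$ is a $\ch(\qq)$-model category and is therefore stable, so $[S^\nu, -]_*^{\acal}$ is a cohomological functor on $\ho(d\acal_{dual})$. The preceding condition is thus equivalent to $[S^\nu, \cofib(\Gamma \fibrep f)]_*^{\acal} = 0$ for every $\nu$. By Proposition \ref{prop:repgenerate} this holds if and only if $\cofib(\Gamma \fibrep f) \simeq 0$ in $\ho(d\acal_{dual})$, which by stability is equivalent to $\Gamma \fibrep f$ being a weak equivalence in $d\acal_{dual}$, i.e.\ a quasi-isomorphism in $d\acal$.

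The main point that requires care is confirming that the comparison isomorphism in the first step is genuinely the derived adjunction applied to the specified fibrant replacement $\fibrep X$, rather than some other fibrant model; this follows because $\Gamma$ preserves fibrant objects and all weak equivalences between them, so $\Gamma \fibrep X$ represents the total right derived functor $\mathbb{R}\Gamma X$. Beyond this, the remaining steps are formal consequences of the Quillen adjunction, stability, and the detection result of the previous section.
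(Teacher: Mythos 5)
Your proof is correct and takes essentially the same approach as the paper: both use the derived adjunction $(\inc, \Gamma)$ to convert the condition $[\inc S^\nu, f]_*^{R_\bullet}$ being an isomorphism into $[S^\nu, \Gamma \fibrep f]_*^{\acal}$ being an isomorphism, then reduce to Proposition \ref{prop:repgenerate} applied to the (co)fibre. The only cosmetic difference is that the paper passes to the homotopy fibre of $\Gamma \fibrep f$ while you pass to the cofibre, which is the same thing in the stable setting, and you spell out the derived-adjunction bookkeeping a bit more explicitly.
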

\begin{proof}
Since each cell $S^\nu$ is in the image of the functor $\inc$,
it follows that a map $f$ is a weak equivalence if and only if
\[
[S^\nu, \Gamma \fibrep f]_*^{\acal}
\co
[S^\nu, \Gamma \fibrep A]_*^{\acal}
\longrightarrow
[S^\nu, \Gamma \fibrep B]_*^{\acal}
\]
is an isomorphism (of maps in the homotopy category of $d \acal_{dual}$).
Let $Z$ be the homotopy fibre of $\Gamma \fibrep f$ (which is fibrant).
Then, by Proposition \ref{prop:repgenerate}, $Z$ is quasi-isomorphic to 0
if and only if $f$ is a weak equivalence of
$K \cell d R_\bullet \leftmod$. The result follows immediately.
\end{proof}

We can now give the main theorem of this section.
Note that while the algebraic spheres are the
correct set of cells to use at the level of homotopy,
they would not be sufficient for the purposes of
Theorem \ref{thm:modelstructure}.

\begin{theorem}\label{thm:monoidalequivalence}
There is a commutative diagram of Quillen pairs as below, with left adjoints
on top.
\[
\xymatrix@C+1.8cm@R+0.4cm{
& d \acal_{dual}
\ar@<-3pt>[dl]_{\inc}
\ar@<+3pt>[dr]^{\inc}
\\
K \cell d R_\bullet \leftmod
\ar@<+3pt>[rr]^{\id}
\ar@<-3pt>[ur]_{\Gamma}
&&
d R_\bullet \leftmod
\ar@<+3pt>[ll]^{\id}
\ar@<+3pt>[ul]^{\Gamma}
}
\]
Furthermore, the Quillen adjunction $(\inc, \Gamma)$ between $d \acal_{dual}$
and $K \cell d R_\bullet \leftmod$ is a symmetric monoidal Quillen equivalence.
\end{theorem}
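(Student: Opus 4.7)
The statement bundles together four assertions: commutativity of the triangle, that the slanted pair is Quillen, that it is a Quillen equivalence, and that both pairs are symmetric monoidal. Commutativity is immediate from the definitions, and the horizontal identity pair is Quillen by Theorem~\ref{thm:cellmodel}, so the real work lies in the slanted adjunction.

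For the Quillen pair into the localisation, I would check the right adjoint $\Gamma$ preserves fibrations and acyclic fibrations. Fibrations agree with those of $d R_\bullet\leftmod$ by right Bousfield localisation, so $\Gamma$ preserves them by the proposition opening this section. For an acyclic fibration $p$ in $K \cell d R_\bullet\leftmod$, the $K$--colocal condition means $[\inc S^\nu, p]_*^{R_\bullet}$ is an isomorphism for every algebraic sphere. Under the adjunction this translates to $[S^\nu, \Gamma p]_*^{\acal}$ being an isomorphism for every $\nu$, and since algebraic spheres detect weak equivalences in $d\acal_{dual}$ (Proposition~\ref{prop:repgenerate}, applied to the homotopy fibre of $\Gamma p$), $\Gamma p$ is a quasi--isomorphism, hence an acyclic fibration.

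For the Quillen equivalence, the crucial simplification is that the unit $A \to \Gamma\inc A$ is literally an isomorphism, not merely a weak equivalence, for every $A$ in $d\acal$, by the lemma in Section~\ref{sec:limits}. This settles the cofibrant half of the Quillen equivalence criterion immediately. For the fibrant half I would show that $\Gamma$ reflects weak equivalences between fibrant objects of $K\cell d R_\bullet\leftmod$: given such a map $p$ with $\Gamma p$ a quasi--isomorphism, the same adjunction calculation as above shows that $p$ is a $K$--colocal equivalence. The main obstacle I anticipate is keeping straight which model structure each hom--set lives in during these adjunction arguments, but the strict isomorphism on units together with the explicit description of the cells in $K$ makes the bookkeeping manageable.

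For the monoidal refinement, $\inc$ is strong symmetric monoidal by the lemma in Section~\ref{sec:monoidal}, and it sends the unit $S^0$ of $d\acal$ to the unit of $d R_\bullet\leftmod$, which lies in $K$ and is therefore cofibrant in $K\cell d R_\bullet\leftmod$. Combined with the symmetric monoidal model structure on $K\cell d R_\bullet\leftmod$ from Theorem~\ref{thm:cellmodel} and the Quillen equivalence established above, the standard criteria (in the sense of Schwede--Shipley) for a symmetric monoidal Quillen equivalence are verified, and the theorem follows.
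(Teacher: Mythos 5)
Your decomposition into the Quillen pair, the Quillen equivalence, and the monoidal refinement matches the paper's overall strategy, and the intended appeal to Proposition~\ref{prop:repgenerate} to detect weak equivalences is correct. However, the derived unit step has a genuine gap. You assert that the strict isomorphism $A \cong \Gamma\inc A$ ``settles the cofibrant half of the Quillen equivalence criterion immediately,'' but the derived unit is $A \to \Gamma\inc A \to \Gamma\fibrep\inc A$, where $\fibrep$ is fibrant replacement in $d R_\bullet\leftmod$, and the point is that $\inc A$ is generally \emph{not} fibrant there. For instance $\inc S^0 = (\ocal_\fcal, \id, \ecal^{-1}\ocal_\fcal, \id, \qq)$ fails fibrancy: a fibrant object of $d R_\bullet\leftmod$ must have the composite $M \to \ecal^{-1}M \to N$ surjective, and here that composite is the non-surjective inclusion $\ocal_\fcal \to \ecal^{-1}\ocal_\fcal$. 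So one must show that $\Gamma$ carries the weak equivalence $\inc A \to \fibrep\inc A$ to a quasi-isomorphism, which is not automatic since $\Gamma$ only preserves weak equivalences between fibrant objects. This is exactly where the paper's proof does its real work: it constructs an explicit fibrant replacement $(N', \beta'', \ecal^{-1}\ocal_\fcal\otimes U, \id, U)$ by factoring $\beta \co N \to \ecal^{-1}\ocal_\fcal\otimes U$ in $\ocal_\fcal$--modules as an acyclic cofibration followed by a fibration, observes that $\Gamma_v$ returns the intermediate object $N'$, and then uses exactness of $\Gamma_h$ to conclude.

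A related, smaller imprecision appears in your Quillen pair argument: the derived adjunction gives $[\inc S^\nu, p]^{R_\bullet}_* \cong [S^\nu, \Gamma\fibrep p]^\acal_*$, not $[S^\nu, \Gamma p]^\acal_*$, and acyclic fibrations of the colocalised structure need not have fibrant source and target, so passing from $\Gamma\fibrep p$ to $\Gamma p$ requires justification. The paper avoids this by invoking Hirschhorn's Theorem~3.1.6, which reduces the check that $\inc$ remains left Quillen after right Bousfield localisation to a statement purely about maps in homotopy categories, where Lemma~\ref{lem:gammaequiv} applies directly. Your verification of reflection of weak equivalences between fibrant objects and your treatment of the monoidal refinement are fine.
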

\begin{proof}
We first need to show that $\inc$ is a left Quillen functor from
$d \acal_{dual}$ to $K \cell d R_\bullet \leftmod$.
By \cite[Theorem 3.1.6]{hir03} it suffices to
show that for any cofibrant $A \in d \acal_{dual}$,
and any weak-equivalence $f$ of $K \cell d R_\bullet \leftmod$
the map $[\inc A, f]_*^{R_\bullet}$ is an isomorphism.
By adjointness, $[\inc A, f]_*^{R_\bullet}$ is isomorphic to
$[A, \Gamma \fibrep f]_*^{\acal}$.
The morphism $\Gamma \fibrep f$ is a quasi-ismorphism
of $d \acal_{dual}$ by Lemma \ref{lem:gammaequiv},
hence $[\inc A, f]_*^{R_\bullet}$ is an isomorphism as desired.
Furthermore,  the functor $\inc$ is clearly symmetric monoidal.

To show that $(\inc, \Gamma)$ is a Quillen equivalence,
we consider the derived unit and counit.
Start with some cofibrant $A \in d \acal_{dual}$ and let $\fibrep$
be the fibrant replacement functor of $d R_\bullet \leftmod$.
We claim the the derived unit map
\[
A \to \Gamma \inc A \to \Gamma \fibrep \inc A
\]
is a quasi-isomorphism.
We know that the first map is an isomorphism, so we focus on
$\inc A \to \fibrep \inc A$.
Let $A=(\beta \co N \to \ecal^{-1} \ocal_\fcal \otimes U)$,
and factor the map $N \to \ecal^{-1} \ocal_\fcal \otimes U$
as an acyclic cofibration $\alpha \co N \to N'$ followed by a fibration
$\beta' \co N' \to \ecal^{-1} \ocal_\fcal \otimes U$.
The map $\beta'$ induces a map
$\beta'' \co \ecal^{-1} N' \to \ecal^{-1} \ocal_\fcal \otimes U$.
A fibrant replacement of $\inc A$ is given by
$(N', \beta'', \ecal^{-1} \ocal_\fcal \otimes U, \id, U)$
and $\alpha$ induces a quasi-isomorphism from $\inc A \to \fibrep \inc A$.
If we apply $\Gamma_v$ to this construction of $\fibrep \inc A$
we obtain the object
$(\beta' \co N' \to \ecal^{-1} \ocal_\fcal \otimes U) \in d \hat{\acal}$.
Furthermore, $\alpha$ induces a quasi-isomorphism (in $d \hat{\acal}$)
from $A$ to $\Gamma_v \fibrep \inc A$.
Since $\Gamma_h$ preserves quasi-isomorphisms, our claim holds.

Now let $C$ be a fibrant object of $K \cell d R_\bullet \leftmod$
and let $\cofrep$ denote a cofibrant replacement in $d \acal_{dual}$.
We claim that the derived counit map
\[
\inc \cofrep \ \Gamma C \to \inc \Gamma C \to C
\]
is a $\Gamma$--equivalence. We first note that
since $\inc$ preserves quasi-isomorphisms and
every quasi-isomorphism is weak equivalence of
$K \cell d R_\bullet \leftmod$, the first map of the above composite
is a weak equivalence.
So it suffices to show that $\inc \Gamma C \to C$
is a $\Gamma$--equivalence. After applying the
derived functor of $\Gamma$ we obtain a commutative diagram as below.
\[
\xymatrix{
\Gamma \fibrep \inc \Gamma C
\ar[r]  & \Gamma \fibrep C  \\
\Gamma \inc \Gamma C \ar[r] \ar[u]^{\simeq} &
\Gamma C \ar[u]^{\simeq}
}
\]
The left vertical map is a weak equivalence by the arguments
used in showing that the derived unit is a weak equivalence.
The right hand vertical is a weak equivalence as $C$ is already fibrant.
Finally the lower horizontal map is an isomorphism as $\Gamma \inc \cong \id$.
Hence the derived counit map is a weak equivalence.
\end{proof}

We can also phrase the above Quillen equivalence in terms of an
inclusion of triangulated subcategories.
That is,  $\acal = \ho (d \acal_{dual})$ is the
smallest full triangulated subcategory
of $\ho (R_\bullet \leftmod)$ that is closed under coproducts and contains the cells $K$.
To show this we combine the following proposition,
Proposition \ref{prop:repgenerate} and
\cite[Theorem 9.3]{brlocal} (with spectra replaced by
$\ch(\qq)$).
Hence we are fully justified in calling
$R_\bullet \leftmod$ a larger category than $d \acal_{dual}$.

We say that an object $A$ of a stable model category
$\ccal$ is \textbf{homotopically compact} if
given any collection of objects $B_i$ the natural map
\[
\xymatrix{\oplus_i [A,B_i]_*^\ccal \ar[r] & [A,\coprod_i B_i]_*^\ccal}
\]
is an isomorphism of sets of maps in the homotopy category of $\ccal$.

\begin{proposition}\label{prop:hocompact}
The shifted algebraic spheres $S^{n +\nu}$
are homotopically compact in $d \acal_{dual}$ and
$K \cell d R_\bullet \leftmod$.
\end{proposition}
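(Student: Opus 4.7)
The plan is to prove homotopical compactness in $d \acal_{dual}$ directly, using the explicit description of the internal function object for dualisable objects, and then to transport the result to $K \cell d R_\bullet \leftmod$ via the Quillen equivalence of Theorem \ref{thm:monoidalequivalence}.

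For $d \acal_{dual}$, I would first note that $S^{n+\nu}$ is dualisable (Lemma \ref{lem:wideduals}), so its nub $\ocal_\fcal(n+\nu)$ is finitely generated projective (Proposition \ref{prop:dualfingenproj}); in particular $S^{n+\nu}$ is cofibrant. Since coproducts in $d\acal$ are formed objectwise on nubs and vertices (Section \ref{sec:limits}) and cofibrations in $d \acal_{dual}$ are monomorphisms, coproducts preserve both cofibrations and homology isomorphisms. Thus for any family $\{B_i\}$, a coproduct of fibrant replacements $\coprod \fibrep B_i$ models the homotopy coproduct, and by Lemma \ref{lem:LRquillen}:
\[
[S^{n+\nu}, \coprod_i B_i]^\acal_* \cong \h_*\left( \acal(S^{n+\nu}, \coprod_i \fibrep B_i)_* \right).
\]

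Next, I would apply Proposition \ref{prop:specialfunction} (whose hypotheses hold as $\ocal_\fcal(n+\nu)$ is finitely presented and has no $\ecal$--torsion) to describe $F(S^{n+\nu}, -)$ in terms of $\hom_{\ocal_\fcal}(\ocal_\fcal(n+\nu), -)$. Since $\ocal_\fcal(n+\nu)$ is finitely generated projective, this hom commutes with arbitrary coproducts, and the same is true for $\hom_\qq$ applied to finite-dimensional vertex data. Combined with the objectwise form of coproducts in $d \acal$, this yields
\[
F(S^{n+\nu}, \coprod_i \fibrep B_i) \cong \coprod_i F(S^{n+\nu}, \fibrep B_i).
\]
A direct inspection shows that $R = \acal(S^0, -)$ also commutes with coproducts: a map out of $S^0$ is specified by an element of the nub and a compatible element of the vertex, both of which have finite support in a coproduct. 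Since $\h_*$ of a coproduct of chain complexes is the sum of their homologies, assembling these steps delivers the required isomorphism $[S^{n+\nu}, \coprod_i B_i]^\acal_* \cong \bigoplus_i [S^{n+\nu}, B_i]^\acal_*$.

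For $K \cell d R_\bullet \leftmod$, I would invoke Theorem \ref{thm:monoidalequivalence}: the symmetric monoidal Quillen equivalence $(\inc, \Gamma)$ descends to a triangulated equivalence of homotopy categories, and homotopical compactness is preserved under such equivalences. Thus $\inc S^{n+\nu}$ inherits compactness from $S^{n+\nu}$. The main potential obstacle is the step establishing that categorical coproducts of fibrant replacements faithfully represent the derived coproduct; this requires the observation that coproducts preserve monomorphisms and quasi--isomorphisms in each of the constituent abelian categories ($\ocal_\fcal \leftmod$ and $\ch(\qq)$), which is the technical crux one must verify carefully before combining the components through the pullback defining limits.
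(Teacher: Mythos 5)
Your proposal inverts the direction of the paper's argument: you try to prove compactness first in $d \acal_{dual}$ and then transport it to $K \cell d R_\bullet \leftmod$, whereas the paper proves it in $K \cell d R_\bullet \leftmod$ and transports it to $d \acal_{dual}$. The direction matters here, and there is a genuine gap in yours.

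Your argument hinges on the claim that $\coprod_i \fibrep B_i$ represents the homotopy coproduct in a form to which Lemma \ref{lem:LRquillen} applies, i.e.\ that it is fibrant in $d \acal_{dual}$. But the fibrations of $d \acal_{dual}$ are not under explicit control: the model structure is produced by Smith's theorem (Proposition \ref{prop:smithmodel}), which gives no concrete description of the generating acyclic cofibrations, and the only available statement (that fibrations have surjective $\theta$ and $\phi$) is a necessary condition, not a characterisation. You acknowledge a "potential obstacle," but you misidentify it as being about whether coproducts preserve monomorphisms and quasi-isomorphisms in the constituent categories; that is true and irrelevant. What you actually need, and cannot easily get, is that a coproduct of fibrant objects remains fibrant in $d \acal_{dual}$. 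Absent that, the identification $[S^{n+\nu}, \coprod_i B_i]^\acal_* \cong \h_*\bigl(\acal(S^{n+\nu}, \coprod_i \fibrep B_i)_*\bigr)$ is not justified, and the later steps (commuting $F(S^{n+\nu},-)$ and $R$ past the coproduct) do not rescue it, since $R = \acal(S^0,-)$ is only known to be homotopically well-behaved on fibrant objects.

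The paper avoids this by working in $K \cell d R_\bullet \leftmod$, where the fibrant objects are exactly those with surjective structure maps $\alpha$ and $\gamma$ (right Bousfield localisation does not change the fibrant objects, and in the underlying objectwise-projective model structure on $d R_\bullet \leftmod$ fibrations are surjections). Surjections are closed under coproducts, so $\coprod_i B_i$ is fibrant whenever each $B_i$ is, and the finiteness argument (a map out of the finitely generated module $\ocal_\fcal(\nu)$, or out of $\ecal^{-1}\ocal_\fcal$ or $\qq$, into an infinite direct sum factors through a finite subsum) closes the proof. Compactness in $d \acal_{dual}$ then follows from the Quillen equivalence, exactly as you do but in the opposite direction. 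If you want to repair your argument, the cleanest route is to reverse the order and prove the statement in $K \cell d R_\bullet \leftmod$ first.
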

\begin{proof}
Since these two model categories are Quillen equivalent,
it suffices to show the cells of
$K \cell d R_\bullet \leftmod$ are homotopically compact.
An object $(M, \alpha, N, \gamma, U)$ of $K \cell d R_\bullet \leftmod$
is fibrant if and only if the structure maps $\alpha$ and
$\gamma$ are surjective.
It follows that if $B_i$ is a collection of fibrant objects
of $K \cell d R_\bullet \leftmod$ then $\oplus_i B_i$ is fibrant.
Hence we have an isomorphism
\[
\xymatrix{
[S^V,\oplus_i B_i]_*^{K \cell R_\bullet}
\ar[r] &
\h_* \left( \acal(S^V, \oplus_i B_i)_* \right)
}\]
It is easily seen that an $\ocal_\fcal$--module map from $\ocal_\fcal(V)$
into an infinite direct sum lands in some finite sum.
The same is true for $\ecal^{-1} \ocal_\fcal$ in
$\ecal^{-1} \ocal_\fcal$--modules and $\qq$ in
$\ch(\qq)$. So it follows that the natural map
\[
\xymatrix{ \oplus_i \acal(S^V, B_i)_*
\ar[r] & \acal(S^V, \oplus_i B_i)_* }
\]
is an isomorphism and the result follows immediately.
\end{proof}

As well as $K \cell R_\bullet \leftmod$ and $d \acal_{dual}$
we would also like a model structure on $d \hat{\acal}$
and a Quillen equivalence to either of these two model categories.
An adaptation of our earlier work provides such a model structure.

\begin{theorem}
There is a model structure on
$d \hat{\acal}$ where the weak equivalences are
those maps $f$ such that $\h_* (\Gamma_h f)$ is an isomorphism
and the generating cofibrations are the maps $\inc (i_n \otimes P)$ for $P \in \pcal$
and $n \in \zz$.
Moreover the Quillen pair $(\inc, \Gamma)h)$ is a
Quillen equivalence between this model structure on $d \hat{\acal}$
and $d \acal_{dual}$.
\end{theorem}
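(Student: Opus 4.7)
The plan is to mirror the construction of the dualisable model structure on $d \acal$ (Theorem \ref{thm:modelstructure}) via Smith's theorem, then leverage the adjunction $\inc \dashv \Gamma_h$ together with the fact that $\inc$ is full and faithful (so $\Gamma_h \inc \cong \id_{d \acal}$) to obtain the Quillen equivalence essentially for free. In effect, the proposed weak equivalences on $d \hat{\acal}$ are defined precisely so that $\Gamma_h$ becomes both a homotopy-preserving and a homotopy-reflecting functor.

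For the existence of the model structure, I would invoke an analogue of Proposition \ref{prop:smithmodel} with $d \hat{\acal}$ in place of $d \acal$. The locally presentable hypothesis is checked exactly as in the proof of that proposition, using the generating set described in Remark \ref{rmk:flatmodel}. The accessibility of the class of weak equivalences follows from the fact that $\Gamma_h$ commutes with filtered colimits (its construction in Definition \ref{def:torsionhorizontal} is in terms of hom-sets out of the finitely presented algebraic spheres, together with a tensor product and a finite pullback) combined with the accessibility of homology isomorphisms established in the proof of Proposition \ref{prop:smithmodel}. The remaining condition of Smith's theorem is the key technical step: one must show that $\mathrm{inj}(I) \subseteq W$, where $I = \{\inc(i_n \otimes P) \mid P \in \pcal,\ n \in \zz\}$. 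This follows from the adjunction, since $f \in \mathrm{inj}(I)$ iff $\Gamma_h f$ has the right lifting property against the generating cofibrations of $d \acal_{dual}$; by Theorem \ref{thm:modelstructure} such a map is an acyclic fibration, so in particular a quasi-isomorphism, and hence $f$ is a weak equivalence in $d \hat{\acal}$.

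To show that $(\inc, \Gamma_h)$ is a Quillen equivalence, first observe it is a Quillen pair: $\inc$ sends generating cofibrations to generating cofibrations by construction, and preserves weak equivalences because $\Gamma_h \inc f \cong f$. The derived unit at a cofibrant $A \in d \acal_{dual}$ is the composite $A \to \Gamma_h \inc A \to \Gamma_h (\fibrep \inc A)$, in which the first map is an isomorphism and the second is $\Gamma_h$ of a weak equivalence in $d \hat{\acal}$, which by definition is a quasi-isomorphism. For the derived counit at a fibrant $C \in d \hat{\acal}$, apply $\Gamma_h$ to $\inc \cofrep (\Gamma_h C) \to \inc \Gamma_h C \to C$; using $\Gamma_h \inc \cong \id$ and the triangle identity, this becomes $\cofrep \Gamma_h C \to \Gamma_h C \xrightarrow{\id} \Gamma_h C$, which is cofibrant replacement in $d \acal_{dual}$ and hence a quasi-isomorphism, so the derived counit is a weak equivalence in $d \hat{\acal}$.

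The main obstacle will be the accessibility and local presentability verifications, but since $d \hat{\acal}$ sits between the locally presentable categories $d \acal$ and $d R_\bullet \leftmod$ (where the analogous facts follow from \cite{beke00} and the interaction of $\Gamma_h$ with filtered colimits), the required book-keeping is routine rather than conceptually novel. Everything else is forced by the adjunction plus the definition of the weak equivalences.
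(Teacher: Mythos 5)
Your proof is correct and follows essentially the same route as the paper's (very terse) proof: verify the Smith-theorem hypotheses for $d\hat{\acal}$ using the generating objects from Remark~\ref{rmk:flatmodel} and the fact that $\Gamma_h$ commutes with filtered colimits, get $\mathrm{inj}(I)\subseteq W$ by the adjunction $\inc \dashv \Gamma_h$, and deduce the Quillen equivalence from $\Gamma_h\inc\cong\id$ and the triangle identity. You actually fill in details that the paper merely gestures at ("an analogous argument to Theorem~\ref{thm:monoidalequivalence} provides the Quillen equivalence"), and your derived unit/counit check is a touch cleaner than that of Theorem~\ref{thm:monoidalequivalence} precisely because here the weak equivalences are defined directly via $\Gamma_h$ without a fibrant replacement.
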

\begin{proof}
By Remark \ref{rmk:flatmodel} $d \hat{\acal}$ the technical conditions in the proof
of Proposition \ref{prop:smithmodel} hold for the category $d \hat{\acal}$.
The existence of the model structure then follows the same
method as Theorem \ref{thm:modelstructure}, except
that the weak equivalences are those
maps $f$ such that $\h_* (\Gamma_h f)$ is an isomorphism.
This is possible since
$\Gamma_h$ commutes with filtered colimits.
An analogous argument to Theorem \ref{thm:monoidalequivalence}
provides the Quillen equivalence.
\end{proof}

\begin{conjecture}\label{conj:torusmodel}
It should be possible to extend these results
to the case of the product of $r$ copies of $\torus$, $r >1$.
The algebraic model for
$\torus^r$--equivariant rational spectra
is defined in \cite{gre08torus}. In \cite{greshi} the algebraic model is
given an injective model structure where the cofibrations
are the monomorphisms and the weak equivalences are the homology isomorphisms.
This model structure is not monoidal for the
same reason that $d \acal_i$ is not. Thus if we want to study
the monoidal properties of
rational $\torus^r$--spectra we need
an analogue of the dualisable model structure.

The key steps to generalising this section to
$\torus^r$ are showing that one has the analogue of
wide spheres  (which form a set of
categorical generators) and algebraic spheres (which
generate the homotopy category).
We leave this for future work since
the algebraic model for rational $\torus^r$--spectra
is much more complicated to define and constructing these two
collections of spheres would be a very substantial task.
\end{conjecture}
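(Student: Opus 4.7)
The plan is to mimic the strategy of Sections \ref{sec:duals}, \ref{sec:dualisable} and \ref{sec:quillen}, but replacing each piece of structure with its $\torus^r$--analogue coming from \cite{gre08torus}. The overall architecture of the argument is: (i) identify a set of ``wide sphere'' type objects that gives $\acal(\torus^r)$ enough categorical generators to be locally presentable and to detect surjections; (ii) identify a characterisation of the dualisable objects (presumably: nub finitely generated and projective over the appropriate structure sheaf/ring of the diagram) and show the dualisables form a set up to isomorphism; (iii) invoke Smith's theorem as in Proposition \ref{prop:smithmodel} to build the dualisable model structure; (iv) run the Bousfield localisation argument of Section \ref{sec:quillen} using the (shifted) algebraic/representation spheres as cells.

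First I would set up the appropriate analogue of $R_\bullet \leftmod$. For $\torus^r$ the diagram indexing the algebraic model is no longer the three--object zig--zag $\ocal_\fcal \to \ecal^{-1} \ocal_\fcal \leftarrow \qq$ but a diagram indexed by chains of connected subgroups of $\torus^r$ (see \cite{gre08torus, greshi}). Limits and colimits in this larger diagram category are objectwise, so the analogue of the adjunction $(\inc, \Gamma)$ of Section \ref{sec:limits} should exist formally, with $\Gamma$ built as a composite of an ``inversion'' step and an iterated ``horizontal torsion'' step. Once this formal framework is in place, Proposition \ref{prop:smithmodel} generalises verbatim provided one has a set of generators $G_i$ in $\acal(\torus^r)$ with $\acal(\torus^r)(\oplus_i G_i, -)$ faithful and an exact, filtered--colimit preserving homology functor to a locally presentable abelian category.

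The core of the work is therefore step (i): constructing wide spheres for $\torus^r$. In the $\torus$ case a wide sphere is built from finitely many Euler classes together with a single ``mixed'' element $\sum_i \sigma_i \otimes t_i$ chosen to hit a prescribed element. For $\torus^r$ one must produce, given an element of the nub at any vertex of the diagram, a finitely generated cofibrant approximation lying underneath it. This is essentially a generalisation of Greenlees's \cite[Lemma 22.3.4]{gre99} to the diagram setting; the same element--chasing argument with Euler classes for $\torus^r$ (finite products of Euler classes for the factors, or more generally Euler classes associated to complex representations with $V^{\torus^r} = 0$) should succeed but the bookkeeping is substantially heavier. One must also show that these wide spheres are dualisable, which via the analogue of Proposition \ref{prop:dualfingenproj} reduces to showing their nubs are finitely generated projective modules over the relevant ring; the argument of \cite[Lemma 23.3.3]{gre99} should extend, although this is precisely the place where the more complicated structure sheaf for $\torus^r$ may force new case analysis.

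With wide spheres and a set $\pcal$ of representatives of dualisable objects in hand, Theorem \ref{thm:modelstructure} transplants directly: take generating cofibrations $i_n \otimes P$ for $P \in \pcal$, and use the surjection--by--wide--spheres argument to show that maps with the right lifting property are surjective, hence homology isomorphisms after the same cycle--lifting trick (the one point of care is that one must multiply by enough Euler classes to make chosen preimages cycles, which requires the nub to have no torsion for each relevant Euler class family in the $\torus^r$ setting). The pushout--product and monoid axioms in Theorem \ref{thm:monoidal} then go through formally. Finally, for the Quillen equivalence with $K \cell d R_\bullet \leftmod$ one uses the second family of spheres: the representation spheres $S^\nu$ for $\nu$ a function of finite support on the connected subgroups, together with \cite[Lemma 13.6]{greshi}, to show these generate the homotopy category as in Proposition \ref{prop:repgenerate}; the proof of Theorem \ref{thm:monoidalequivalence} then copies over. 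The main obstacle, as already signalled in the statement of the conjecture, is the combinatorial complexity of proving that both collections exist and have the required properties in $\acal(\torus^r)$; everything downstream is a straightforward but lengthy transcription.
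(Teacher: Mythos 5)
The statement is a conjecture, not a theorem; the paper stops at sketching what would be needed --- the analogues of wide spheres (as categorical generators) and of algebraic spheres (as homotopy-category generators) --- and deliberately leaves the construction for future work, so there is no proof in the paper to compare your argument against. Your proposal fleshes out exactly that sketch along the paper's own lines: the same two bottleneck steps, plus the same downstream machinery (a diagram-category analogue of $R_\bullet \leftmod$, Smith's theorem for existence of the dualisable model structure, right Bousfield localisation at the spheres). One caution: your remark that ``everything downstream is a straightforward but lengthy transcription'' is optimistic where the paper is deliberately not. For $\torus^r$ the nub is replaced by a whole diagram of modules over a diagram of rings indexed by flags of connected subgroups, and it is genuinely unclear whether the exactness of the torsion functor $\Gamma$, the characterisation of dualisables in the style of Proposition~\ref{prop:dualfingenproj}, or the projectivity of wide-sphere nubs (the analogue of \cite[Lemma 23.3.3]{gre99}) carry over without new ideas; those points, not the bookkeeping, are what make this a conjecture rather than a theorem, and to your credit you do flag the last of them.
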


\addcontentsline{toc}{part}{Bibliography}
\bibliography{bibdual}
\bibliographystyle{alpha}

\end{document}